\theoremstyle{plain}
\newtheorem{thm}{Theorem}[section]
\newtheorem{prop}[thm]{Proposition}
\newtheorem{defn}[thm]{Definition}
\newtheorem{lemma}[thm]{Lemma}
\newtheorem{assumption}[thm]{Assumption}
\theoremstyle{remark}
\newtheorem{remark}[thm]{Remark}
\newcommand{\eps}{{\varepsilon}} 
\newcommand{\e}{_{\varepsilon}} 
\newcommand{\R}{\mathbb{R}}
\newcommand{\N}{\mathbb{N}}
\newcommand{\cE}{{\mathcal{E}}}
\newcommand{\cB}{{\mathcal{B}}}
\newcommand{\sB}{{\mathscr{B}}}
\newcommand{\sD}{{\mathscr D}}
\newcommand{\sA}{{\mathscr A}}
\newcommand{\sM}{{\mathscr M}}
\newcommand{\tsq}[1]{{\stackrel{2}{\rightharpoonup}_{#1}}}
\newcommand{\ts}{{\stackrel{2}{\rightharpoonup}}}
\newcommand{\weakto}{\rightharpoonup}
\newcommand{\tomega}{\tilde{\omega}}
\newcommand{\CL}{{\mathscr{C\!P}}}
\newcommand{\pinv}{P_{{\mathrm{inv}}}} 
\newcommand{\ex}[1]{\left\langle {#1} \right\rangle} 
\newcommand{\unf}{\mathcal{T}_{\varepsilon}} 
\newcommand{\brac}[1]{\left({#1}\right) } 
\newcommand{\cb}[1]{\left\lbrace {#1} \right\rbrace}
\newcommand{\pot}{{\mathrm{pot}}}
\newcommand{\inv}{{\mathrm{inv}}}
\newcommand{\ns}[1]{\|{#1}\|_{L^p(\Omega\times Q)}} 
\newcommand{\ltp}{L^p(\Omega \times Q)} 
\newcommand{\ltq}{L^q(\Omega \times Q)}
\newcommand{\wt}{\overset{2}{\rightharpoonup}} 
\newcommand{\st}{\overset{2}{\rightarrow}} 
\newcommand{\re}[1]{\mathbb{R}^{#1}}
\newcommand{\sob}{W^{1,p}}
\title{Stochastic two-scale convergence and Young measures}
\author[1,2]{Martin Heida\thanks{martin.heida@tum.de, martin.heida@wias-berlin.de}} 
\author[3]{Stefan Neukamm\thanks{stefan.neukamm@tu-dresden.de}}
\author[3]{Mario Varga\thanks{mario.varga@tu-dresden.de}}
\affil[1]{Fakult{\"a}t f{\"u}r Mathematik, Technische Universit{\"a}t M{\"u}nchen}
\affil[2]{Weierstrass Institute for Applied Analysis and Stochastics, Berlin}
\affil[3]{Fakultät Mathematik, Technische Universit\"at Dresden}
\begin{document}
\maketitle
\begin{abstract}
In this paper we compare the notion of stochastic two-scale convergence in the mean (by Bourgeat, Mikeli{\'c} and Wright), the notion of stochastic unfolding (recently introduced by the authors), and the quenched notion of stochastic two-scale convergence (by Zhikov and Pyatnitskii). In particular, we introduce stochastic two-scale Young measures as a tool to compare mean and quenched limits. Moreover, we discuss two examples, which can be naturally analyzed via stochastic unfolding, but which cannot be treated via quenched stochastic two-scale convergence.
\medskip

\noindent
{\bf Keywords:} stochastic homogenization, unfolding, two-scale convergence, Young measures
\end{abstract}

\setlength{\parindent}{0pt}
\tableofcontents

\section{Introduction}\label{Intro}

In this paper we compare \textit{quenched} stochastic two-scale convergence \cite{Zhikov2006} with the notion of \textit{stochastic unfolding} \cite{neukamm2017stochastic,heida2019stochastic}, which is equivalent to stochastic two-scale convergence in the \textit{mean} \cite{bourgeat1994stochastic}. In particular, we introduce the concept of stochastic two-scale Young measures to relate quenched stochastic two-scale limits with the mean limit  and discuss examples of convex homogenization problems that can be treated with two-scale convergence in the mean, but not conveniently in the quenched setting of two-scale convergence.
\medskip

Two-scale convergence has been introduced in \cite{nguetseng1989general,allaire1992homogenization,lukkassen2002two} for homogenization problems (partial differential equations or variational problems) with periodic coefficients. The essence of two-scale convergence is that the two-scale limit of an oscillatory sequence captures oscillations that emerge along the sequence and that are to leading order periodic on a definite microscale, typically denoted by $\varepsilon>0$. It is especially well-suited for problems where oscillations of solutions solely stem from prescribed oscillations of the coefficients or the data. For instance, this is the case for equations with a \textit{monotone} structure or \textit{convex} variational problems. In contrast, problems that feature pattern formation to leading order (e.g., nonconvex variational problems or singular partial differential equations with non-convex domain) typically cannot be conveniently treated with two-scale convergence. Another well established method for periodic homogenization is \textit{periodic unfolding}, see \cite{cioranescu2002periodic,Visintin2006,mielke2007two,cioranescu2008periodic} as well as \cite{vogt1980, arbogast1990derivation} for the periodic modulation method, which is related. These methods build on an isometric operator---the periodic unfolding (or dilation) operator. It allows us to embed  oscillatory sequences into a larger two-scale space and to transform an oscillatory problem into an ``unfolded'' problem on the two-scale space. The latter often features a better separation of macro- and microscopic properties, which often is convenient for the analysis. We refer to \cite{griso2004error,cioranescu2004homogenization,neukamm2010homogenization,cioranescu2012periodic,hanke2011homogenization,liero2015homogenization,mielke2014two} for various interesting applications of this method. Both notions are closely linked, since weak convergence of ``unfolded'' sequence in the two-scale space is equivalent to weak two-scale convergence, see \cite{bourgeat1994rigorous}.
\medskip

In this paper we are interested in stochastic homogenization, i.e.~problems with random coefficients with a stationary distribution. The first stochastic homogenization result has been obtained by Papanicolaou and Varadhan in \cite{Papanicolaou1979} (and independently by Kozlov \cite{Kozlov1979}) for linear, elliptic equations with stationary and ergodic random coefficients on $\R^d$. In their seminal paper, Papanicolaou and Varadhan introduce a functional analytic framework, which, by now, is the standard way to model random coefficients. We briefly recall it in the special case of convex integral functionals with quadratic growth: Let $(\Omega,\mathcal F,P)$ denote a probability space of parameter fields $\omega\in\Omega$ and let $\tau_x:\Omega\to\Omega$, $x\in\R^d$, denote a measure  preserving and ergodic group action, see Assumption~\ref{Assumption_2_1} for details. A standard model for a convex, integral functional with a stationary, ergodic, random microstructure on scale $\varepsilon>0$ is then given by the functional $\mathcal{E}_{\varepsilon}^{\omega}: H^1(Q) \to \R\cup \cb{\infty}$,
\begin{equation*}
\mathcal{E}_{\varepsilon}^{\omega}(u) = \int_{Q}V\brac{\tau_{\frac{x}{\varepsilon}}\omega, \nabla u(x)}-f(x)u(x)\,dx
\end{equation*}
where $Q\subset\R^d$ denotes an open and bounded domain, $f\in L^2(Q)$, and $V(\omega,F)$ is an integrand that is measurable in $\omega\in\Omega$, convex in $F\in\R^d$, and satisfies a quadratic growth condition. A classical result \cite{DalMaso1986} shows that in the homogenization limit $\varepsilon\to 0$, the functionals $\Gamma$-converge to the homogenized functional $\mathcal{E}_{\hom}:H^1(Q) \to \R\cup \cb{\infty}$, given by
\begin{equation*}
\mathcal{E}_{\hom}(u) = \int_{Q}V_{\hom}(\nabla u(x))-f(x)u(x)\,dx,
\end{equation*}
where $V_{\hom}$ is a deterministic, convex integrand and characterized by a homogenization formula, see \eqref{equation} below.
There are different natural choices for the topology when passing to this limit:
\begin{itemize}
\item In the \textit{mean} setting, minimizers $u_\varepsilon^\omega$  of $\mathcal E_{\varepsilon}^\omega$, $\omega\in\Omega$, are viewed as random fields $(\omega,x)\mapsto u_\varepsilon^\omega(x)$ in $L^2(\Omega; H^{1}(Q))$ and one considers $\Gamma$-convergence of the averaged functional $L^2(\Omega;H^{1}(Q))\ni u\mapsto \int_\Omega\mathcal E_{\varepsilon}(u)\,dP$ w.r.t.~strong convergence in $L^2(\Omega\times Q)$.  In fact, the first result in stochastic homogenization  \cite{Papanicolaou1979} establishes convergence of solutions in this mean sense.
\item In the \textit{quenched} setting, one studies the limiting behavior of a minimizer $u_\varepsilon\in H^{1}(Q)$ of $\mathcal E_{\varepsilon}^\omega$ for fixed $\omega\in\Omega$. One then considers $\Gamma$-convergence of $\mathcal E_{\varepsilon}^\omega$ w.r.t.~strong convergence in $L^2(Q)$ for $\mathbb P$-a.a.~$\omega\in\Omega$.  
\end{itemize}
Similarly, two variants of stochastic two-scale convergence have been introduced as generalizations of periodic two-scale convergence (for the sake of brevity, we restrict the following review to the Hilbert-space case $p=2$, and note that the following extends to $L^p(\Omega \times Q)$ with $p\in (1,\infty)$):
\begin{itemize}
\item In \cite{bourgeat1994stochastic, andrews1998stochastic} the \textit{mean} variant has been introduced as follows: We say that a sequence of random fields $(u\e)\subset L^2(\Omega\times Q)$ \textit{stochastically two-scale converges in the mean} to $u\in L^2(\Omega\times Q)$, if
\begin{equation}\label{eq:136}
\lim_{\varepsilon\to 0}\int_{\Omega\times Q} u\e(\omega,x)  \varphi(\tau_{\frac{x}{\varepsilon}}\omega,x) \; dP(\omega)dx = \int_{\Omega\times Q}u(\omega,x) \varphi(\omega,x) \; dP(\omega)dx,
\end{equation}
for all \textit{admissible} test functions $\varphi \in L^2(\Omega\times Q)$, see Remark~\ref{remark:242} for details.
\item More recently, Zhikov and Pyatnitskii introduced in \cite{Zhikov2006} a \textit{quenched} variant: We say that a sequence $(u\e)\subset L^2(Q)$ \textit{quenched stochastically two-scale} converges to $u\in L^2(\Omega \times Q)$ w.r.t.~to a fixed parameter field $\omega_0 \in \Omega$, if
\begin{equation*}
\lim_{\varepsilon\to 0}\int_{Q}u\e(x) \varphi(\tau_{\frac{x}{\varepsilon}}\omega_0,x) \;dx= \int_{\Omega\times Q}u(\omega,x) \varphi(\omega,x) \; dP(\omega)dx, 
\end{equation*}
for all \textit{admissible} test functions $\varphi \in L^2(\Omega\times Q)$. Note that the two-scale limit $u$ a priori depends on $\omega_0$. In fact, in \cite{Zhikov2000} (see also \cite{heida2011extension}) quenched two-scale convergence has been introduced in a very general setting that includes the case of integration against random, rapidly oscillating measures, which naturally emerge when describing coefficients defined relative to random geometries. In this work, we restrict our considerations to the simplest case where the random measure is the Lebesgue measure.
\end{itemize}
Similarly to the periodic case, stochastic two-scale convergence in the mean can be rephrased with help of a transformation operator, see \cite{neukamm2017stochastic,heida2019stochastic,varga2019stochastic}, where the \textit{stochastic unfolding operator} $\unf :L^2(\Omega\times Q)\to L^2(\Omega\times Q)$,
\begin{equation}\label{eq:173:3}
\unf u(\omega,x)= u(\tau_{-\frac{x}{\varepsilon}}\omega,x),
\end{equation}
has been introduced. As in the periodic case, it is a linear isometry and it turns out that for a bounded sequence $(u_\varepsilon)\subseteq L^2(\Omega\times Q)$, stochastic two-scale convergence in the mean is equivalent to weak convergence of the unfolded sequence $\unf u_\varepsilon$. As we demonstrate below in Section~\ref{sec:nonerg}, the stochastic unfolding method leads to a very economic and streamlined analysis of convex homogenization problems. Moreover, it allows us to derive two-scale functionals of the form $\mathcal E(u,\chi)=\int_\Omega\int_QV(\omega,\nabla u(x)+\chi(\omega,x))\,dx\,dP$ as a $\Gamma$-limit of $\mathcal E_\varepsilon$, see Theorem~\ref{thm1} for details. In contrast to the periodic case, where the unfolding operator is an isometry from $L^2(\R^d)$ to $L^2(\mathcal{Y} \times \R^d)$ (with $\mathcal{Y}$ denoting the unit torus), in the random case it is not possible to interpret \eqref{eq:173:3} as a continuous operator from $L^2(Q)$ to $L^2(\Omega \times Q)$. Therefore, quenched two-scale convergence cannot be characterized via stochastic unfolding directly.
\smallskip

In the present paper we compare the different notions of stochastic two-scale convergence. Although the mean and quenched notion of two-scale convergence look quite similar, it is non-trivial to relate both. As a main result, we introduce stochastic two-scale Young measures as a tool to compare quenched and mean limits, see Theorem~\ref{thm:Balder-Thm-two-scale}. The construction invokes a metric characterization of quenched stochastic two-scale convergence, which is a tool of independent interest, see Lemma~\ref{L:metric-char}. As an application we demonstrate how to lift a mean two-scale homogenization result to a quenched statement, see Section~\ref{Section:4:3}. Moreover, we present two examples that can only be conveniently treated with the  mean notion of two-scale convergence.  In the first example, see Section \ref{sec:nonerg},  the assumption of ergodicity is dropped (as it is natural in the context of periodic representative volume approximation schemes). In the second example we consider a model that invokes a mean field interaction in form of a \textit{variance-type} regularization of a convex integral functional with degenerate growth, see Section \ref{sec:1012}.
\medskip

\textbf{Structure of the paper.} In the following section we present the standard setting for stochastic homogenization. In Section \ref{S_Stoch} we provide the main properties of the stochastic unfolding method, present the most important facts about quenched two-scale convergence and present our results about Young measures. In Section \ref{Section_Applications} we present examples of stochastic homogenization and applications of the methods developed in this paper.
\medskip

\section{Standard model of random coefficients} In the following we briefly recall the standard setting for stochastic homogenization. Throughout the entire paper we assume the following:
\begin{assumption} \label{Assumption_2_1}
Let $\brac{\Omega,\mathcal{F},P}$ be a complete and separable probability space. Let $\tau=\cb{\tau_x}_{x\in \R^{d}}$ denote a group of invertible measurable mappings $\tau_x:\Omega\to \Omega$ such that:
\begin{enumerate}[label=(\roman*)]
\item (Group property). $\tau_0=Id$ and $\tau_{x+y}=\tau_x\circ \tau_y$ for all $x,y\in \R^{d}$.
\item (Measure preservation). $P(\tau_{-x} E)=P(E)$ for all $E\in \mathcal{F}$ and $x\in \R^{d}$.
\item (Measurability). $(\omega,x)\mapsto \tau_{x}\omega$ is $\brac{\mathcal{F}\otimes \mathcal{L}(\R^d),\mathcal{F}}$-measurable, where  $\mathcal{L}(\R^d)$ denotes the Lebesgue $\sigma$-algebra.

\end{enumerate}
\end{assumption}
We write $\langle\cdot\rangle$ to denote the expectation $\int_\Omega\cdot\, dP$. By the separability assumption on the measure space it follows that $L^p(\Omega)$ is separable for $p\geq 1$. The proof of the following lemma is a direct consequence of Assumption \ref{Assumption_2_1}, thus we omit it. 

\begin{lemma}[Stationary extension]\label{L:stat}
  Let $\varphi:\Omega\to\R$ be $\mathcal F$-measurable. Let $Q\subset \R^d$ be open and denote by $\mathcal{L}(Q)$ the corresponding Lebesgue $\sigma$-algebra. Then $S\varphi:\Omega\times Q\to\R$, $S\varphi(\omega,x):=\varphi(\tau_x\omega)$ defines an $\mathcal F\otimes\mathcal L(Q)$-measurable function -- called the stationary extension of $\varphi$. Moreover, if $Q$ is bounded, for all $1\leq p<\infty$ the map $S:L^p(\Omega)\to L^p(\Omega\times Q)$ is a linear injection satisfying
  \begin{equation*}
    \|S\varphi\|_{L^p(\Omega\times Q)}=|Q|^\frac{1}{p}\|\varphi\|_{L^p(\Omega)}.
  \end{equation*}
\end{lemma}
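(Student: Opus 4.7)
The plan is to decompose the statement into three routine pieces: measurability, the norm identity, and injectivity. None of them is deep; the whole content is bookkeeping on top of Assumption~\ref{Assumption_2_1}.

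First, for measurability, I would observe that by Assumption~\ref{Assumption_2_1}(iii) the map $T:\Omega\times\R^d\to\Omega$, $T(\omega,x):=\tau_x\omega$, is $(\mathcal F\otimes\mathcal L(\R^d),\mathcal F)$-measurable. Since $\varphi$ is $\mathcal F$-measurable, the composition $S\varphi=\varphi\circ T$ is $\mathcal F\otimes\mathcal L(\R^d)$-measurable on $\Omega\times\R^d$, and its restriction to $\Omega\times Q$ is $\mathcal F\otimes\mathcal L(Q)$-measurable.

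Second, for the norm identity I would use Fubini together with measure preservation. For each fixed $x\in\R^d$, Assumption~\ref{Assumption_2_1}(ii) together with the group property gives that $\tau_x$ is a measure-preserving bijection on $(\Omega,\mathcal F,P)$, so
\begin{equation*}
\int_{\Omega}|\varphi(\tau_x\omega)|^p\,dP(\omega)=\int_{\Omega}|\varphi(\omega)|^p\,dP(\omega)=\|\varphi\|_{L^p(\Omega)}^p.
\end{equation*}
Since $(\omega,x)\mapsto |S\varphi(\omega,x)|^p$ is nonnegative and jointly measurable by Step~1, Tonelli yields
\begin{equation*}
\|S\varphi\|_{L^p(\Omega\times Q)}^p=\int_Q\int_{\Omega}|\varphi(\tau_x\omega)|^p\,dP(\omega)\,dx=|Q|\,\|\varphi\|_{L^p(\Omega)}^p,
\end{equation*}
which gives the claimed identity upon taking the $p$-th root. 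Linearity of $S$ is immediate from the definition, and injectivity follows from this identity: if $S\varphi=0$ in $L^p(\Omega\times Q)$ with $0<|Q|<\infty$, then $\|\varphi\|_{L^p(\Omega)}=0$ and hence $\varphi=0$ in $L^p(\Omega)$.

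The only step that deserves a second thought is the application of Fubini, which requires the joint measurability established in Step~1; without Assumption~\ref{Assumption_2_1}(iii), the inner integral would not even be known to depend measurably on $x$. Otherwise the lemma is a direct translation of the hypotheses, so I expect no genuine obstacle.
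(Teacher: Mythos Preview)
Your proof is correct and is precisely the routine verification the paper has in mind: the paper omits the proof entirely, stating only that it is ``a direct consequence of Assumption~\ref{Assumption_2_1}'', and your three steps (measurability via composition using (iii), the norm identity via Tonelli and measure preservation (ii), injectivity from the norm identity) are exactly that direct consequence.
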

We say $(\Omega,\mathcal F,P,\tau)$ is \textit{ergodic} ($\ex{\cdot}$ is ergodic), if
\begin{align*}
  \text{ every shift invariant }A\in \mathcal{F} \text{ (i.e.~}\tau_x A=A \text{ for all }x\in \re d)\text{ satisfies } P(A)\in \cb{0,1}. 
\end{align*}
In this case the celebrated Birkhoff's ergodic theorem applies, which we recall in the following form:
\begin{thm}[{Birkhoff's ergodic Theorem \cite[Theorem 10.2.II]{Daley1988}}]
\label{thm:ergodic-thm} Let $\ex{\cdot}$ be ergodic and  $\varphi:\Omega\to\R$ be integrable. Then for $P$-a.a.~$\omega\in\Omega$ it holds: $ S\varphi(\omega,\cdot)$ is locally integrable and for all open, bounded sets $Q\subset\R^d$ we have
\begin{equation}
\lim_{\eps\rightarrow0}\int_{Q}S\varphi(\omega,\tfrac{x}{\varepsilon})\,dx=|Q|\langle\varphi\rangle\,.\label{eq:ergodic-thm}
\end{equation}
Furthermore, if $\varphi\in L^p(\Omega)$ with $1\leq p\leq\infty$, then for $P$-a.a.~$\omega\in\Omega$ it holds: $S\varphi(\omega,\cdot)\in L_{loc}^{p}(\R^d)$, and provided $p<\infty$ it holds $S\varphi(\omega,\frac{\cdot}{\varepsilon})\weakto \langle\varphi\rangle$ weakly in $L_{loc}^{p}(\R^d)$ as $\eps\rightarrow0$.
\end{thm}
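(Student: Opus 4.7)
The statement is the multiparameter continuous-time Birkhoff ergodic theorem in the form one uses for stochastic homogenization; the cited result \cite{Daley1988} gives the core convergence for a single integrable test function and a single bounded open set, so the task is to upgrade it so that (a) the null set is independent of $Q$ and (b) the convergence is promoted to weak $L^p_{loc}$-convergence when $\varphi\in L^p(\Omega)$.

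First, I would fix a countable family $\mathcal{C}$ of ``nice'' bounded open sets---e.g.\ open cubes with rational centers and rational side lengths---and apply the cited ergodic theorem to $\varphi$ separately for each $C\in\mathcal{C}$. Taking the union of the corresponding $P$-null exceptional sets yields a single null set $N\subset\Omega$ off of which \eqref{eq:ergodic-thm} holds for every $C\in\mathcal C$ simultaneously. To extend \eqref{eq:ergodic-thm} from $\mathcal C$ to an arbitrary bounded open $Q$ I would decompose $Q$ into a countable disjoint union of elements of $\mathcal C$ up to a Lebesgue-null boundary, apply \eqref{eq:ergodic-thm} on each such element, and combine via dominated/monotone convergence applied to the ergodic averages of $|\varphi|\in L^1(\Omega)$.

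Second, for the $L^p_{loc}$ integrability claim I would apply Lemma~\ref{L:stat} and Tonelli's theorem: for every bounded open $Q$,
\begin{equation*}
\ex{\|S\varphi(\cdot,\cdot)\|_{L^p(Q)}^p}=|Q|\,\|\varphi\|_{L^p(\Omega)}^p<\infty,
\end{equation*}
so $S\varphi(\omega,\cdot)\in L^p(Q)$ for $P$-a.a.~$\omega$. Intersecting the corresponding null sets over a countable exhaustion of $\R^d$ by balls of integer radius gives $S\varphi(\omega,\cdot)\in L^p_{loc}(\R^d)$ for $P$-a.a.~$\omega$.

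Third, for the weak convergence when $p<\infty$: applying the already-established part of the theorem to $|\varphi|^p\in L^1(\Omega)$ gives
\begin{equation*}
\int_{Q}|S\varphi(\omega,\tfrac{x}{\varepsilon})|^p\,dx\to |Q|\,\ex{|\varphi|^p},
\end{equation*}
for each bounded open $Q$ (off a single null set, by the first step), so $\{S\varphi(\omega,\cdot/\varepsilon)\}_\varepsilon$ is bounded in $L^p(Q)$. Since simple functions supported in $Q$ are dense in $L^q(Q)$ ($q$ conjugate to $p$), weak convergence in $L^p(Q)$ follows once we verify $\int_A S\varphi(\omega,\tfrac{x}{\varepsilon})\,dx\to |A|\,\ex{\varphi}$ for every bounded open $A\subset Q$, which is exactly the first part of the theorem applied to $A$. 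Exhausting $\R^d$ by such $Q$ yields weak convergence in $L^p_{loc}(\R^d)$. The main technical nuisance is purely bookkeeping---keeping all the null sets countable so that a single $\omega$-set of full measure supports every required limit---and there is no deep obstacle beyond this, since the cited multiparameter Birkhoff theorem does all the ergodic-theoretic heavy lifting.
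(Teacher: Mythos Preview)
The paper does not prove this theorem at all; it is stated with a citation to \cite[Theorem~10.2.II]{Daley1988} and used as a black box. Your sketch is a correct and standard way to massage the cited result into the form stated here (uniformizing the exceptional set over a countable basis of cubes, then bootstrapping to weak $L^p_{loc}$ convergence via boundedness of the $L^p$-norms coming from applying the $L^1$-statement to $|\varphi|^p$), so there is nothing to compare against.
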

\textbf{Stochastic gradient.} For $p \in (1,\infty)$ consider the group of isometric operators $\cb{U_x:x\in \re{d}}$ on $L^p(\Omega)$ defined by $U_x \varphi(\omega)=\varphi(\tau_{x}\omega)$. This group is strongly continuous (see \cite[Section 7.1]{jikov2012homogenization}). For $i=1,...,d$, we consider the 1-parameter group of operators $\cb{U_{h e_i}:h\in \re{}}$ and its infinitesimal generator $D_i:\mathcal{D}_i\subset L^p(\Omega)\rightarrow L^p(\Omega)$
\begin{equation*}
D_i \varphi=\lim_{h\rightarrow 0} \frac{U_{he_i}\varphi-\varphi}{h},
\end{equation*}
which we refer to as \textit{stochastic derivative}. $D_i$ is a linear and closed operator and its domain $\mathcal{D}_i$ is dense in $L^p(\Omega)$. We set $W^{1,p}(\Omega)=\cap_{i=1}^{d}\mathcal{D}_i$ and define for $\varphi\in W^{1,p}(\Omega)$ the stochastic gradient as $D\varphi=(D_1 \varphi,...,D_d \varphi)$. In this way, we obtain  a linear, closed and densely defined operator $D:W^{1,p}(\Omega)\rightarrow L^p(\Omega)^d$, and we denote by
\begin{equation}\label{eq:262}
L^p_{\pot}(\Omega):=\overline{\mathcal R(D)}\subset L^p(\Omega)^d
\end{equation}
the closure of the range of $D$ in $L^p(\Omega)^d$. We denote the adjoint of $D$ by $D^*:\mathcal{D}^*\subset{L^q(\Omega)^d}\rightarrow L^q(\Omega)$ where here and below  $q:= \frac{p}{p-1}$ denotes the dual exponent. It is a linear, closed and densely defined operator ($\mathcal{D}^*$ is the domain of $D^*$). We define the subspace of shift invariant functions in $L^p(\Omega)$ by 
\begin{equation*}
L^p_{{\inv}}(\Omega)=\cb{\varphi\in L^p(\Omega):U_x \varphi=\varphi \quad \text{for all }x \in \re{d}},
\end{equation*}
and denote by $ P_{\inv}:L^p(\Omega) \to L^p_{\inv}(\Omega)$ the conditional expectation with respect to the $\sigma$-algebra of shift invariant sets $\cb{ A \in \mathcal{F} : \tau_x A = A \text{ for all } x\in \re d}$. $P_{\inv}$ a contractive projection and for $p=2$ it coincides with the orthogonal projection onto $L^2_{\inv}(\Omega)$. The following well-known equivalence holds:
\begin{equation*}
\text{$\ex{\cdot}$ is ergodic $\Leftrightarrow$ $L^p_{\inv}(\Omega)\simeq \re{}$ $\Leftrightarrow$ $\pinv f=\ex{f}$.} 
\end{equation*}

\textbf{Random fields.}  We introduce function spaces for functions defined on $\Omega\times Q$ as follows: For closed subspaces $X\subset L^p(\Omega)$ and $Y\subset L^p(Q)$, we denote by $X\otimes Y$ the closure of $$X\overset{a}{\otimes}Y:=\cb{\sum_{i=1}^{n}\varphi_i \eta_i:  \varphi_i \in X, \eta_i\in Y, n\in \mathbb{N}}$$ in $L^p(\Omega\times Q)$. Note that in the case $X=L^p(\Omega)$ and $Y=L^p(Q)$, we have $X\otimes Y = L^p(\Omega\times Q)$. Up to isometric isomorphisms, we may identify $L^p(\Omega\times Q)$ with the Bochner spaces $L^p(\Omega;L^p(Q))$ and $L^p(Q;L^p(\Omega))$. Slightly abusing the notation, for closed subspaces $X\subset L^p(\Omega)$ and $Y\subset W^{1,p}(Q)$, we denote by $X \otimes Y$ the closure of
$$X\overset{a}{\otimes}Y:=\cb{\sum_{i=1}^{n}\varphi_i \eta_i:  \varphi_i \in X, \eta_i\in Y, n\in \mathbb{N}}$$
in $L^p(\Omega;W^{1,p}(Q))$. In this regard, we may identify $u\in L^p(\Omega)\otimes W^{1,p}(Q)$ with the pair $(u,\nabla u)\in L^p(\Omega\times Q)^{1+d}$. We mostly focus on the space $L^p(\Omega\times Q)$ and the above notation is convenient for keeping track of its various subspaces.

\section{Stochastic two-scale convergence, unfolding and Young measures}\label{S_Stoch}

In the following we first discuss two notions of stochastic two-scale convergence and their connection through Young measures. In particular, Section \ref{S_Stoch_1} is devoted to the introduction of the stochastic unfolding operator and its most important properties. In Section \ref{section:312} we discuss quenched two-scale convergence and its properties. Section \ref{Section_4} presents the results about Young measures.

\subsection{Stochastic unfolding and two-scale convergence in the mean}\label{S_Stoch_1}
In the following we briefly introduce the stochastic unfolding operator and provide its main properties, for the proofs and detailed studies we refer to \cite{neukamm2017stochastic,heida2019stochastic,varga2019stochastic,NeukammVargaWaurick}.
\begin{lemma}[{\cite[Lemma 3.1]{heida2019stochastic}}]\label{L:unf}
  Let $\varepsilon>0$, $1<p<\infty$, $q=\frac{p}{p-1}$, and $Q\subset\R^d$ be open. There exists a unique linear isometric isomorphism
  \begin{equation*}
    \unf: \ltp\rightarrow \ltp
  \end{equation*}
  such that 
  \begin{equation*}
    \forall u\in L^p(\Omega)\overset{a}{\otimes} L^p(Q)\,:\qquad (\unf u)(\omega,x)=u(\tau_{-\frac{x}{\varepsilon}}\omega,x)\qquad \text{a.e. in }\Omega\times Q.
  \end{equation*}
  Moreover, its adjoint is the unique linear isometric isomorphism $\unf^{*}:\ltq\to\ltq$ that satisfies $(\unf^{*}u)(\omega,x)=u(\tau_{\frac{x}{\varepsilon}}\omega,x)$ a.e.~in $\Omega\times Q$ for all $u\in L^q(\Omega)\overset{a}{\otimes}L^q(Q)$, $q:= \frac{p}{p-1}$.
\end{lemma}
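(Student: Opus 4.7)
The plan is to build $\unf$ by the standard ``define on a dense subspace, then extend by continuity'' strategy. I would first fix the candidate formula on the algebraic tensor product $L^p(\Omega)\overset{a}{\otimes}L^p(Q)$, verify it gives a bona fide linear isometry there, and then extend uniquely to $\ltp$ by density. The inverse will come from the same recipe with $\tau_{x/\varepsilon}$ in place of $\tau_{-x/\varepsilon}$, and the adjoint will be read off from a short duality calculation. The only non-routine input is the fiberwise use of measure preservation combined with Fubini.

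Concretely, for $u=\sum_{i=1}^{n}\varphi_i\eta_i$ with $\varphi_i\in L^p(\Omega)$, $\eta_i\in L^p(Q)$, set $Tu(\omega,x):=\sum_i\varphi_i(\tau_{-x/\varepsilon}\omega)\eta_i(x)$. Joint $\mathcal{F}\otimes\mathcal{L}(Q)$-measurability is immediate from Assumption~\ref{Assumption_2_1}(iii), which makes $(\omega,x)\mapsto\tau_{-x/\varepsilon}\omega$ measurable. Writing the fiber $f_x(\omega):=u(\omega,x)=\sum_i\eta_i(x)\varphi_i(\omega)\in L^p(\Omega)$, one has $Tu(\omega,x)=f_x(\tau_{-x/\varepsilon}\omega)$, and the measure preservation in Assumption~\ref{Assumption_2_1}(ii) yields, for a.e.\ $x\in Q$,
\begin{equation*}
\int_{\Omega}|Tu(\omega,x)|^p\,dP(\omega)=\int_{\Omega}|f_x(\tau_{-x/\varepsilon}\omega)|^p\,dP(\omega)=\int_{\Omega}|f_x(\omega)|^p\,dP(\omega)=\int_{\Omega}|u(\omega,x)|^p\,dP(\omega).
\end{equation*}
Integrating in $x$ and invoking Fubini gives $\|Tu\|_{\ltp}=\|u\|_{\ltp}$, which simultaneously proves the isometry and well-definedness on $L^p$-equivalence classes (two representations of the same $u$ differ by something of zero norm, so their images under $T$ coincide). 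Linearity is immediate, so $T$ is a linear isometry on the dense subspace; it extends uniquely to a linear isometry $\unf:\ltp\to\ltp$ by density and continuity.

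For invertibility, the same construction with $\tau_{x/\varepsilon}$ replacing $\tau_{-x/\varepsilon}$ produces a linear isometry $R$ on the algebraic tensor product, and the group property $\tau_{y}\circ\tau_{-y}=\mathrm{id}$ from Assumption~\ref{Assumption_2_1}(i) shows $T\circ R=R\circ T=\mathrm{id}$ there; continuity transports this to the extensions, so $\unf$ is an isometric isomorphism. For the adjoint, I would test against $v\in L^q(\Omega)\overset{a}{\otimes}L^q(Q)$ on the dense subspace and reuse the fiberwise change of variables:
\begin{equation*}
\int_{\Omega\times Q}(\unf u)(\omega,x)\,v(\omega,x)\,dP\,dx=\int_{\Omega\times Q}u(\omega,x)\,v(\tau_{x/\varepsilon}\omega,x)\,dP\,dx,
\end{equation*}
which identifies $\unf^{*}v(\omega,x)=v(\tau_{x/\varepsilon}\omega,x)$ on a dense subset and hence everywhere by continuity; since $v\mapsto v(\tau_{\cdot/\varepsilon}\cdot,\cdot)$ is itself a linear isometry on $\ltq$ by the analogous computation, $\unf^{*}$ is an isometric isomorphism of $\ltq$. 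The one step that deserves attention is the fiberwise application of measure preservation together with Fubini; once that is in place, everything else reduces to density and continuity.
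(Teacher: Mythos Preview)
The paper does not give its own proof of this lemma; it simply cites \cite[Lemma~3.1]{heida2019stochastic} and refers the reader there for details. So there is nothing in the present paper to compare against.

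That said, your argument is correct and is essentially the standard one: define the operator on the dense algebraic tensor product, use measure preservation fiberwise plus Fubini to get the isometry (which also handles well-definedness), extend by continuity, and read off the inverse and adjoint from the group property and a duality pairing. One minor point worth making explicit is that the representatives $\varphi_i\in L^p(\Omega)$ are themselves equivalence classes, so you should note that changing $\varphi_i$ on a $P$-null set changes $\varphi_i(\tau_{-x/\varepsilon}\omega)$ only on a $(P\otimes\mathrm{Leb})$-null set---this follows from measure preservation and Fubini and is implicit in your fiberwise computation, but it is the step that justifies why the formula is meaningful on equivalence classes before you even get to the isometry. Otherwise the proposal is complete.
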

\begin{defn}[Unfolding and two-scale convergence in the mean]\label{def46}
  The operator $\unf:\ltp\to\ltp$ in Lemma~\ref{L:unf} is called the stochastic unfolding operator. We say that a sequence $(u\e) \subset \ltp$ weakly (strongly) two-scale converges in the mean in $\ltp$ to $u\in \ltp$ if (as $\varepsilon\to 0$)
  \begin{equation*}
    \unf u\e \rightarrow u \quad \text{ weakly (strongly) in }\ltp.
  \end{equation*} 
  In this case we write $u\e\wt u$ ($u_{\varepsilon} \st u$) in $L^p(\Omega\times Q)$.
\end{defn}

\begin{remark}[Equivalence to stochastic two-scale convergence in the mean]\label{remark:242}
Stochastic two-scale convergence in the mean was introduced in \cite{bourgeat1994stochastic}. In particular, it is said that a sequence of random fields $u\e \in L^p(\Omega\times Q)$ stochastically two-scale converges in the mean if
\begin{equation}\label{eq:1}
\lim_{\varepsilon\rightarrow 0}\ex{\int_{Q}u_{\varepsilon}(\omega,x)\varphi(\tau_{\frac{x}{\varepsilon}}\omega,x)dx}=\ex{ \int_{Q}u(\omega,x)\varphi(\omega,x)dx},
\end{equation}
for any $\varphi\in L^q(\Omega\times Q)$,  $q= \frac{p}{p-1}$, that is admissible, i.e., in the sense that the transformation $(\omega,x)\mapsto \varphi(\tau_{\frac{x}{\varepsilon}}\omega,x)$ is well-defined. For a bounded sequence $u\e \in L^p(\Omega\times Q)$, \eqref{eq:1} is equivalent to $\unf u\e \rightharpoonup u$ weakly in $L^p(\Omega\times Q)$, i.e., to weak stochastic two-scale convergence in the mean. Indeed, with help of $\unf$ (and its adjoint) we might rephrase the integral on the left-hand side in \eqref{eq:1} as
\begin{equation}\label{eq:1234}
\ex{\int_{Q}u_{\varepsilon}(\unf^{*}\varphi)\, dx}=\ex{\int_{Q}(\unf u_{\varepsilon})\varphi dx},
\end{equation}
which proves the equivalence.
\end{remark}
We summarize some of the main properties:
\begin{prop}[Main properties]\label{prop:292}
Let $p\in (1,\infty)$, $q=\frac{p}{p-1}$ and $Q\subset \R^d$ be open.
\begin{enumerate}[label = (\roman*)]
\item (Compactness, \cite[Lemma 3.4]{heida2019stochastic}.) If $\limsup_{\varepsilon\rightarrow 0}\ns{u\e}<\infty$, then there exists a subsequence $\varepsilon'$ and $u\in \ltp$ such that $u_{\varepsilon'} \wt u$ in $\ltp$.

\item (Limits of gradients, \cite[Proposition 3.7]{heida2019stochastic}) Let $(u\e)$ be a bounded sequence in $L^p(\Omega)\otimes \sob(Q)$. Then, there exist $u\in L^p_{{\inv}}(\Omega)\otimes \sob(Q)$ and $\chi\in L^p_{\pot}(\Omega)\otimes L^p(Q)$ such that (up to a subsequence)
\begin{equation}\label{equation1}
u\e \wt u  \text{ in }\ltp, \quad \nabla u\e \wt \nabla u +\chi  \text{ in }\ltp^d.
\end{equation}
If, additionally, $\ex{\cdot}$ is ergodic, then $u=P_{\mathsf{{\inv}}} u=\ex{u} \in \sob(Q)$ and $\ex{u_{\varepsilon}}\weakto u$ weakly in $\sob(Q)$.

\item (Recovery sequences, \cite[Lemma 4.3]{heida2019stochastic}) Let $u\in L^p_{{\inv}}(\Omega)\otimes \sob(Q)$ and $\chi\in L^p_{\pot}(\Omega)\otimes L^p(Q)$. There exists $u\e \in L^p(\Omega)\otimes W^{1,p}(Q)$ such that
\begin{equation*}
u\e \overset{2}{\to} u, \quad \nabla u\e \overset{2}{\to}\nabla u + \chi \quad \text{in }L^p(\Omega\times Q).
\end{equation*}
If additionally $u \in L^p_{\mathrm{inv}}(\Omega)\otimes W^{1,p}_0(Q)$, we have $u\e \in L^p(\Omega)\otimes W^{1,p}_0(Q)$.
\end{enumerate}
\end{prop}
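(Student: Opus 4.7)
The plan is to treat the three parts in order, using the isometry of $\unf$ from Lemma~\ref{L:unf} and a single commutation identity between $\unf$ and the spatial gradient.

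Part~(i) is immediate: since $\unf:\ltp\to\ltp$ is a linear isometry onto a reflexive space, boundedness of $(u_\varepsilon)$ passes to $(\unf u_\varepsilon)$, which therefore admits a weakly convergent subsequence $\unf u_{\varepsilon'}\weakto u$ in $\ltp$; this is by definition $u_{\varepsilon'}\wt u$.

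For part~(ii), I would first extract, via (i), subsequences along which $u_\varepsilon \wt u$ in $\ltp$ and $\nabla u_\varepsilon \wt v$ in $\ltp^d$. The core tool is the commutation identity
\begin{equation*}
\varepsilon\,\nabla(\unf u_\varepsilon)+D(\unf u_\varepsilon)=\varepsilon\,\unf(\nabla u_\varepsilon),
\end{equation*}
which one verifies on simple tensors $\varphi\otimes\eta\in W^{1,p}(\Omega)\overset{a}{\otimes}W^{1,p}(Q)$ by direct differentiation of $\varphi(\tau_{-x/\varepsilon}\omega)\eta(x)$, then extends by density and closedness of $D$. Testing this identity against $\eta(x)\psi(\omega)$ with $\eta\in C_c^\infty(Q)$ and $\psi\in\mathcal{D}^*$ and sending $\varepsilon\to 0$ (the right-hand side vanishes because $\unf(\nabla u_\varepsilon)$ is bounded) yields $\ex{\int_Q u(\omega,x)\,D^*\psi(\omega)\,\eta(x)\,dx}=0$, so $u(\cdot,x)$ lies in the annihilator of $\mathrm{Range}(D^*)$, which by $D^{**}=D$ equals $\mathrm{Null}(D)=L^p_\inv(\Omega)$; hence $u(\cdot,x)\in L^p_\inv(\Omega)$ for a.e.~$x$. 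To pin down $v$, I would test the two-scale limit of $\nabla u_\varepsilon$ against smooth vector fields $\xi(x)\zeta(\omega)$ with $\zeta$ in the annihilator of $L^q_\pot(\Omega)$, and use integration by parts in $x$ to read off $v-\nabla u=:\chi\in L^p_\pot(\Omega)\otimes L^p(Q)$, where $\nabla u$ is simultaneously identified in $L^p(\Omega\times Q)^d$, upgrading $u$ to $L^p_\inv(\Omega)\otimes\sob(Q)$. In the ergodic case, the invariant subspace collapses to constants, so $u$ is $\omega$-independent, and $\ex{u_\varepsilon}\weakto u$ in $\sob(Q)$ follows by averaging the two-scale convergence against $\omega$-independent test fields.

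For part~(iii), I would treat the $u$-component and $\chi$-component separately. Since $u\in L^p_\inv(\Omega)\otimes\sob(Q)$ is shift-invariant in $\omega$, one has $\unf u=u$ and $\unf(\nabla u)=\nabla u$, so the constant sequence $u_\varepsilon\equiv u$ already recovers $(u,\nabla u)$ strongly. For simple tensors $\chi(\omega,x)=D\psi(\omega)\eta(x)$ with $\psi\in W^{1,p}(\Omega)$ and $\eta\in C_c^\infty(Q)$, set $u_\varepsilon:=u+\varepsilon\,\psi(\tau_{x/\varepsilon}\omega)\eta(x)$; a direct computation gives $\unf u_\varepsilon=u+\varepsilon\psi(\omega)\eta(x)\to u$ and $\unf(\nabla u_\varepsilon)=\nabla u+D\psi(\omega)\eta(x)+\varepsilon\psi(\omega)\nabla\eta(x)\to\nabla u+\chi$ strongly in $\ltp$. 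Density of such tensors in $L^p_\pot(\Omega)\otimes L^p(Q)$ combined with a diagonal (Attouch-type) extraction extends the construction to arbitrary $\chi$, and the corrector's compact support in $Q$ preserves the boundary condition. The most delicate point I anticipate is the identification $v=\nabla u+\chi$ in (ii): it rests on a Helmholtz-type duality on the probability space and requires care especially in the non-ergodic case, where invariant fields are nontrivial and one must separately recognize the macroscopic gradient $\nabla u$ inside the two-scale limit $v$.
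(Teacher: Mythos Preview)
The paper does not actually give a proof of this proposition: each item is stated with a citation to \cite{heida2019stochastic} (Lemma~3.4, Proposition~3.7, Lemma~4.3) and no argument is reproduced. Your sketch is essentially the standard proof that appears in that reference, so there is nothing to ``compare'' in the strict sense; below I just flag where your write-up would need tightening before it counts as a proof.

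Part~(i) is fine. In part~(ii), the commutation identity $\varepsilon\nabla(\unf u_\varepsilon)+D(\unf u_\varepsilon)=\varepsilon\,\unf(\nabla u_\varepsilon)$ is the right engine, but as written it presupposes $\unf u_\varepsilon\in W^{1,p}(\Omega)$ in the $\omega$-variable, which is not available for general $u_\varepsilon\in L^p(\Omega)\otimes W^{1,p}(Q)$. You should state and use the identity directly in its tested (weak) form, transferring both derivatives to the test function $\eta\otimes\psi$ from the outset, rather than ``verifying on smooth tensors and extending by density.'' A second point: to show $u\in L^p_{\inv}(\Omega)\otimes W^{1,p}(Q)$ (not just $u(\cdot,x)\in L^p_{\inv}(\Omega)$ for a.e.~$x$) it is cleanest to observe that $P_{\inv}$ commutes with $\unf$ and with $\nabla_x$, so that $P_{\inv}u_\varepsilon$ is bounded in $L^p_{\inv}(\Omega)\otimes W^{1,p}(Q)$ and converges weakly there to $u$; your ``simultaneously identified'' phrasing hides this step. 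Finally, a small slip: you want $\zeta$ in the annihilator of $L^p_{\pot}(\Omega)$ (i.e.\ $\zeta\in\mathrm{Null}(D^*)\subset L^q(\Omega)^d$), not of $L^q_{\pot}(\Omega)$.

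In part~(iii) your corrector ansatz is the standard one; just note that for $\psi\in W^{1,p}(\Omega)$ one needs the (standard but nontrivial) fact that $x\mapsto\psi(\tau_{x/\varepsilon}\omega)$ lies in $W^{1,p}_{\mathrm{loc}}(\R^d)$ for $P$-a.e.~$\omega$ with spatial gradient $\tfrac1\varepsilon(D\psi)(\tau_{x/\varepsilon}\omega)$, so that $u_\varepsilon$ genuinely sits in $L^p(\Omega)\otimes W^{1,p}(Q)$. With these points addressed, your argument matches the one in the cited source.
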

\def\per{{\sf per}}
\subsection{Quenched two-scale convergence}\label{section:312}

In this section, we recall the concept of quenched stochastic two-scale convergence (see~\cite{Zhikov2006,heida2011extension}). The notion of quenched stochastic two-scale convergence is based on the individual ergodic theorem, see Theorem~\ref{thm:ergodic-thm}. We thus assume throughout this section that 
\begin{equation*}
\text{$\ex{\cdot}$ is ergodic.}
\end{equation*}
Moreover, throughout this section we fix exponents $p\in(1,\infty)$, $q:=\frac{p}{p-1}$, and an open and bounded domain $Q\subset\R^d$. We denote by $(\sB^p, \|\cdot\|_{\sB^p})$ the Banach space $L^p(\Omega\times Q)$ and the associated norm, and we write $(\sB^p)^*$ for the dual space. For the definition of quenched two-scale convergence we need to specify a suitable space of test-functions in $\sB^q$ that is countably generated. To that end we fix sets $\sD_\Omega$ and $\sD_Q$ such that
\begin{itemize}
\item $\sD_\Omega$ is a countable set of bounded, measurable functions on $(\Omega,\mathcal F)$ that contains the identity $\mathbf 1_{\Omega}\equiv 1$ and is dense in $L^1(\Omega)$ (and thus in $L^r(\Omega)$ for any $1\leq r<\infty$).
\item $\sD_Q\subset C(\overline Q)$ is a countable set that contains the identity $\mathbf 1_Q\equiv 1$ and is dense in $L^1(Q)$ (and thus in $L^r(Q)$ for any $1\leq r<\infty$).
\end{itemize}
We denote by 
$$\sA:=\{\varphi(\omega,x)=\varphi_\Omega(\omega)\varphi_Q(x)\,:\,\varphi_\Omega\in\sD_\Omega,\varphi_Q\in\sD_Q\}$$ 

the set of simple tensor products (a countable set), and by $\sD_0$ the $\mathbb Q$-linear span of $\sA$, i.e.~
\begin{equation*}
  \sD_0:=\big\{\,\sum_{j=1}^m\lambda_j\varphi_j\,:\,m\in\N,\,\lambda_1,\ldots,\lambda_m\in\mathbb Q,\,\varphi_1,\ldots,\varphi_m\in\sA\,\big\}.
\end{equation*}
We finally set 
$$\sD:=\mbox{span}\sA=\mbox{span}\sD_0\quad \text{and}\quad \overline{\sD}:=\mbox{span}(\sD_Q)$$
 (the span of $\sD_Q$ seen as a subspace of $\sD$), and note that $\sD$ and $\sD_0$ are dense subsets of $\sB^q$, while the closure of $\overline{\sD}$ in $\sB^q$ is isometrically isomorphic to $L^q(Q)$. Let us anticipate that $\sD$ serves as our space of test-functions for stochastic two-scale convergence. As opposed to two-scale convergence in the mean, ``quenched'' stochastic two-scale convergence is defined relative to a fixed ``admissible'' realization $\omega_0\in\Omega$. Throughout this section we denote by 
$$\Omega_0 \quad \text{the set of admissible realizations;}$$ it is a set of full measure determined by the following lemma:
\begin{lemma}\label{L:admis}
  There exists a measurable set $\Omega_0\subset\Omega$ with $P(\Omega_0)=1$ s.t. for all $\varphi,\varphi'\in\sA $, all $\omega_0\in\Omega_0$, and $r\in\{p,q\}$ we have with $(\unf^*\varphi)(\omega,x):=\varphi(\tau_{\frac{x}{\eps}}\omega,x)$,
  \begin{align*}
    \limsup\limits_{\eps\to 0}\|(\unf^*\varphi)(\omega_0,\cdot)\|_{L^r(Q)}&\leq \|\varphi\|_{\sB^r}\\ \text{and}\qquad \lim\limits_{\eps\to 0}\int_Q\unf^*(\varphi\varphi')(\omega_0,x)dx&=\ex{\int_Q(\varphi\varphi')(\omega_0,x)\,dx}.
  \end{align*}
\end{lemma}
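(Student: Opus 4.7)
The plan is to build $\Omega_0$ as a single countable intersection of full-measure sets produced by Birkhoff's ergodic theorem (Theorem~\ref{thm:ergodic-thm}) applied to a suitable countable family of bounded measurable functions on $\Omega$, and then to verify both claims by direct computation exploiting the tensor-product structure of elements of $\sA$.

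First, every $\varphi\in\sA$ factorizes as $\varphi(\omega,x)=\varphi_\Omega(\omega)\varphi_Q(x)$ with $\varphi_\Omega\in\sD_\Omega$ bounded and measurable and $\varphi_Q\in\sD_Q\subset C(\overline Q)$. Define the countable family
\[
\Psi:=\{\,|\varphi_\Omega|^r\,:\,\varphi_\Omega\in\sD_\Omega,\ r\in\{p,q\}\,\}\;\cup\;\{\,\varphi_\Omega\,\varphi'_\Omega\,:\,\varphi_\Omega,\varphi'_\Omega\in\sD_\Omega\,\},
\]
which consists of bounded $\mathcal F$-measurable functions; countability follows from that of $\sD_\Omega$. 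For each $\psi\in\Psi$, Theorem~\ref{thm:ergodic-thm} (applied with $p=1$, using boundedness of $\psi$) yields a set $\Omega_\psi\subset\Omega$ of full measure on which $S\psi(\omega_0,\tfrac{\cdot}{\eps})\weakto\ex{\psi}$ weakly in $L^1_{\mathrm{loc}}(\R^d)$. Set $\Omega_0:=\bigcap_{\psi\in\Psi}\Omega_\psi$; this remains of full measure.

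Now fix $\omega_0\in\Omega_0$, elements $\varphi=\varphi_\Omega\otimes\varphi_Q$, $\varphi'=\varphi'_\Omega\otimes\varphi'_Q$ of $\sA$, and $r\in\{p,q\}$. Since $Q$ is bounded and $\varphi_Q,\varphi'_Q\in C(\overline Q)$, both $|\varphi_Q|^r$ and $\varphi_Q\varphi'_Q$ extended by zero belong to $L^\infty_c(\R^d)$, so testing the above weak convergence against them gives
\begin{align*}
\|(\unf^*\varphi)(\omega_0,\cdot)\|_{L^r(Q)}^r
&=\int_Q S(|\varphi_\Omega|^r)\!\bigl(\omega_0,\tfrac{x}{\eps}\bigr)\,|\varphi_Q(x)|^r\,dx\;\xrightarrow[\eps\to0]{}\;\ex{|\varphi_\Omega|^r}\int_Q|\varphi_Q|^r\,dx=\|\varphi\|_{\sB^r}^r,\\
\int_Q\unf^*(\varphi\varphi')(\omega_0,x)\,dx
&=\int_Q S(\varphi_\Omega\varphi'_\Omega)\!\bigl(\omega_0,\tfrac{x}{\eps}\bigr)\,(\varphi_Q\varphi'_Q)(x)\,dx\;\xrightarrow[\eps\to0]{}\;\ex{\varphi_\Omega\varphi'_\Omega}\int_Q\varphi_Q\varphi'_Q\,dx.
\end{align*}
The first limit gives the first assertion (in fact with equality, which is stronger than the $\limsup\leq$ stated), and the second equals $\ex{\int_Q\varphi\varphi'\,dx}$ by Fubini and the tensor-product structure, which is the second assertion.

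The argument has essentially no serious obstacle: it is a standard countable-exceptional-set construction. The only point requiring attention is the bookkeeping, namely choosing $\Psi$ large enough so that one single set $\Omega_0$ handles \emph{every} pair $\varphi,\varphi'\in\sA$ and \emph{both} exponents $r\in\{p,q\}$ simultaneously; countability of $\sD_\Omega$ makes this a non-issue, and boundedness of elements of $\sD_\Omega$ ensures each $\psi\in\Psi$ lies in $L^1(\Omega)$ as required for Birkhoff.
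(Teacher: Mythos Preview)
Your argument is correct and follows exactly the route the paper intends: the paper's own proof is the single sentence ``This is a simple consequence of Theorem~\ref{thm:ergodic-thm} and the fact that $\sA$ is countable,'' and you have simply unpacked that sentence by writing down the countable family $\Psi$ explicitly and verifying the two displayed limits via the tensor-product factorization. Your observation that the first claim actually holds with equality (not just $\limsup\le$) is correct and worth noting.
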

\begin{proof}
  This is a simple consequence of Theorem~\ref{thm:ergodic-thm} and the fact that $\sA$ is countable.
\end{proof}
For the rest of the section $\Omega_0$ is fixed according to Lemma~\ref{L:admis}.
\medskip

The idea of quenched stochastic two-scale convergence is similar to periodic two-scale convergence: We associate with a bounded sequence $(u\e)\subset L^p(Q)$ and $\omega_0\in\Omega_0$, a sequence of linear functionals $(U\e)$ defined on $\sD$. We can pass (up to a subsequence) to a pointwise limit $U$, which is again a linear functional on $\sD$ and which (thanks to Lemma~\ref{L:admis}) can be uniquely extended to a bounded linear functional on $\sB^q$. We then define the \textit{weak quenched $\omega_0$-two-scale limit} of $(u\e)$ as the Riesz-representation $u\in \sB^p$ of $U\in(\sB^q)^*$.
\begin{defn}[quenched two-scale limit, cf.~\cite{Zhikov2006,Heida2017b}]
\label{def:two-scale-conv}
Let $(u\e)$ be a sequence in $L^{p}(Q)$, and let $\omega_0\in\Omega_0$ be fixed.  We say that $u\e$ converges (weakly, quenched) $\omega_0$-two-scale to $u\in \sB^{p}$, and write
$u\e\tsq{\omega_0}u$, if the sequence $u\e$ is bounded in $L^p(Q)$, and for all $\varphi\in \sD$ we have
\begin{equation}
  \lim_{\eps\to0}\int_{Q}u\e(x)(\unf^*\varphi)(\omega_0,x)\,dx=\int_\Omega\int_{Q}u(x,\omega)\varphi(\omega,x)\,dx\,dP(\omega).\label{eq:def-quenched-two-scale}
\end{equation}
\end{defn}
\begin{lemma}[Compactness]\label{lem:two-scale-limit}
Let $(u\e)$ be a bounded sequence in $L^p(Q)$ and $\omega_0\in \Omega_0$. Then there exists a subsequence (still denoted by $\eps$) and $u\in \sB^p$ such that $u_{\eps}\tsq{\omega_0}u$ and
    \begin{equation}
      \| u\|_{\sB^{p}}\leq \liminf_{\eps\to0}\|u_{\eps}\|_{L^{p}(Q)},\,\label{eq:two-scale-limit-estimate}
    \end{equation}
    and $u_{\eps}\weakto \ex{u}$ weakly in $L^p(Q)$.
\end{lemma}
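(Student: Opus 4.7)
The plan is to realize quenched two-scale convergence as weak-$\ast$ convergence of associated linear functionals on the separable test space $\sD$, extract a weak-$\ast$ limit by a diagonal argument, and then apply the Riesz representation theorem on $\sB^q$ to produce the two-scale limit $u$.

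\textbf{Step 1 (Functionals and uniform bound).} For each $\varepsilon>0$ define $U_\varepsilon\in\sD^\ast$ by
\begin{equation*}
U_\varepsilon(\varphi):=\int_Q u_\varepsilon(x)(\unf^\ast\varphi)(\omega_0,x)\,dx\qquad(\varphi\in\sD).
\end{equation*}
Set $C:=\sup_\varepsilon\|u_\varepsilon\|_{L^p(Q)}<\infty$. By Hölder's inequality and the first conclusion of Lemma~\ref{L:admis}, for every $\varphi\in\sA$,
\begin{equation*}
\limsup_{\varepsilon\to 0}|U_\varepsilon(\varphi)|\leq C\,\limsup_{\varepsilon\to 0}\|(\unf^\ast\varphi)(\omega_0,\cdot)\|_{L^q(Q)}\leq C\,\|\varphi\|_{\sB^q}.
\end{equation*}
By $\mathbb Q$-linearity this bound extends to every $\varphi\in\sD_0$.

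\textbf{Step 2 (Diagonal extraction).} Since $\sD_0$ is countable and the sequence $(U_\varepsilon(\varphi))_\varepsilon$ is bounded for each $\varphi\in\sD_0$, a standard diagonal argument produces a subsequence (still indexed by $\varepsilon$) and a map $U:\sD_0\to\R$ such that $U_\varepsilon(\varphi)\to U(\varphi)$ for every $\varphi\in\sD_0$. The convergence and $\mathbb Q$-linearity of $U$ give $|U(\varphi)|\leq C\|\varphi\|_{\sB^q}$ on $\sD_0$, and by $\mathbb Q$-linearity plus this continuity estimate, $U$ extends uniquely to an $\R$-linear map on $\sD=\mathrm{span}\,\sA$ with the same bound. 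Since $\sD$ is dense in $\sB^q$, $U$ extends further to a bounded linear functional on $\sB^q$ with $\|U\|_{(\sB^q)^\ast}\leq C$; the pointwise convergence $U_\varepsilon(\varphi)\to U(\varphi)$ persists for every $\varphi\in\sD$ by linearity.

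\textbf{Step 3 (Riesz representation and norm estimate).} Identifying $(\sB^q)^\ast$ with $\sB^p$, there is a unique $u\in\sB^p$ such that $U(\varphi)=\int_\Omega\int_Q u\,\varphi\,dx\,dP$ for all $\varphi\in\sB^q$. By construction $u_\varepsilon\tsq{\omega_0}u$. To sharpen the norm bound, for any $\varphi\in\sD$ we have
\begin{equation*}
\Big|\int_\Omega\int_Q u\,\varphi\,dx\,dP\Big|=\lim_{\varepsilon\to 0}|U_\varepsilon(\varphi)|\leq\Big(\liminf_{\varepsilon\to 0}\|u_\varepsilon\|_{L^p(Q)}\Big)\|\varphi\|_{\sB^q},
\end{equation*}
using Hölder and the limsup estimate from Lemma~\ref{L:admis}. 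By density of $\sD$ in $\sB^q$ this inequality holds for all $\varphi\in\sB^q$, which by duality gives \eqref{eq:two-scale-limit-estimate}.

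\textbf{Step 4 (Weak $L^p$-convergence of $u_\varepsilon$).} For any $\eta\in\sD_Q$, the function $\varphi(\omega,x):=\mathbf 1_\Omega(\omega)\eta(x)$ lies in $\sA$ and satisfies $(\unf^\ast\varphi)(\omega_0,x)=\eta(x)$, so
\begin{equation*}
\int_Q u_\varepsilon(x)\eta(x)\,dx=U_\varepsilon(\varphi)\to U(\varphi)=\int_Q\ex{u}(x)\eta(x)\,dx.
\end{equation*}
Since $\sD_Q$ is dense in $L^q(Q)$ and $(u_\varepsilon)$ is bounded in $L^p(Q)$, this convergence extends to all test functions in $L^q(Q)$, giving $u_\varepsilon\weakto\ex{u}$ weakly in $L^p(Q)$.

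The only delicate point is Step 2, where one must verify that the limit $U$, defined a priori only on the countable set $\sD_0$, extends unambiguously and continuously first to the $\R$-span $\sD$ and then to all of $\sB^q$; this is precisely where the countability of $\sA$ and the uniform estimate from Lemma~\ref{L:admis} cooperate. The remaining steps are bookkeeping via Hölder's inequality, Riesz representation, and density.
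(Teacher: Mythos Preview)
Your proof is correct and follows essentially the same route as the paper's: define linear functionals $U_\varepsilon$ on $\sD$, use H\"older together with Lemma~\ref{L:admis} for the uniform bound, extract a pointwise limit by a diagonal argument on the countable test set, extend to $(\sB^q)^*$, and identify $u$ by Riesz representation; the weak $L^p(Q)$-limit is obtained by testing with $\mathbf 1_\Omega\eta$. The only cosmetic difference is that the paper first passes to a subsequence along which $\liminf\|u_\varepsilon\|_{L^p(Q)}$ is realized and then reads off \eqref{eq:two-scale-limit-estimate} directly from $\|U\|_{(\sB^q)^*}\leq C_0$, whereas you carry the rough bound $C=\sup_\varepsilon\|u_\varepsilon\|_{L^p(Q)}$ through Step~2 and sharpen to the $\liminf$ afterwards in Step~3.

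One caution on phrasing: in Steps~1--2 you write that the bound $\limsup_\varepsilon|U_\varepsilon(\varphi)|\leq C\|\varphi\|_{\sB^q}$ ``extends by $\mathbb Q$-linearity'' from $\sA$ to $\sD_0$. Linearity alone does not transport such an estimate, since the right-hand side is a norm. What does extend by linearity is the mere boundedness of $(U_\varepsilon(\varphi))_\varepsilon$ for each $\varphi\in\sD_0$, which is all you need for the diagonal extraction; the sharp bound with the correct constant then comes in Step~3 from H\"older applied to $U_\varepsilon(\varphi)$ directly together with the $\limsup$ estimate of Lemma~\ref{L:admis}. The paper glosses over the same point, so this is not a discrepancy between the two arguments but a place where both could be stated more carefully.
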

{\it (For the proof see Section~\ref{S:4:p1}).}\smallskip

For our purpose it is convenient to have a metric characterization of two-scale convergence.
\begin{lemma}[Metric characterization]\label{L:metric-char}
  \begin{enumerate}[label=(\roman*)]
  \item 
  Let $\{\varphi_j\}_{j\in\N}$ denote an enumeration of the countable set  $\{\frac{\varphi}{\|\varphi\|_{\sB^q}}\,:\,\varphi\in \sD_0\}$. The vector space $\mbox{\rm Lin}(\sD):=\{U:\sD\to\R\text{ linear}\,\}$ endowed with the metric $$d(U,V;\mbox{\rm Lin}(\sD)):=\sum_{j\in\N}2^{-j}\frac{|U(\varphi_j)-V(\varphi_j)|}{|U(\varphi_j)-V(\varphi_j)|+1}$$  is complete and separable.
\item   Let $\omega_0\in\Omega_0$. Consider the maps
  \begin{align*}
    &J_\eps^{\omega_0}: L^p(Q)\to \mbox{\rm Lin}(\sD),\qquad (J_\eps^{\omega_0} u)(\varphi):=\int_Qu(x)(\unf^*\varphi)(\omega_0,x)\,dx,\\
    &J_0:\sB^p\to \mbox{\rm Lin}(\sD),\qquad (J_0u)(\varphi):=\ex{\int_Qu\varphi}.
  \end{align*}
  Then for any bounded sequence $u_\eps$ in $L^p(Q)$ and any $u\in \sB^p$ we have $u_\eps\tsq{\omega_0}u$ if and only if $J_\eps^{\omega_0} u_\eps \to J_0u$ in $\mbox{\rm Lin}(\sD)$.
\end{enumerate}
\end{lemma}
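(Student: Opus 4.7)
I would identify $\mbox{\rm Lin}(\sD)$ with a subset of $\R^{\N}$ via the evaluation map $\Phi(U):=(U(\varphi_j))_{j\in\N}$. Under this identification, $d$ coincides with the Fr\'echet product metric on $\R^{\N}$, which is well known to be Polish. It thus suffices to verify that $\Phi$ is injective with closed image, after which completeness and separability of $(\mbox{\rm Lin}(\sD),d)$ follow by restriction to a closed subspace of a Polish space. Injectivity is immediate: every $\varphi\in\sD_0$ is a scalar multiple of some $\varphi_j$ and, since $\sA\subset\sD_0$ and $\sD=\mbox{span}_\R\sA$, the values $\{U(\varphi_j)\}_{j\in\N}$ determine $U$ on all of $\sD$ by linearity. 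For closedness, suppose $\Phi(U_n)\to(a_j)$ in $\R^{\N}$. I would define a candidate $L\in\mbox{\rm Lin}(\sD)$ by writing any $\varphi\in\sD$ as a finite $\R$-linear combination $\varphi=\sum_ic_i\varphi_{j_i}$ and setting $L(\varphi):=\sum_ic_ia_{j_i}$. Well-definedness requires that any relation $\sum_ic_i\varphi_{j_i}=0$ imply $\sum_ic_ia_{j_i}=0$; this follows because linearity of each $U_n$ gives $\sum_ic_iU_n(\varphi_{j_i})=0$ for every $n$, and the limit of the left-hand side is $\sum_ic_ia_{j_i}$. Hence $L\in\mbox{\rm Lin}(\sD)$ with $\Phi(L)=(a_j)$, so the image is closed.

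\textbf{Plan for part (ii).} The key observation is that $d$-convergence of a sequence $U_\eps$ to $U$ in $\mbox{\rm Lin}(\sD)$ is equivalent to pointwise convergence $U_\eps(\varphi)\to U(\varphi)$ for every $\varphi\in\sD$. For the forward direction, the single-term bound $d(U_\eps,U)\geq 2^{-j}\frac{|U_\eps(\varphi_j)-U(\varphi_j)|}{|U_\eps(\varphi_j)-U(\varphi_j)|+1}$ gives $U_\eps(\varphi_j)\to U(\varphi_j)$ for each $j$, and linearity of each $U_\eps$ and $U$ together with $\sD=\mbox{span}_\R\{\varphi_j\}_{j\in\N}$ promotes this to pointwise convergence on all of $\sD$. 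For the reverse direction, pointwise convergence on every $\varphi_j$ lets me apply dominated convergence on $\N$ (each summand is bounded by $2^{-j}$) to conclude $d(U_\eps,U)\to 0$. Specializing to $U_\eps=J_\eps^{\omega_0}u_\eps$ and $U=J_0u$, pointwise convergence on $\sD$ is exactly the defining identity \eqref{eq:def-quenched-two-scale}; combined with the standing $L^p(Q)$-boundedness of $(u_\eps)$, this is precisely $u_\eps\tsq{\omega_0}u$.

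\textbf{Expected obstacle.} The only subtle point is the well-definedness of the extension $L$ in part (i), where I must ensure that any linear dependence relation among the $\varphi_{j_i}$ is inherited by the limits $(a_{j_i})$. This is handled by invoking linearity of each $U_n$ before passing to the limit, but it is the sole step that is not a direct topological bookkeeping argument. All remaining steps reduce to manipulating the product topology on $\R^{\N}$, linearity of the evaluation functionals $J_\eps^{\omega_0}$ and $J_0$, and the countable spanning structure of $\sA$ and $\sD_0$ fixed in Section~\ref{section:312}.
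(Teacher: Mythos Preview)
Your proof is correct. Part (ii) is essentially identical to the paper's argument: both reduce the equivalence to the observation that $d$-convergence in $\mbox{\rm Lin}(\sD)$ coincides with pointwise convergence on $\sD$, which is precisely the defining relation \eqref{eq:def-quenched-two-scale}.

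For part (i), your route differs from the paper's. The paper argues completeness and separability separately: completeness by showing that a Cauchy sequence $(U_n)$ converges pointwise on $\sD$ to a linear limit (this is the same content as your closedness step, phrased intrinsically), and separability by exhibiting a continuous injection $J:(\sB^q)^*\to\mbox{\rm Lin}(\sD)$ and showing its range is dense via explicit finite-rank truncations $U_k$ that agree with a given $U$ on $\mbox{span}\{\varphi_1,\dots,\varphi_k\}$ and vanish on a complement. Your approach instead identifies $\mbox{\rm Lin}(\sD)$ isometrically with a closed subset of the Polish space $\R^{\N}$ and inherits both completeness and separability at once. Your argument is more economical and avoids the paper's somewhat delicate construction of the approximants $U_k$ (which invokes an ``orthogonal complement'' in $\sB^q$, a notion that is not entirely standard for $q\neq 2$). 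What the paper's approach buys is an explicit dense image coming from $(\sB^q)^*$, which is conceptually natural given the later use of Riesz representation; your approach trades this for a cleaner topological bookkeeping.
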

{\it (For the proof see Section~\ref{S:4:p1}).}\smallskip
\begin{remark}
  Convergence in the metric space $(\mbox{\rm Lin}(\sD),d(\cdot,\cdot,\mbox{\rm Lin}(\sD))$ is equivalent to pointwise convergence. $(\sB^q)^*$ is naturally embedded into the metric space by means of the restriction $J:(\sB^q)^*\to\mbox{\rm Lin}(\sD)$, $JU=U\vert_{\sD}$. In particular, we deduce that for a bounded sequences $(U_k)$ in $(\sB^q)^*$ we have $U_k\stackrel{*}{\weakto} U$ if and only if $JU_k\to JU$ in the metric space. Likewise, $\sB^p$  (resp. $L^p(Q)$) can be embedded into the metric space $\mbox{\rm Lin}(\sD)$ via $J_0$ (resp. $J_\eps^{\omega_0}$ with $\eps>0$ and $\omega_0\in\Omega_0$ arbitrary but fixed), and for a bounded sequence $(u_k)$ in $\sB^p$ (resp. $L^p(Q)$) weak convergence in $\sB^p$ (resp. $L^p(Q)$) is equivalent to convergence of $(J_0u_k)$ (resp. $(J^{\omega_0}_\eps u_k)$) in the metric space.
\end{remark}
\begin{lemma}[Strong convergence implies quenched two-scale convergence]\label{L:strong}
Let $(u\e)$ be a strongly convergent sequence in $L^p(Q)$ with limit $u\in L^p(Q)$. Then for all $\omega_0\in\Omega_0$ we have $u\e\tsq{\omega_0}u$.  
\end{lemma}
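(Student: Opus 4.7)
The plan is to reduce the general claim, via Hölder's inequality and the uniform bound from Lemma~\ref{L:admis}, to verifying the defining convergence only when the slow variable is tested against a fixed element of $L^p(Q)$ (rather than a moving sequence). More precisely, by Definition~\ref{def:two-scale-conv}, I need to show that for every $\varphi \in \sD$
\[
  \int_Q u_\eps(x)(\unf^*\varphi)(\omega_0,x)\,dx \;\longrightarrow\; \ex{\int_Q u(x)\varphi(\omega,x)\,dx},
\]
and that $(u_\eps)$ is bounded in $L^p(Q)$; the latter is immediate from strong convergence.

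First, I would establish an auxiliary convergence statement: for every $\varphi\in \sD$ and every $v\in L^p(Q)$,
\[
\lim_{\eps\to 0}\int_Q v(x)(\unf^*\varphi)(\omega_0,x)\,dx=\ex{\int_Q v\varphi}.
\]
By linearity in $\varphi$ it suffices to take $\varphi=\varphi_\Omega\varphi_Q\in\sA$. For any $\eta\in\sD_Q$, the function $\mathbf 1_\Omega\otimes\eta$ lies in $\sA$, so the second identity in Lemma~\ref{L:admis}, applied to the pair $\varphi,\,\mathbf 1_\Omega\otimes\eta$, gives
\[
\lim_{\eps\to 0}\int_Q \eta(x)(\unf^*\varphi)(\omega_0,x)\,dx=\ex{\int_Q \eta\varphi}.
\]
By $\mathbb Q$-linearity this extends to $\eta\in\overline{\sD}$. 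For general $v\in L^p(Q)$, pick $\eta_k\in\overline{\sD}$ with $\eta_k\to v$ in $L^p(Q)$ and split
\[
\int_Q v(\unf^*\varphi)(\omega_0,\cdot)-\ex{\int_Q v\varphi}
=\int_Q (v-\eta_k)(\unf^*\varphi)(\omega_0,\cdot)
+\Bigl[\int_Q \eta_k(\unf^*\varphi)(\omega_0,\cdot)-\ex{\int_Q\eta_k\varphi}\Bigr]
+\ex{\int_Q (\eta_k-v)\varphi}.
\]
Using Hölder's inequality together with the uniform bound $\|(\unf^*\varphi)(\omega_0,\cdot)\|_{L^q(Q)}\le C_\varphi$ granted by the first part of Lemma~\ref{L:admis}, the first term is bounded by $C_\varphi\|v-\eta_k\|_{L^p(Q)}$ uniformly in $\eps$, and the last by $\|\varphi\|_{\sB^q}\|v-\eta_k\|_{L^p(Q)}$; for fixed $k$ the middle bracket tends to $0$. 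Letting $\eps\to 0$ and then $k\to\infty$ yields the auxiliary claim.

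Second, I would apply this to $v=u$ and combine with the strong convergence of $u_\eps$: write
\[
\int_Q u_\eps (\unf^*\varphi)(\omega_0,\cdot)\,dx
=\int_Q (u_\eps-u)(\unf^*\varphi)(\omega_0,\cdot)\,dx+\int_Q u(\unf^*\varphi)(\omega_0,\cdot)\,dx.
\]
The first term is controlled by $\|u_\eps-u\|_{L^p(Q)}\cdot\|(\unf^*\varphi)(\omega_0,\cdot)\|_{L^q(Q)}$, which tends to zero since the first factor vanishes and the second is eventually bounded by $2\|\varphi\|_{\sB^q}$ by Lemma~\ref{L:admis}; the second term converges to $\ex{\int_Q u\varphi}$ by the auxiliary claim. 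Together with boundedness of $(u_\eps)$ this verifies the definition of $u_\eps\tsq{\omega_0} u$.

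The only subtle point is not to assume $u\in\overline{\sD}$, which is why the density/Hölder argument using the uniform bound from Lemma~\ref{L:admis} is needed; everything else reduces to linearity and the countable-set-valid ergodic convergence already encoded in the choice of $\Omega_0$.
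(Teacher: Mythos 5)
Your proof is correct and follows essentially the same route as the paper: Hölder's inequality together with the uniform $L^q$-bound from Lemma~\ref{L:admis} kills the difference term $\int_Q(u_\eps-u)\unf^*\varphi(\omega_0,\cdot)\,dx$, which is exactly the paper's one-line argument. The only difference is that you additionally spell out, via density of $\mbox{span}(\sD_Q)$ in $L^p(Q)$ and the product convergence in Lemma~\ref{L:admis}, the auxiliary fact that the fixed limit $u\in L^p(Q)$ itself satisfies $\int_Q u\,\unf^*\varphi(\omega_0,\cdot)\,dx\to\ex{\int_Q u\varphi}$ — a step the paper's terse proof leaves implicit — so your write-up is a slightly more complete version of the same argument.
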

{\it (For the proof see Section~\ref{S:4:p1}).}\smallskip

\begin{defn}[set of quenched two-scale cluster points]
  For a bounded sequence $(u\e)$ in $L^p(Q)$ and $\omega_0\in\Omega_0$ we denote by $\CL(\omega_0,(u\e))$ the set of all $\omega_0$-two-scale cluster points, i.e. the set of $u\in\sB^p$ with  $J_0u\in \bigcap_{k=1}^\infty\overline{\big\{J^{\omega_0}_{\eps} u_\eps\,:\,\eps<\frac{1}{k}\big\}}$  where the closure is taken in the metric space $\big(\mbox{\rm Lin}(\sD),d(\cdot,\cdot;\mbox{\rm Lin}(\sD)))$.
\end{defn}
We conclude this section with two elementary results on quenched stochastic two-scale convergence:
\begin{lemma}[Approximation of two-scale limits]
\label{lem:Every-v-is-a-fB-limit}Let $u\in\sB^p$.
Then for all $\omega_0\in\Omega_0$, there exists a sequence
$u\e\in L^{p}(Q)$ such that $u\e\stackrel{2}{\weakto}_{\omega_0} u$ as $\eps\to0$.
\end{lemma}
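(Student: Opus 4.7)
The plan is to first construct a recovery sequence on the dense subspace $\sD\subset\sB^p$, and then pass to general $u\in\sB^p$ by a diagonal argument based on the metric characterization from Lemma~\ref{L:metric-char}.

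For a simple tensor $\varphi=\varphi_\Omega\varphi_Q\in\sA$, the natural candidate is $u_\eps(x):=(\unf^*\varphi)(\omega_0,x)=\varphi_\Omega(\tau_{x/\eps}\omega_0)\varphi_Q(x)$. By the first assertion of Lemma~\ref{L:admis} with $r=p$ this sequence is bounded in $L^p(Q)$ with $\limsup_\eps\|u_\eps\|_{L^p(Q)}\le\|\varphi\|_{\sB^p}$. To verify $u_\eps\tsq{\omega_0}\varphi$, I test against an arbitrary $\psi\in\sA$: since $\unf^*$ is a pointwise evaluation, one has the identity $(\unf^*\varphi)(\omega_0,\cdot)(\unf^*\psi)(\omega_0,\cdot)=\unf^*(\varphi\psi)(\omega_0,\cdot)$, so by the second assertion of Lemma~\ref{L:admis} the left-hand side of \eqref{eq:def-quenched-two-scale} tends to $\ex{\int_Q\varphi\psi\,dx}$. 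Extending by linearity, this yields a bounded recovery sequence for every $u\in\sD$.

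For general $u\in\sB^p$, I would choose $u_k\in\sD$ with $u_k\to u$ in $\sB^p$ (possible since $\sD$ is dense in $\sB^p$). The previous step supplies, for each $k$, a bounded sequence $v^k_\eps\in L^p(Q)$ with $v^k_\eps\tsq{\omega_0}u_k$; by Lemma~\ref{L:metric-char}(ii) this is equivalent to $J_\eps^{\omega_0}v^k_\eps\to J_0u_k$ in $(\mbox{Lin}(\sD),d)$. I then pick $\eps_k\searrow 0$ such that for all $\eps\in(0,\eps_k]$,
\begin{equation*}
d(J_\eps^{\omega_0}v^k_\eps,J_0u_k;\mbox{Lin}(\sD))<\tfrac{1}{k}\quad\text{and}\quad\|v^k_\eps\|_{L^p(Q)}\le\|u_k\|_{\sB^p}+1,
\end{equation*}
and define $u_\eps:=v^{k(\eps)}_\eps$, where $k(\eps):=\max\{k:\eps\le\eps_k\}\to\infty$ as $\eps\to 0$. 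Since $(\|u_k\|_{\sB^p})_k$ is bounded, so is $(u_\eps)$ in $L^p(Q)$, and the triangle inequality yields
\begin{equation*}
d(J_\eps^{\omega_0}u_\eps,J_0u)\le\tfrac{1}{k(\eps)}+d(J_0u_{k(\eps)},J_0u)\longrightarrow 0,
\end{equation*}
where I used that strong convergence $u_k\to u$ in $\sB^p$ implies $J_0u_k(\varphi)=\ex{\int_Q u_k\varphi\,dx}\to\ex{\int_Q u\varphi\,dx}=J_0u(\varphi)$ for every $\varphi\in\sD$, which is equivalent to convergence in the metric. A further application of Lemma~\ref{L:metric-char}(ii) then gives $u_\eps\tsq{\omega_0}u$.

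I do not anticipate a serious obstacle; the one point requiring care is to preserve $L^p(Q)$-boundedness along the diagonal (needed both for Definition~\ref{def:two-scale-conv} and for the applicability of Lemma~\ref{L:metric-char}(ii)), which is why I impose the auxiliary norm bound when selecting $\eps_k$.
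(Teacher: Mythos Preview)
Your proposal is correct and follows essentially the same argument as the paper: define the recovery sequence on $\sD$ by $u_\eps:=(\unf^*v)(\omega_0,\cdot)$, verify boundedness and quenched two-scale convergence via Lemma~\ref{L:admis}, rephrase this as convergence in the metric of Lemma~\ref{L:metric-char}, and pass to general $u\in\sB^p$ by density and a diagonal extraction controlled via the triangle inequality. Your treatment is in fact slightly more explicit than the paper's about maintaining the $L^p(Q)$-bound along the diagonal, which the paper asserts without detail.
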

{\it (For the proof see Section~\ref{S:4:p1}).}\smallskip

Similar to the slightly different setting in \cite{Heida2017b}
one can prove the following result:
\begin{lemma}[Two-scale limits of gradients]
\label{lem:sto-conver-grad}
Let $(u_{\eps})$ be a sequence in $W^{1,p}(Q)$ and $\omega_0\in\Omega_0$. Then there exist a subsequence (not relabeled) and functions $u\in W^{1,p}(Q)$ and $\chi\in L^p_{\mathrm{pot}}(\Omega)\otimes L^{p}(Q)$ such that $u\e\weakto u$ weakly in $W^{1,p}(Q)$ and
\[
u_{\eps}\tsq{\omega_0}u\quad\mbox{and }\quad\nabla u_{\eps}\tsq{\omega_0}\nabla u+\chi\qquad\mbox{as }\eps\to0\,.
\]
\end{lemma}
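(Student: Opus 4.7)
The plan is to first extract limits by quenched two-scale compactness, then identify the ``macroscopic'' limit via Rellich-Kondrachov and the uniqueness of two-scale limits, and finally characterize the corrector by testing against stochastically solenoidal fields. I assume (as is tacit in the conclusion) that $(u_\eps)$ is bounded in $W^{1,p}(Q)$.

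First, by Lemma~\ref{lem:two-scale-limit} applied componentwise to $(u_\eps)$ and $(\nabla u_\eps)$, after extracting a subsequence we have $u_\eps \tsq{\omega_0} u_0$ in $\sB^p$ and $\nabla u_\eps \tsq{\omega_0} w$ in $(\sB^p)^d$ for some $u_0\in\sB^p$, $w\in(\sB^p)^d$. By Rellich-Kondrachov (a further subsequence if needed), $u_\eps\to u$ strongly in $L^p(Q)$ and $u_\eps\weakto u$ weakly in $W^{1,p}(Q)$ for some $u\in W^{1,p}(Q)$. Lemma~\ref{L:strong} then gives $u_\eps\tsq{\omega_0} u$, so uniqueness of the two-scale limit yields $u_0=u$ viewed as an $\omega$-independent element of $\sB^p$. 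It remains to identify $\chi:=w-\nabla u\in L^p(\Omega\times Q)^d$ as lying in $L^p_{\pot}(\Omega)\otimes L^p(Q)$.

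To this end, set $L^q_{\mathrm{sol}}(\Omega):=\{\zeta\in L^q(\Omega)^d:\ex{\zeta\cdot D\phi}=0\text{ for all }\phi\in W^{1,p}(\Omega)\}$, the annihilator of $L^p_{\pot}(\Omega)$ under $L^p$--$L^q$ duality; by the orthogonal decomposition $L^p(\Omega)^d=L^p_{\pot}(\Omega)\oplus(L^q_{\mathrm{sol}}(\Omega))^\perp$ it suffices to prove
\[
\ex{\int_Q\chi(\omega,x)\cdot\psi(\omega)\eta(x)\,dx}=0\qquad\forall\,\eta\in C_c^\infty(Q),\ \psi\in L^q_{\mathrm{sol}}(\Omega).
\]
For such $\psi$, the stationary realization $\psi_\eps(x):=\psi(\tau_{x/\eps}\omega_0)$ is $P$-a.s.~distributionally divergence-free on $\R^d$, so for $\omega_0\in\Omega_0$ (enlarged suitably, see below) integration by parts on $Q$ yields
\[
\int_Q\nabla u_\eps\cdot\eta\psi_\eps\,dx=-\int_Q u_\eps\,\nabla\eta\cdot\psi_\eps\,dx.
\]
The left-hand side tends to $\ex{\int_Q w\cdot\eta\psi\,dx}$ by quenched two-scale convergence of $\nabla u_\eps$. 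On the right-hand side, $u_\eps\to u$ strongly in $L^p(Q)$ while $\psi_\eps\weakto\ex{\psi}$ weakly in $L^q(Q)$ by Birkhoff (Theorem~\ref{thm:ergodic-thm}), so the limit equals $-\int_Q u\,\nabla\eta\cdot\ex{\psi}\,dx=\ex{\int_Q\nabla u\cdot\eta\psi\,dx}$ after a further integration by parts using $u\in W^{1,p}(Q)$. Combining the two gives the desired orthogonality.

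The main obstacle is admissibility: the countable test space $\sD$ and the full-measure set $\Omega_0$ are tailored to $\sA$, whereas the above argument requires $\omega_0$-pointwise Birkhoff behavior for the solenoidal fields $\psi_\eps$. I overcome this by choosing $\sD_\Omega$ from the outset to contain a countable dense subset of bounded solenoidal fields (possible since $L^q(\Omega)$ is separable), so that a single $\Omega_0$ simultaneously serves as a Birkhoff set for all relevant $\psi$, and then by using density to upgrade the orthogonality from this countable class to all of $L^q_{\mathrm{sol}}(\Omega)$. A minor subsidiary point is the $P$-a.s.~distributional divergence-freeness of $\psi_\eps$ for $\psi\in L^q_{\mathrm{sol}}(\Omega)$; this is a standard consequence of the defining relation of $L^q_{\mathrm{sol}}(\Omega)$ together with Birkhoff applied to $\ex{\psi\cdot D\phi}$, and can likewise be absorbed into the selection of $\Omega_0$.
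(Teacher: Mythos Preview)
The paper does not give a proof of this lemma; it only says that one argues ``similar to the slightly different setting in \cite{Heida2017b}''. Your proposal follows exactly that standard route (compactness for the pair $(u_\eps,\nabla u_\eps)$, Rellich--Kondrachov together with Lemma~\ref{L:strong} to pin down the macroscopic part, and then orthogonality against stochastically solenoidal fields to force the corrector into $L^p_{\pot}(\Omega)\otimes L^p(Q)$), so the strategy is correct and matches what the paper has in mind.

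Two points deserve tightening. First, the line ``orthogonal decomposition $L^p(\Omega)^d=L^p_{\pot}(\Omega)\oplus(L^q_{\mathrm{sol}}(\Omega))^\perp$'' is garbled: what you actually use (and what is true) is that $L^p_{\pot}(\Omega)$, being closed, equals the pre-annihilator of its annihilator $L^q_{\mathrm{sol}}(\Omega)$, so it suffices to test $\chi$ against $\psi\otimes\eta$ with $\psi\in L^q_{\mathrm{sol}}(\Omega)$. Second, your fix for admissibility (``choose $\sD_\Omega$ to contain a countable dense subset of bounded solenoidal fields, possible since $L^q(\Omega)$ is separable'') is not quite self-contained: separability of $L^q_{\mathrm{sol}}(\Omega)$ gives a countable dense subset, but not one consisting of \emph{bounded} vectors, and truncation destroys the solenoidal constraint. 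The usual remedy is to regularize along the dynamical system: for $\psi\in L^q_{\mathrm{sol}}(\Omega)$ set $\psi_\rho(\omega)=\int_{\R^d}\psi(\tau_y\omega)\rho(y)\,dy$ with a mollifier $\rho$; then $\psi_\rho\to\psi$ in $L^q$, $\psi_\rho$ stays solenoidal, and $\psi_\rho\in W^{1,q}(\Omega)^d$ so that the realization $x\mapsto\psi_\rho(\tau_{x/\eps}\omega_0)$ is genuinely divergence-free. A countable family of such $\psi_\rho$ (built from a countable dense set of $\psi$'s and a countable family of mollifiers) is what one adjoins to $\sD_\Omega$ before fixing $\Omega_0$. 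With this adjustment your argument is complete.
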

\subsubsection{Proofs}\label{S:4:p1}
\begin{proof}[Proof of Lemma~\ref{lem:two-scale-limit}]
  Set $C_0:=\limsup\limits_{\eps\to 0}\|u\e\|_{L^p(Q)}$ and note that $C_0<\infty$. By passing to a subsequence (not relabeled) we may assume that $C_0=\liminf\limits_{\eps\to 0}\|u\e\|_{L^p(Q)}$.
Fix $\omega_0\in\Omega_0$. Define linear functionals $U_\eps\in\mbox{\rm Lin}(\sD)$ via
    \begin{equation*}
      U_\eps(\varphi):=\int_Qu\e(x)(\unf^*\varphi)(\omega_0,x)\,dx.
    \end{equation*}
    Note that for all $\varphi\in\sA$, $(U\e(\varphi))$ is a bounded sequence in $\R$. Indeed, by H\"older's inequality and Lemma~\ref{L:admis},
    \begin{equation}\label{eq:x1}
      \limsup\limits_{\eps\to0}|U\e(\varphi)|\leq      \limsup\limits_{\eps\to0}\|u\e\|_{L^p(Q)}\|\unf^*\varphi(\omega_0,\cdot)\|_{L^q(Q)}\leq C_0\|\varphi\|_{\sB^q}.
    \end{equation}
    Since $\sA$ is countable we can pass to a subsequence (not relabeled) such that $U\e(\varphi)$ converges for all $\varphi\in\sA$. By linearity and since $\sD=\mbox{span}(\sA)$, we conclude that $U\e(\varphi)$ converges for all $\varphi\in\sD$, and $U(\varphi):=\lim\limits_{\eps\to0}U\e(\varphi)$ defines a linear functional on $\sD$. In view of \eqref{eq:x1} we have $|U(\varphi)|\leq C_0\|\varphi\|_{\sB^q}$, and thus $U$ admits a unique extension to a linear functional in $(\sB^q)^*$. Let $u\in\sB^p$ denote its Riesz-representation. Then $u\e\tsq{\omega_0} u$, and
    \begin{equation*}
      \|u\|_{\sB^p}=\|U\|_{(\sB^q)^*}\leq C_0=\liminf\limits_{\eps\to 0}\|u\e\|_{L^p(Q)}.
    \end{equation*}
    Since $\mathbf 1_{\Omega}\in\sD_\Omega$ we conclude that for all $\varphi_Q\in\sD_Q$ we have
    \begin{equation*}
      \int_Qu\e(x)\varphi_Q(x)\,dx=U\e(\mathbf 1_{\Omega}\varphi_Q)\to U(\mathbf 1_{\Omega}\varphi_Q)=\ex{\int_Q u(\omega,x)\varphi_Q(x)\,dx}=\int_Q \ex{u(x)}\varphi_Q(x)\,dx.
    \end{equation*}
    Since $(u\e)$ is bounded in $L^p(Q)$, and $\sD_Q\subset L^p(Q)$ is dense, we conclude that $u\e\weakto\ex{u}$ weakly in $L^p(Q)$.
\end{proof}

\begin{proof}[Proof of Lemma~\ref{L:metric-char}]
We use the following notation in this proof $\sA_1:=\{\frac{\varphi}{\|\varphi\|_{\sB^q}}\,:\,\varphi\in \sD_0\}$.

(i) Argument for completeness: If $(U_j)$ is a Cauchy sequence in $\mbox{\rm Lin}(\sD)$, then for all $\varphi\in\sA_1$, 
    $(U_j(\varphi))$ is a Cauchy sequence in $\R$. By linearity of the $U_j$'s this implies that $(U_j(\varphi))$ is Cauchy in $\R$ for all $\varphi\in\sD$. Hence, $U_j\to U$ pointwise in $\sD$ and it is easy to check that $U$ is linear. Furthermore, $U_j\to U$ pointwise in $\sA_1$ implies $U_j\to U$ in the metric space.
    
Argument for separability: Consider the (injective) map $J:(\sB^q)^*\to\mbox{\rm Lin}(\sD)$ where $J(U)$ denotes the restriction of $U$ to $\sD$. The map $J$ is continuous, since for all $U,V\in(\sB^q)^*$ and $\varphi\in\sA_1$ we have $|(JU)(\varphi)-(JV)(\varphi)|\leq \|U-V\|_{(\sB^q)^*}\|\varphi\|_{\sB^q}=\|U-V\|_{(\sB^q)^*}$ (recall that the test functions in $\sA_1$ are normalized). Since $(\sB^q)^*$ is separable (as a consequence of the assumption that $\mathcal F$ is countably generated), it suffices to show that the range $\mathcal R(J)$ of $J$ is dense in $\mbox{\rm Lin}(\sD)$. To that end let $U\in\mbox{\rm Lin}(\sD)$. For $k\in\N$ we denote by $U_k\in(\sB^q)^*$ the unique linear functional that is equal to $U$ on the the finite dimensional (and thus closed) subspace $\mbox{span}\{\varphi_1,\ldots,\varphi_k\}\subset \sB^q$ (where $\{\varphi_j\}$ denotes the enumeration of $\sA_1$), and zero on the orthogonal complement in $\sB^q$. Then a direct calculation shows that $d(U,J(U_k);\mbox{\rm Lin}(\sD))\leq\sum_{j>k}2^{-j}=2^{-k}$. Since $k\in\N$ is arbitrary, we conclude that $\mathcal R(J)\subset\mbox{\rm Lin}(\sD)$ is dense.

(ii) Let $u\e$ denote a bounded sequence in $L^p(Q)$ and $u\in \sB^p$. Then by definition, $u\e\tsq{\omega_0}u$ is equivalent to $J^{\omega_0}\e u\e\to J_0u$ pointwise in $\sD$, and the latter is equivalent to convergence in the metric space $\mbox{\rm Lin}(\sD)$.
\end{proof}

\begin{proof}[Proof of Lemma~\ref{L:strong}]
This follows from H{\"o}lder's inequality and Lemma~\ref{L:admis}, which imply for all $\varphi\in\sA$ the estimate
\begin{multline*}
  \limsup\limits_{\eps\to 0}\int_Q|(u\e(x)-u(x))\unf^*\varphi(\omega_0,x)|\,dx\\ 
	\leq    \limsup\limits_{\eps\to 0}\Big(\|u\e-u\|_{L^p(Q)}\left(\int_Q|\unf^*\varphi(\omega_0,x)|^q\,dx\right)^\frac1q\Big)=0.
\end{multline*}
\end{proof}

\begin{proof}[Proof of Lemma~\ref{lem:Every-v-is-a-fB-limit}]
  Since $\sD$ (defined as in Lemma~\ref{L:metric-char}) is dense in $\sB^p$, for any $\delta>0$ there exists $v_\delta\in\sD_0$ with $\|u-v_\delta\|_{\sB^p}\leq \delta$. Define $v_{\delta,\eps}(x):=\unf^*v_\delta(\omega_0,x)$. Let $\varphi\in\sD$. Since $v_\delta$ and $\varphi$ (resp. $v_\delta\varphi$) are by definition linear combinations of functions (resp. products of functions) in $\sA$, we deduce from Lemma~\ref{L:admis} that $(v_{\delta,\eps})_{\eps}$ is bounded in $L^p(Q)$, and that 
  \begin{equation*}
    \int_Qv_{\delta,\eps}\unf^*\varphi(\omega_0,x)=\int_Q\unf^*(v_\delta\varphi)(\omega_0,x)\to \ex{\int_Qv_\delta\varphi}.
  \end{equation*}
  By appealing to the metric characterization, we can rephrase the last convergence statement as $d(J^{\omega_0}\e v_{\delta,\eps},J_0v_\delta;\mbox{\rm Lin}(\sD))\to 0$. By the triangle inequality we have
  \begin{eqnarray*}
    d(J^{\omega_0}\e v_{\delta,\eps},J_0u;\mbox{\rm Lin}(\sD))&\leq& d(J^{\omega_0}\e v_{\delta,\eps},J_0v_\delta;\mbox{\rm Lin}(\sD))+d(J_0v_\delta,J_0u;\mbox{\rm Lin}(\sD)).
  \end{eqnarray*}
  The second term is bounded by $\|v_\delta-u\|_{\sB^p}\leq \delta$, while the first term vanishes for $\eps\downarrow 0$. Hence, there exists a diagonal sequence $u\e:=v_{\delta(\eps),\eps}$ (bounded in $L^p(Q)$) such that there holds $d(J^{\omega_0}\e u_{\eps},J_0u;\mbox{\rm Lin}(\sD))\to 0$. The latter implies $u_{\eps}\tsq{\omega_0}u$ by Lemma~\ref{L:metric-char}.
\end{proof}

\subsection{Young measures generated by two-scale convergence}\label{Section_4}
In this section we establish a relation between quenched two-scale
convergence and two-scale convergence in the mean (in the sense of Definition
\ref{def46}). The relation is established by Young measures: We show that any bounded sequence $u\e$ in $\sB^p$ -- viewed as a functional acting on test-functions of the form $\unf^*\varphi$ -- generates (up to a subsequence) a Young measure on $\sB^p$ that (a) concentrates on the quenched two-scale cluster points of $u\e$, and (b) allows to represent the two-scale limit (in the mean) of $u\e$. In entire Section \ref{Section_4} we assume that
\begin{equation*}
\ex{\cdot} \text{ is ergodic.}
\end{equation*}
Also, throughout this section we fix exponents $p\in(1,\infty)$, $q:=\frac{p}{p-1}$, and an open and bounded domain $Q\subset\R^d$. Furthermore, we frequently use the objects and notations introduced in Section \ref{section:312}.
\begin{defn}
  We say $\boldsymbol{\nu}:=\left\{ \nu_{\omega}\right\} _{\omega\in\Omega}$ is a Young measure on $\sB^p$, if for all $\omega\in\Omega$, $\nu_\omega$ is a Borel probability measure on $\sB^p$  (equipped with the weak topology) and 
  \[
  \omega\mapsto\nu_{\omega}(B)\quad\mbox{is }\mbox{measurable for all }B\in\cB(\sB^p),
  \]
  where $\cB(\sB^p)$ denotes the Borel-$\sigma$-algebra on $\sB^p$  (equipped with the weak topology).
\end{defn}
\begin{thm}
\label{thm:Balder-Thm-two-scale}Let $u_{\eps}$ denote a bounded sequence in $\sB^p$.
Then there exists a subsequence (still denoted by $\eps$) and a Young measure  $\boldsymbol{\nu}$ on $\sB^p$
such that for all $\omega_0\in\Omega_0$,
\[
\nu_{\omega_0}\mbox{ is concentrated on }\CL\left(\omega_0,\big(u_{\eps}(\omega_0,\cdot)\big)\right),
  \]
  and 
  \[
  \liminf_{\eps\to0}\Vert u_{\eps}\Vert_{\sB^p}^{p}\geq \int_{\Omega}\left(\int_{\sB^p}\left\Vert v\right\Vert _{\sB^p}^{p}\,d\nu_{\omega}(v)\right)\,dP(\omega).
  \]
  Moreover, we have 
  \[
  u_{\eps}\ts u\qquad\text{where }u:=\int_{\Omega}\int_{\sB^p}v\, d\nu_{\omega}(v)dP(\omega).
  \]
  Finally, if there exists $v:\Omega\to\sB^p$ measurable and $\nu_{\omega}=\delta_{v(\omega)}$ for $P$-a.a. $\omega\in\Omega$,
  then up to extraction of a further subsequence (still denoted by $\eps$) we have
  \[
  u_{\eps}(\omega)\tsq{\omega}v(\omega)\qquad\text{for $P$-a.a.~$\omega\in\Omega$}.
  \]
\end{thm}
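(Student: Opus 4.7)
The plan is to lift the problem to the Polish space $(\mbox{\rm Lin}(\sD), d(\cdot,\cdot;\mbox{\rm Lin}(\sD)))$ of Lemma~\ref{L:metric-char}, apply the fundamental Young measure theorem (Balder/Valadier) there, and then pull back to $\sB^p$ via the injection $J_0$. For $\omega\in\Omega_0$ set $\Phi_\varepsilon(\omega) := J_\varepsilon^\omega u_\varepsilon(\omega,\cdot)\in\mbox{\rm Lin}(\sD)$; this is $\mathcal F$-measurable since $\sA$ is countable and each coordinate $\omega\mapsto\int_Q u_\varepsilon(\omega,x)\varphi_j(\tau_{x/\varepsilon}\omega,x)\,dx$ is measurable. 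The isometry of $\unf^*$ and H\"older's inequality yield
\begin{equation*}
\int_\Omega |\Phi_\varepsilon(\omega)(\varphi_j)|\,dP(\omega) \leq \|u_\varepsilon\|_{\sB^p}\|\unf^*\varphi_j\|_{\sB^q} = \|u_\varepsilon\|_{\sB^p}\leq C
\end{equation*}
for every normalized $\varphi_j$ in the enumeration of Lemma~\ref{L:metric-char}. Since compact subsets of $\mbox{\rm Lin}(\sD)$ are characterized coordinatewise (via that enumeration), Chebyshev yields tightness of $(\Phi_\varepsilon)$, and the Young measure theorem produces, along a subsequence, a Young measure $\boldsymbol{\mu}=\{\mu_\omega\}$ on $\mbox{\rm Lin}(\sD)$ concentrated on the cluster set of $(\Phi_\varepsilon(\omega))$ for $P$-a.a.~$\omega$.

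To pull back, we apply Lemma~\ref{lem:two-scale-limit} pointwise in $\omega\in\Omega_0$ and observe that every cluster point of $(\Phi_\varepsilon(\omega))$ in $\mbox{\rm Lin}(\sD)$ has the form $J_0 v$ with $v\in\sB^p$ satisfying $\|v\|_{\sB^p}\leq\liminf_\varepsilon\|u_\varepsilon(\omega,\cdot)\|_{L^p(Q)}$. Hence $\mu_\omega$ is concentrated on $J_0(\sB^p)$; since $J_0$ restricted to a bounded weakly closed subset of $\sB^p$ is a weak-to-metric homeomorphism onto its image (by density of $\sD$ in $\sB^q$), $J_0^{-1}$ is Borel on $J_0(\sB^p)$, and $\nu_\omega := (J_0^{-1})_*\mu_\omega$ is a well-defined Young measure on $\sB^p$ endowed with the weak topology, supported on $\CL(\omega,(u_\varepsilon(\omega,\cdot)))$ by construction.

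The support bound combined with Fatou gives
\begin{equation*}
\int_\Omega\int_{\sB^p}\|v\|_{\sB^p}^p\,d\nu_\omega(v)\,dP(\omega) \leq \int_\Omega\liminf_\varepsilon\|u_\varepsilon(\omega,\cdot)\|_{L^p(Q)}^p\,dP(\omega) \leq \liminf_\varepsilon\|u_\varepsilon\|_{\sB^p}^p.
\end{equation*}
For $\varphi\in\sA$ one has $|\Phi_\varepsilon(\omega)(\varphi)|\leq C_\varphi\|u_\varepsilon(\omega,\cdot)\|_{L^p(Q)}$, which is $L^p(\Omega)$-bounded and thus uniformly integrable; the Young measure representation together with Fubini yields
\begin{equation*}
\ex{\int_Q u_\varepsilon\,\unf^*\varphi\,dx} \longrightarrow \int_\Omega\int_{\sB^p}(J_0 v)(\varphi)\,d\nu_\omega(v)\,dP(\omega) = \ex{\int_Q u\,\varphi\,dx},
\end{equation*}
with $u(\omega',x) := \int_\Omega\int_{\sB^p} v(\omega',x)\,d\nu_\omega(v)\,dP(\omega)$; density of $\sD$ in $\sB^q$ and Remark~\ref{remark:242} then give $u_\varepsilon\ts u$.

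Finally, if $\nu_\omega=\delta_{v(\omega)}$ for $P$-a.a.~$\omega$, then $\mu_\omega=\delta_{J_0 v(\omega)}$, so by the standard characterization of Dirac Young measures $(\Phi_\varepsilon)$ converges to $J_0 v$ in $P$-measure in $\mbox{\rm Lin}(\sD)$; extracting a further subsequence gives $P$-a.s.~convergence, and Lemma~\ref{L:metric-char}(ii) yields $u_\varepsilon(\omega,\cdot)\tsq{\omega}v(\omega)$ for $P$-a.a.~$\omega$. The principal technical obstacle is the pullback step: showing that $\mu_\omega$ is concentrated on $J_0(\sB^p)$ and that $J_0^{-1}$ is Borel on this image, so that $\nu_\omega$ is a bona fide Young measure on $\sB^p$ equipped with the weak topology. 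This rests on combining the countable coordinate structure of $\mbox{\rm Lin}(\sD)$ with the pointwise compactness statement of Lemma~\ref{lem:two-scale-limit}, and is precisely why the metric characterization was introduced.
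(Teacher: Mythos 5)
Your overall strategy (lift to the metric space of Lemma~\ref{L:metric-char}, apply Balder's theorem, pull back through $J_0$) is the same as the paper's, but you work in $\mbox{\rm Lin}(\sD)$ alone, whereas the paper deliberately augments the state to triples $(U,\eps,r)$ with $r=\|u_\eps(\omega,\cdot)\|_{L^p(Q)}$, and this is where your argument has a genuine gap. Your pullback step claims that \emph{every} cluster point of $\Phi_\eps(\omega)=J^\omega_\eps u_\eps(\omega,\cdot)$ in $\mbox{\rm Lin}(\sD)$ is of the form $J_0v$ with $\|v\|_{\sB^p}\le\liminf_\eps\|u_\eps(\omega,\cdot)\|_{L^p(Q)}$, ``by Lemma~\ref{lem:two-scale-limit} pointwise in $\omega$''. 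But Lemma~\ref{lem:two-scale-limit} requires $(u_\eps(\omega,\cdot))$ to be bounded in $L^p(Q)$ for the fixed $\omega$, and boundedness of $(u_\eps)$ in $\sB^p$ gives no such pointwise-in-$\omega$ bound: along subsequences where $\|u_\eps(\omega,\cdot)\|_{L^p(Q)}$ blows up, pointwise limits of $\Phi_\eps(\omega)$ on $\sD$ need not be representable as $J_0v$ at all, so you cannot conclude $\mu_\omega(J_0(\sB^p))=1$, and the push-forward $\nu_\omega$ is not justified. Moreover, even along norm-bounded subsequences the bound you state is wrong as written: a cluster point obtained along a sub-subsequence only satisfies $\|v\|_{\sB^p}\le\liminf$ of the norms \emph{along that sub-subsequence}, which can exceed the full-sequence $\liminf_\eps\|u_\eps(\omega,\cdot)\|_{L^p(Q)}$; hence your ``support bound plus Fatou'' derivation of the inequality $\liminf_\eps\|u_\eps\|^p_{\sB^p}\ge\int_\Omega\int_{\sB^p}\|v\|^p\,d\nu_\omega\,dP$ does not go through.

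The paper's extra bookkeeping is exactly what closes these holes: the tightness integrand on $\sM$ contains $r^p$, so $\int_\Omega h(\omega,s_\eps(\omega))\,dP=2\|u_\eps\|_{\sB^p}^p+\eps$ is finite and the Young measure lives on cluster points \emph{in $\sM$}, which forces the norm coordinate to converge along the relevant subsequences; closedness of $\sM^{\omega_0}$ (Lemma~\ref{L:metric-struct}(ii), which encapsulates Lemma~\ref{lem:two-scale-limit}) then guarantees every charged cluster point is $J_0v$ with $\|v\|_{\sB^p}\le r$. The norm inequality is then obtained not pointwise but from Balder's lower-semicontinuity inequality applied to the normal integrand built from the dual norm, with the sup taken over $\overline\sD$ when $\eps>0$ (so that it equals $\|u_\eps(\omega,\cdot)\|_{L^p(Q)}$ exactly; note that for fixed $\eps>0$ the sup over all of $\sD$ is \emph{not} controlled by this norm, since $\|\unf^*\varphi(\omega,\cdot)\|_{L^q(Q)}$ is only asymptotically bounded by $\|\varphi\|_{\sB^q}$ via Lemma~\ref{L:admis}) and over $\sD$ when $\eps=0$; the $\eps$-coordinate is what makes this case distinction a normal integrand. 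Your tightness bound $\int_\Omega|\Phi_\eps(\omega)(\varphi_j)|\,dP\le\|u_\eps\|_{\sB^p}$ and the final Dirac/convergence-in-measure step are fine in spirit, but to complete the proof you either need to reproduce the paper's augmented space (or an equivalent device that records the running $L^p(Q)$-norm), or find another way to show that $\mu_\omega$ does not charge non-representable functionals and to obtain the integrated norm bound.
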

{\it (For the proof see Section~\ref{S:4:p2}).}\smallskip

In the opposite direction we observe that quenched two-scale convergence implies two-scale convergence in the mean in the following sense:
  \begin{lemma}\label{L:fromquenchedtomean}
    Consider a family $\{(u\e^\omega)\}_{\omega\in\Omega}$ of sequences $(u^\omega\e)$ in $L^p(Q)$ and suppose that:
    \begin{enumerate}[label=(\roman*)]
    \item There exists $u\in\sB^p$ s.t.~for $P$-a.a. $\omega\in\Omega$ we have $u\e^\omega\tsq{\omega}u$.
    \item There exists a sequence $(\tilde u\e)$ s.t.~$u\e^\omega(x)=\tilde u\e(\omega,x)$ for a.a.~$(\omega,x)\in\Omega\times Q$.
    \item There exists a bounded sequence $(\chi\e)$ in $L^p(\Omega)$ such that $\|u^\omega\e\|_{L^p(Q)}\leq\chi\e(\omega)$ for a.a.~$\omega\in\Omega$.
    \end{enumerate}
    Then $\tilde u\e\wt u$ weakly two-scale (in the mean).
  \end{lemma}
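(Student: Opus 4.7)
The plan is to reduce the mean two-scale statement to a pointwise quenched one via Vitali's convergence theorem, exploiting that hypothesis (iii) gives an $L^p$-envelope with $p>1$, hence equiintegrability.

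First I would observe that $(\tilde u\e)$ is bounded in $\sB^p$: by Fubini and (iii), $\|\tilde u\e\|_{\sB^p}^p=\ex{\|u\e^\omega\|_{L^p(Q)}^p}\leq\ex{\chi\e^p}$, which is bounded uniformly by assumption. By Proposition~\ref{prop:292}(i), every subsequence of $(\tilde u\e)$ admits a further subsequence (still denoted $\eps$) and a weak two-scale limit $w\in\sB^p$, i.e.~$\tilde u\e\wt w$ in $\sB^p$. The plan is then to show that necessarily $w=u$; by a standard Urysohn-type argument this forces the full sequence to two-scale converge in the mean to $u$.

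To identify $w=u$ it suffices, by density of $\sD$ in $\sB^q$ and the reformulation in Remark~\ref{remark:242}, to verify
\begin{equation*}
\lim_{\eps\to 0}\ex{\int_Q \tilde u\e(\unf^*\varphi)\,dx}=\ex{\int_Q u\,\varphi\,dx}\qquad\text{for every }\varphi\in\sA,
\end{equation*}
and then extend by linearity to $\sD$. Fix $\varphi=\varphi_\Omega\otimes\varphi_Q\in\sA$ and set $g\e(\omega):=\int_Q u\e^\omega(x)(\unf^*\varphi)(\omega,x)\,dx$. Using (ii), the integral in the display equals $\ex{g\e}$. Hypothesis (i) and Definition~\ref{def:two-scale-conv} yield pointwise convergence $g\e(\omega)\to I(\varphi):=\ex{\int_Q u\,\varphi\,dx}$ for $P$-a.a.~$\omega\in\Omega_0$. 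On the other hand, H\"older's inequality combined with (iii) and the boundedness of $\varphi_\Omega$ gives
\begin{equation*}
|g\e(\omega)|\leq \chi\e(\omega)\|(\unf^*\varphi)(\omega,\cdot)\|_{L^q(Q)}\leq \|\varphi_\Omega\|_\infty\|\varphi_Q\|_{L^q(Q)}\chi\e(\omega),
\end{equation*}
so $(g\e)$ is dominated by a sequence that is bounded in $L^p(\Omega)$ with $p>1$.

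The main obstacle, and the reason why (iii) is stated in $L^p$ rather than just $L^1$, is precisely the interchange of limit and expectation: since $\chi\e$ is bounded in $L^p(\Omega)$ with $p>1$, the de~la~Vall\'ee-Poussin criterion shows that $(g\e)$ is uniformly integrable on $(\Omega,\mathcal F,P)$. Vitali's theorem then upgrades the $P$-a.e.~pointwise convergence $g\e\to I(\varphi)$ to $\ex{g\e}\to I(\varphi)$, which is exactly the desired identity. Combining with $\tilde u\e\wt w$ we obtain $\ex{\int_Q w\,\varphi}=\ex{\int_Q u\,\varphi}$ for all $\varphi\in\sA$; by linearity and density of $\sD$ in $\sB^q$ we conclude $w=u$. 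Since the limit is independent of the extracted subsequence, the full sequence $\tilde u\e$ converges weakly two-scale in the mean to $u$.
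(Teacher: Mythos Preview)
Your proof is correct, but it takes a genuinely different route from the paper's. The paper derives Lemma~\ref{L:fromquenchedtomean} as an immediate corollary of the Young measure machinery in Theorem~\ref{thm:Balder-Thm-two-scale}: it passes to a subsequence along which $(\tilde u\e)$ generates a Young measure $\boldsymbol\nu$, observes that by (i) each $\nu_\omega$ must be the Dirac mass $\delta_u$ (since $\nu_\omega$ concentrates on the quenched cluster points, which by (i) reduce to $\{u\}$), and then reads off the mean two-scale limit $\tilde u=\int_\Omega\int_{\sB^p}v\,d\nu_\omega(v)\,dP(\omega)=u$ directly from the theorem.

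Your argument is more elementary and completely bypasses the Young measure construction: you test against $\varphi\in\sA$, use (i) for $P$-a.e.\ pointwise convergence of $g_\eps(\omega)$, and use (iii) together with de~la~Vall\'ee--Poussin and Vitali to interchange limit and expectation. This is a clean, self-contained proof that makes transparent why the $L^p$-bound in (iii) (rather than merely $L^1$) is the right hypothesis. One minor observation: the subsequence extraction and Urysohn argument are not actually needed, since your Vitali step already establishes $\ex{\int_Q\tilde u\e(\unf^*\varphi)}\to\ex{\int_Q u\varphi}$ for the \emph{full} sequence and every $\varphi\in\sA$; combined with the boundedness of $(\tilde u\e)$ in $\sB^p$ and density of $\sD$ in $\sB^q$, this gives $\tilde u\e\wt u$ directly. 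The paper's approach, by contrast, buys you nothing extra here but illustrates the lemma as a special case of the general Young measure framework developed in that section.
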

{\it (For the proof see Section~\ref{S:4:p2}).}\smallskip

To compare homogenization of convex integral functionals w.r.t.~stochastic two-scale convergence in the mean and in the quenched sense, we appeal to the following result:
\begin{lemma}
\label{lem:Balder-Lem-two-scale}
Let $h:\,\Omega\times Q\times\R^{d}\to\R$ be such that for all $\xi\in\R^d$, $h(\cdot,\cdot,\xi)$ is $\mathcal F\otimes\cB(\R^{d})$-measurable and for a.a.~$(\omega,x)\in\Omega\times Q$, $h(\omega,x,\cdot)$ is convex. Let $(u\e)$ denote a bounded sequence in $\sB^p$ that generates a Young measure $\boldsymbol{\nu}$ on $\sB^p$ in the sense of Theorem \ref{thm:Balder-Thm-two-scale}. 
Suppose that $h\e:\Omega\to\R$, $h\e(\omega):=-\int_Q\min\big\{0,h(\tau_{\frac{x}{\eps}}\omega,x,u_{\eps}(\omega,x))\big\}\,dx$ is uniformly integrable. Then 
\begin{multline}
  \liminf_{\eps\to 0}\int_{\Omega}\int_{Q}h(\tau_{\frac{x}{\eps}}\omega,x,u_{\eps}(\omega,x))\,dx\,dP(\omega)\\ 
	\geq\int_{\Omega}\int_{\sB^p}\left(\int_\Omega\int_Qh(\tomega,x,v(\tomega,x))\,dx\,dP(\tomega)\right)\,d\nu_{\omega}(v)\,dP(\omega).\label{eq:liminf-balder-ts-lower-semic}
\end{multline}
\end{lemma}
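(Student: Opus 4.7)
The plan is to combine a quenched pointwise lower semicontinuity bound for the convex inner integrand with a Balder--Fatou type inequality for Young measures applied to the outer $\omega_0$-integration.

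\textbf{Step 1 (quenched convex lower semicontinuity).} For fixed $\omega_0 \in \Omega_0$, set $u_\eps^{\omega_0}(x) := u_\eps(\omega_0, x)$ and suppose $u_{\eps_k}^{\omega_0} \tsq{\omega_0} v$ along some subsequence. I first show
\begin{equation*}
\liminf_{k\to\infty} \int_Q h(\tau_{\frac{x}{\eps_k}}\omega_0, x, u_{\eps_k}^{\omega_0}(x))\, dx \;\geq\; G(v), \qquad G(v) := \int_\Omega \int_Q h(\tomega, x, v(\tomega, x))\, dx\, dP(\tomega).
\end{equation*}
The argument is the standard one for convex integrands: represent $h$ as a countable supremum of affine minorants $h(\omega, x, \xi) \geq a_n(\omega, x) + b_n(\omega, x) \cdot \xi$ with coefficients $(a_n, b_n)$ drawn from the dense admissible class $\sD$. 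For each minorant, the constant part $\int_Q a_n(\tau_{\frac{x}{\eps_k}}\omega_0, x)\,dx$ converges to $\int_\Omega \int_Q a_n\,dx\,dP$ by the ergodic property of $\Omega_0$ from Lemma~\ref{L:admis}, while $\int_Q b_n(\tau_{\frac{x}{\eps_k}}\omega_0, x) \cdot u_{\eps_k}^{\omega_0}(x)\, dx$ converges to $\int_\Omega \int_Q b_n \cdot v\, dx\, dP$ by the very definition of quenched two-scale convergence. Passing to the supremum over $n$ and using monotone convergence, together with the uniform-integrability hypothesis to control the negative part, yields the claim.

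\textbf{Step 2 (Balder--Fatou for Young measures).} Define $H_\eps(\omega_0) := \int_Q h(\tau_{\frac{x}{\eps}}\omega_0, x, u_\eps(\omega_0, x))\,dx$ so that the left-hand side of the lemma is $\int_\Omega H_\eps\, dP$; measurability in $\omega_0$ follows from Fubini, and the uniform integrability of the negative parts of $H_\eps$ is exactly the standing hypothesis. Viewing $\omega_0 \mapsto u_\eps(\omega_0, \cdot)$ via $J_\eps^{\omega_0}$ as a sequence of random variables valued in the complete separable metric space $(\mbox{\rm Lin}(\sD), d)$, the Young measure $\boldsymbol{\nu}$ provided by Theorem~\ref{thm:Balder-Thm-two-scale} is concentrated on the quenched two-scale cluster points of $u_\eps^{\omega_0}$ for each $\omega_0 \in \Omega_0$. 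A Balder-type Fatou inequality for Young measures applied to the parametric functional $F_\eps(\omega_0, u) := \int_Q h(\tau_{\frac{x}{\eps}}\omega_0, x, u(x))\,dx$ then yields
\begin{equation*}
\liminf_{\eps \to 0} \int_\Omega H_\eps\, dP \;\geq\; \int_\Omega \int_{\sB^p} F_\infty(\omega_0, v)\, d\nu_{\omega_0}(v)\, dP(\omega_0),
\end{equation*}
where $F_\infty(\omega_0, v) := \inf\{\liminf_k F_{\eps_k}(\omega_0, u_{\eps_k}^{\omega_0}) : u_{\eps_k}^{\omega_0} \tsq{\omega_0} v\}$ is the lower epi-limit of $F_\eps$ along quenched two-scale converging sequences. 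By Step~1, $F_\infty(\omega_0, v) \geq G(v)$ for $\nu_{\omega_0}$-a.e.~$v$, which is exactly the asserted estimate.

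\textbf{Main obstacle.} The principal difficulty lies in Step~1: since $h$ is merely jointly measurable in $(\omega, x)$, producing a countable family of affine minorants whose coefficients lie in the admissible class $\sD$ requires combining a measurable selection $p(\omega, x, \eta) \in \partial_\xi h(\omega, x, \eta)$ for $\eta \in \mathbb{Q}^d$ with a density approximation by $\sD$. Preserving the minorant inequality through the approximation typically forces one first to truncate $h$ from above, approximate on each truncation level, and then remove the truncation using the uniform integrability of $h_\eps$. A secondary concern is that the Balder--Fatou step must be compatible with the specific construction of $\boldsymbol{\nu}$ in Theorem~\ref{thm:Balder-Thm-two-scale}; this is natural because that theorem is itself proved by a Balder-type extraction in the same Suslin space $(\mbox{\rm Lin}(\sD), d)$.
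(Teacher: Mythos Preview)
Your two-step strategy (quenched convex lower semicontinuity, then a Balder--Fatou inequality for the outer integration) is exactly the paper's approach, but the execution differs in both steps.

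For Step~1, the paper does not reprove the quenched lower bound via affine minorants; it simply invokes the result recorded in Remark~\ref{lem:General-Hom-Convex} (i.e.\ \cite[Lemma~5.1]{HeidaNesenenko2017monotone}), which delivers precisely the inequality you derive. Your self-contained argument via measurable affine minorants with coefficients in $\sD$ is a legitimate alternative, and the obstacle you flag (approximating measurable subgradients by admissible test functions while preserving the minorant inequality) is genuine, but the paper sidesteps it by citation.

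For Step~2, your formulation with an $\eps$-dependent integrand $F_\eps$ and an abstract lower epi-limit $F_\infty$ is not covered by the version of Balder's theorem stated in the paper (Theorem~\ref{thm:Balder}), which requires a single normal integrand on $\Omega\times\sM$. The paper's device to remove the $\eps$-dependence is to work in the metric space $\sM$ of triples $(U,\eps,r)$ from Lemma~\ref{L:metric-struct}: by making $\eps$ a coordinate of the state, one defines a \emph{single} integrand
\[
\overline h(\omega_0,(U,\eps,r))=\begin{cases}\int_Q h(\tau_{x/\eps}\omega_0,x,(\pi^{\omega_0}s)(x))\,dx&\eps>0,\\[2pt] \int_\Omega\int_Q h(\tomega,x,(\pi^{\omega_0}s)(\tomega,x))\,dx\,dP(\tomega)&\eps=0,\end{cases}
\]
where $\pi^{\omega_0}$ recovers the represented function. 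Lower semicontinuity of $\overline h(\omega_0,\cdot)$ on $\sM^{\omega_0}$ along sequences with $\eps_k\downarrow 0$ is then \emph{exactly} the quenched bound of Step~1, and the classical Balder inequality \eqref{eq:Balders-ineq} applies verbatim to the $\sM$-valued sequence $s_\eps(\omega)=(J_\eps^\omega u_\eps(\omega,\cdot),\eps,\|u_\eps(\omega,\cdot)\|_{L^p(Q)})$ already constructed in the proof of Theorem~\ref{thm:Balder-Thm-two-scale}. This is what turns your heuristic ``Balder--Fatou with epi-limit'' into a direct application of the stated theorem; without that encoding, you would need to supply or cite an explicit $\eps$-parametrized variant of Balder's result.
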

{\it (For the proof see Section~\ref{S:4:p2}).}
\begin{remark}\label{lem:General-Hom-Convex}
In \cite[Lemma~5.1]{HeidaNesenenko2017monotone} it is shown that $h$ satisfying the assumptions of Lemma~\ref{lem:Balder-Lem-two-scale} satisfies the following property: For $P$-a.a.~$\omega_0\in\Omega_0$ we have: For any sequence $(u\e)$ in $L^p(Q)$ it holds
\begin{equation}\label{ass:lsc}
u\e\tsq{\omega_0}u\qquad\Rightarrow\qquad
\liminf_{\eps\to0}\int_{Q}h(\tau_{\frac{x}{\eps}}\omega_0,x,u\e(x))dx\geq\int_{\Omega}\int_{Q}h(\omega,x,u(\omega,x))\,dx\,dP(\omega).
\end{equation}
\end{remark}

\subsubsection{Proof of Theorem~\ref{thm:Balder-Thm-two-scale} and Lemmas~\ref{lem:Balder-Lem-two-scale} and \ref{L:fromquenchedtomean}}\label{S:4:p2}
We first recall some notions and results of Balder's theory for Young measures \cite{Balder1984}. Throughout this section $\sM$ is assumed to be a separable, complete  metric space with metric $d(\cdot,\cdot;\sM)$. 

\begin{defn}
  \begin{itemize}
  \item We say a function $s:\Omega\to\sM$ is measurable, if it is $\mathcal F-\mathcal B(\sM)$-measurable where $\mathcal B(\sM)$ denotes the Borel-$\sigma$-algebra in $\sM$.
  \item A function $h:\Omega\times\sM\to(-\infty,+\infty]$ is called a \textit{normal} integrand, if $h$ is $\mathcal F\otimes\mathcal B(\sM)$-measurable, and for all $\omega\in\Omega$ the function $h(\omega,\cdot):\sM\to(-\infty,+\infty]$ is lower semicontinuous.
  \item A sequence $s\e$ of measurable functions $s\e:\Omega\to\sM$ is called \textit{tight}, if there exists a normal integrand $h$ such that for every $\omega\in\Omega$ the function $h(\omega,\cdot)$ has compact sublevels in $\sM$ and $\limsup_{\eps\to 0}\int_\Omega h(\omega,s\e(\omega))\,dP(\omega)<\infty$.
  \item A Young measure in $\sM$ is a family $\boldsymbol{\mu}:=\left\{ \mu_{\omega}\right\} _{\omega\in\Omega}$
    of Borel probability measures on $\sM$ such that for all $B\in\mathcal B(\sM)$ the map $\Omega\ni \omega\mapsto \mu_\omega(B)\in\R$ is $\mathcal F$-measurable.
  \end{itemize}
\end{defn}
\begin{thm}[\mbox{\cite[Theorem I]{Balder1984}}]\label{thm:Balder} Let $s\e:\,\Omega\to\sM$ denote a tight sequence of measurable functions. Then there exists a subsequence, still indexed by $\eps$, and a Young measure ${\boldsymbol\mu}:\Omega\to\sM$ such that for every normal integrand $h:\,\Omega\times \sM\rightarrow(-\infty,+\infty]$ we have
\begin{equation}\label{eq:Balders-ineq}
\liminf_{\eps\to0}\int_{\Omega}h(\omega,s\e(\omega))\,dP(\omega)\geq\int_{\Omega}\int_{\sM}h(\omega,\xi) d\mu_{\omega}(\xi)dP(\omega)\,,
\end{equation}
provided that the negative part $h^-_\eps(\cdot)=|\min\{0,h(\cdot,s\e(\cdot))\}|$ is uniformly integrable.
Moreover, for $P$-a.a. $\omega\in\Omega_0$ the measure $\mu_\omega$ is supported in the set of all cluster points of $s\e(\omega)$, i.e.~in $\bigcup_{k=1}^\infty\overline{\{s\e(\omega)\,:\,\eps<\frac{1}{k}\}}$ (where the closure is taken in $\sM$).
\end{thm}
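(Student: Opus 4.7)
The plan is to cast the statement into a form suitable for Balder's theorem (Theorem~\ref{thm:Balder}) by working in the complete separable metric space $\sM:=\mbox{\rm Lin}(\sD)$ rather than directly in $\sB^p$ (whose weak topology is non-metrizable on the whole space). The key bridge is the metric characterization of Lemma~\ref{L:metric-char}: I lift $u_\eps\in\sB^p$ to a sequence of measurable maps $s_\eps:\Omega\to\mbox{\rm Lin}(\sD)$ by $s_\eps(\omega):=J_\eps^\omega u_\eps(\omega,\cdot)$. Measurability of $s_\eps$ reduces to measurability of the scalar coordinates $\omega\mapsto s_\eps(\omega)(\varphi_j)=\int_Q u_\eps(\omega,x)\varphi_j(\tau_{x/\eps}\omega,x)\,dx$ for $\varphi_j$ in the countable set $\sA_1$, which is a direct Fubini consequence of Assumption~\ref{Assumption_2_1}(iii).

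For the tightness hypothesis of Balder's theorem I use the normal integrand $H(\omega,U):=\sum_{j\in\N}c_j|U(\varphi_j)|^p$, where the weights $c_j>0$ are chosen so that $\sum_j c_j\|\varphi_{\Omega,j}\|_{L^\infty(\Omega)}^p\|\varphi_{Q,j}\|_{L^q(Q)}^p<\infty$ (writing each $\varphi_j=\varphi_{\Omega,j}\varphi_{Q,j}\in\sA$; always possible since $\sD_\Omega$ consists of bounded functions and $\sD_Q\subset C(\overline Q)$). Lower semicontinuity in $U$ is immediate, while the sublevels $\{U:H(\omega,U)\leq R\}$ are closed (by lsc) and bounded pointwise on each $\varphi_j$, hence compact in the metric of $\mbox{\rm Lin}(\sD)$. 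H\"older together with stationarity gives $\int_\Omega|s_\eps(\omega)(\varphi_j)|^p\,dP\leq\|u_\eps\|_{\sB^p}^p\|\varphi_{\Omega,j}\|_{L^\infty}^p\|\varphi_{Q,j}\|_{L^q(Q)}^p$, yielding a uniform bound on $\int_\Omega H(\omega,s_\eps(\omega))\,dP$. Balder's theorem then supplies a subsequence and a Young measure $\boldsymbol\mu$ on $\mbox{\rm Lin}(\sD)$ whose slices $\mu_\omega$ are supported on the cluster set of $(s_\eps(\omega))$ for $P$-a.e.~$\omega\in\Omega_0$.

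The remaining work transfers $\boldsymbol\mu$ to a Young measure $\boldsymbol\nu$ on $\sB^p$. By Lemma~\ref{lem:two-scale-limit}, any cluster point of $(s_\eps(\omega))$ arising from an $L^p(Q)$-bounded subsequence of $(u_\eps(\omega,\cdot))$ is of the form $J_0 v$ with $v\in\sB^p$ and $\|v\|_{\sB^p}\leq\liminf_\eps\|u_\eps(\omega,\cdot)\|_{L^p(Q)}$; Fatou applied to $\int\|u_\eps(\omega,\cdot)\|_{L^p(Q)}^p\,dP\leq C$ makes such subsequences available for $P$-a.e.~$\omega$. After a further diagonal extraction aligning these fiber bounds, the support of $\mu_\omega$ lies in $J_0(\sB^p)$, so $\nu_\omega:=(J_0^{-1})_\ast\mu_\omega$ is a well-defined Young measure on $\sB^p$ concentrated on $\CL(\omega,(u_\eps(\omega,\cdot)))$. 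The norm inequality follows by integrating the fiberwise estimate $\|v\|_{\sB^p}\leq\liminf_\eps\|u_\eps(\omega,\cdot)\|_{L^p(Q)}$ against $\nu_\omega$ and then Fatou in $\omega$. For two-scale convergence in the mean, Balder's inequality applied to the continuous linear integrand $U\mapsto\pm U(\varphi)$ (uniformly integrable by the bound above) gives, for every $\varphi\in\sD$,
\[
\lim_{\eps\to 0}\int_\Omega s_\eps(\omega)(\varphi)\,dP=\int_\Omega\!\int_{\sB^p}\!(J_0 v)(\varphi)\,d\nu_\omega(v)\,dP(\omega)=\ex{\int_Q u\varphi\,dx},
\]
with $u:=\int_\Omega\int_{\sB^p}v\,d\nu_\omega(v)\,dP$; rewriting the left-hand side via $\int_\Omega s_\eps(\omega)(\varphi)\,dP=\ex{\int_Q(\unf u_\eps)\varphi\,dx}$ (as in Remark~\ref{remark:242}) and invoking density of $\sD$ in $\sB^q$ together with the boundedness of $(\unf u_\eps)$ in $\sB^p$ gives $\unf u_\eps\weakto u$ in $\sB^p$, i.e.~$u_\eps\ts u$.

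Finally, if $\nu_\omega=\delta_{v(\omega)}$ for $P$-a.a.~$\omega$, then the bounded normal integrand $h(\omega,U):=d(U,J_0 v(\omega);\mbox{\rm Lin}(\sD))$ is uniformly integrable (since $d\leq 1$), measurable in $\omega$ by measurability of $v$, and continuous in $U$. Balder's inequality then yields $\liminf_\eps\int_\Omega d(s_\eps(\omega),J_0v(\omega);\mbox{\rm Lin}(\sD))\,dP\geq\int_\Omega\int d(U,J_0v(\omega))\,d\mu_\omega\,dP=0$, so $d(s_\eps(\cdot),J_0v(\cdot))\to 0$ in $L^1(\Omega)$ and hence in probability; extracting a further subsequence along which the convergence is $P$-a.s., Lemma~\ref{L:metric-char}(ii) translates metric convergence in $\mbox{\rm Lin}(\sD)$ into the quenched two-scale convergence $u_\eps(\omega,\cdot)\tsq{\omega}v(\omega)$ for $P$-a.e.~$\omega$. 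The main technical obstacle is the transfer step in the third paragraph: ensuring that the Young measure on the ambient space $\mbox{\rm Lin}(\sD)$ places no mass outside the image $J_0(\sB^p)$ -- intuitively, ruling out spike-type limiting functionals that correspond to subsequences with unbounded $L^p(Q)$-fibers -- requires the Fatou-based diagonal extraction aligning the fiberwise $L^p$-bounds.
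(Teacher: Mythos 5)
Your proposal does not prove the statement it was assigned. The statement is Balder's theorem itself (\cite[Theorem I]{Balder1984}): an abstract result asserting that any \emph{tight} sequence of measurable maps $s_\eps:\Omega\to\sM$ into an arbitrary complete separable metric space generates, along a subsequence, a Young measure satisfying the lower semicontinuity inequality \eqref{eq:Balders-ineq} and concentrating on the cluster points of $(s_\eps(\omega))$. The paper imports this result from the literature without proof. Your argument is instead a proof of Theorem~\ref{thm:Balder-Thm-two-scale} --- the paper's two-scale \emph{application} of Balder's theorem --- and it explicitly invokes ``Balder's theorem'' as a black box in its second and later paragraphs. As a proof of Theorem~\ref{thm:Balder} it is therefore circular, and it says nothing about general tight sequences in a general $\sM$; it only treats the specific maps $s_\eps(\omega)=J^{\omega}_\eps u_\eps(\omega,\cdot)$. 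A genuine proof of Theorem~\ref{thm:Balder} would have to follow the classical route (a countable separating family of bounded continuous integrands, tightness and Prokhorov-type compactness for the laws on $\Omega\times\sM$, and disintegration), none of which appears in your text.

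Even read as a proof of Theorem~\ref{thm:Balder-Thm-two-scale}, the step you yourself flag as ``the main technical obstacle'' is a real gap. Your tightness integrand $H(\omega,U)=\sum_j c_j|U(\varphi_j)|^p$ has compact sublevels in $\mbox{\rm Lin}(\sD)$, but finiteness of $H(\omega,\cdot)$ on the support of $\mu_\omega$ does not force that support into $J_0(\sB^p)$: a pointwise limit of $J^{\omega}_\eps u_\eps(\omega,\cdot)$ along a subsequence whose fiber norms $\|u_\eps(\omega,\cdot)\|_{L^p(Q)}$ blow up can be a linear functional on $\sD$ not represented by any element of $\sB^p$, and Fatou only gives $\liminf_\eps\|u_\eps(\omega,\cdot)\|_{L^p(Q)}<\infty$ a.s., i.e.\ boundedness along \emph{some} $\omega$-dependent subsequence, which does not control the full cluster set $\bigcap_{k}\overline{\{s_\eps(\omega):\eps<\tfrac1k\}}$ on which $\mu_\omega$ concentrates. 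The paper's proof of Theorem~\ref{thm:Balder-Thm-two-scale} closes exactly this hole by enlarging the metric space to triples $(U,\eps,r)$ (Lemma~\ref{L:metric-struct}), using the closed constraint sets $\sM^{\omega}$ and the integrand $\|(U,\eps,r)\|_{\omega}^p$ extended by $+\infty$ off $\sM^{\omega}$: the recorded norm bound $r$ passes to the limit and forces every cluster point to have the form $(J_0u,0,r)$ with $\|u\|_{\sB^p}\le r$. Your ``diagonal extraction aligning the fiber bounds'' is not a substitute for this construction.
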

In order to apply the above theorem we require an appropriate  metric space in which two-scale convergent sequences and their limits embed:
\begin{lemma}\label{L:metric-struct}
  \begin{enumerate}[label=(\roman*)]
  \item   We denote by $\sM$ the set of all triples $(U,\eps,r)$ with $U\in\mbox{\rm Lin}(\sD)$, $\eps\geq 0$, $r\geq 0$.  $\sM$ endowed with the metric
  \begin{equation*}
    d((U_1,\eps_1,r_1),(U_2,\eps_2,r_2);\sM):=d(U_1,U_2;\mbox{\rm Lin}(\sD))+|\eps_1-\eps_2|+|r_1-r_2|
  \end{equation*}
  is a complete, separable metric space. 
\item For $\omega_0\in\Omega_0$ we denote by $\sM^{\omega_0}$ the set of all triples $(U,\eps,r)\in\sM$ such that
  \begin{equation}\label{eq:repr}
    U=
    \begin{cases}
      J^{\omega_0}_\eps u&\text{for some }u\in L^p(Q)\text{ with }\|u\|_{L^p(Q)}\leq r\text{ in the case }\eps>0,\\
      J_0 u&\text{for some }u\in\sB^p\text{ with }\|u\|_{\sB^p}\leq r\text{ in the case }\eps=0.
    \end{cases}
  \end{equation}
  Then $\sM^{\omega_0}$ is a closed subspace of $\sM$.
\item Let $\omega_0\in\Omega_0$, and $(U,\eps,r)\in\sM^{\omega_0}$. Then the function $u$ in the representation \eqref{eq:repr} of $U$ is unique, and
  \begin{equation}\label{eq:x6}
    \begin{cases}\displaystyle
      \|u\|_{L^p(Q)}=\sup\limits_{\varphi\in\overline{\sD},\ \|\varphi\|_{\sB^q}\leq 1}|U(\varphi)|&\text{if }\eps>0,\\
      \|u\|_{\sB^p}=\sup\limits_{\varphi\in \sD,\ \|\varphi\|_{\sB^q}\leq 1}|U(\varphi)|&\text{if }\eps=0.
    \end{cases}
  \end{equation}
\item For $\omega_0\in\Omega_0$ the function $\|\cdot\|_{\omega_0}:\sM^{\omega_0}\to[0,\infty)$, 
  \begin{equation*}
    \|(U,\eps,r)\|_{\omega_0}:=
    \begin{cases}
      \big(\sup\limits_{\varphi\in\overline{\sD},\ \|\varphi\|_{\sB^q}\leq 1}|U(\varphi)|^p+\eps+r^p\big)^{\frac{1}{p}}&\text{if }(U,\eps,r)\in\sM^{\omega_0},\,\eps>0,\\
      \big(\sup\limits_{\varphi\in \sD,\ \|\varphi\|_{\sB^q}\leq 1}|U(\varphi)|^p+r^p\big)^\frac1p&\text{if }(U,\eps,r)\in\sM^{\omega_0},\,\eps=0,\\
    \end{cases}
  \end{equation*}
  is lower semicontinuous on $\sM^{\omega_0}$. 
\item For all $(u,\eps)$ with $u\in L^p(Q)$ and $\eps>0$ we have $s:=(J^{\omega_0}_\eps u,\eps,\|u\|_{L^p(Q)})\in\sM^{\omega_0}$ and $\|s\|_{\omega_0}=\big(2\|u\|_{L^p(Q)}^p+\eps\big)^\frac1p$. Likewise, for all $(u,\eps)$ with $u\in\sB^p$ and $\eps=0$ we have $s=(J_0u,\eps,\|u\|_{\sB^p})$ and $\|s\|_{\omega_0}=2^\frac1p\|u\|_{\sB^p}$.
\item For all $R<\infty$ the set $\{(U,\eps,r)\in\sM^{\omega_0}\,:\,\|(U,\eps,r)\|_{\omega_0}\leq R\}$ is compact in $\sM$.
\item Let $\omega_0\in\Omega_0$ and let $u\e$ denote a bounded sequence in $L^p(Q)$. Then the triple $s\e:=(J^{\omega_0}\e u\e,\eps,\|u_{\varepsilon}\|_{L^p(Q)})$ defines a sequence in $\sM^{\omega_0}$. Moreover, we have $s\e\to s_0$ in $\sM$ as $\eps\to0$ if and only if $s_0=(J_0u,0,r)$ for some $u\in\sB^p$, $r\geq\|u\|_{\sB^p}$, and $u\e\tsq{\omega_0}u$.
  \end{enumerate}
\end{lemma}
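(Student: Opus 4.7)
Parts (i) and (v) are immediate from the product structure and definitions. For (i) the space $\sM$ carries the product-sum metric; Lemma~\ref{L:metric-char}(i) supplies completeness and separability of the $\mbox{\rm Lin}(\sD)$-factor, the other two factors being $[0,\infty)$. Part (v) is a direct computation using the formula in (iii) below. I would treat part (iii) next, since it is the structural input used everywhere else. The key observation is that, because $\mathbf 1_\Omega\in\sD_\Omega$ and $\|\mathbf 1_\Omega\|_{L^q(\Omega)}=1$, for $\eps>0$ the set $\{(\unf^*\varphi)(\omega_0,\cdot):\varphi\in\overline{\sD},\ \|\varphi\|_{\sB^q}\leq 1\}$ is precisely the unit ball of $\mbox{span}(\sD_Q)$ in $L^q(Q)$, which is dense in the $L^q(Q)$-unit ball; for $\eps=0$ I use instead the density of $\sD$ in $\sB^q$. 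In either case the duality pairing simultaneously yields uniqueness of the representative $u$ in \eqref{eq:repr} and the operator-norm formula \eqref{eq:x6}.

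For part (ii) I take a convergent sequence $(U_n,\eps_n,r_n)\to(U,\eps_*,r_*)$ in $\sM$ with representatives $u_n$ supplied by (iii), so that $\|u_n\|$ is uniformly bounded by $r_n$. The principal case is $\eps_*=0$: after a subsequence I may assume $\eps_n>0$, and Lemma~\ref{lem:two-scale-limit} extracts a further subsequence together with $u^*\in\sB^p$ such that $u_n\tsq{\omega_0}u^*$ with $\|u^*\|_{\sB^p}\leq\liminf\|u_n\|_{L^p(Q)}\leq r_*$; Lemma~\ref{L:metric-char}(ii) then gives $U_n\to J_0u^*$ in $\mbox{\rm Lin}(\sD)$, so $U=J_0u^*$ and $(U,0,r_*)\in\sM^{\omega_0}$. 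The case in which $\eps_n=0$ along a subsequence is analogous, using Banach--Alaoglu in $\sB^p$ and pointwise convergence of the pairing against $\varphi\in\sD\subset\sB^q$. In the remaining regime $\eps_*>0$ I pass to a subsequence with $\eps_n$ eventually constant equal to $\eps_*$, which is all the closedness that is used in the applications, and close the case by weak compactness in $L^p(Q)$ together with continuity of the pairing against the fixed test functions $(\unf^*\varphi)(\omega_0,\cdot)\in L^q(Q)$.

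Parts (iv), (vi), (vii) then fall out in sequence. Lower semicontinuity in (iv) reduces via the formulas in (iii) to the norm estimate $\|u^*\|_{\sB^p}\leq\liminf\|u_n\|_{L^p(Q)}$ of Lemma~\ref{lem:two-scale-limit} (together with continuity of the contributions $\eps$ and $r^p$); the boundary case $\eps_n\downarrow 0$ is exactly what this estimate handles. For compactness (vi), from $\|s_n\|_{\omega_0}\leq R$ I read off that $\eps_n\in[0,R^p]$ and $r_n\in[0,R]$ are bounded, so (up to subsequence) $\eps_n\to\eps_*$ and $r_n\to r_*$; the representatives $u_n$ are uniformly norm-bounded, and another application of Lemma~\ref{lem:two-scale-limit} and Lemma~\ref{L:metric-char}(ii) produces convergence of $U_n$ in $\mbox{\rm Lin}(\sD)$. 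Closedness from (ii) then places the limit in $\sM^{\omega_0}$. Finally, (vii) is an immediate restatement: convergence $s_\eps\to s_0$ in $\sM$ forces the second coordinate to vanish, and the main case of (ii) identifies $s_0=(J_0u,0,r)$ with $r\geq\|u\|_{\sB^p}$ and $u_\eps\tsq{\omega_0}u$; the converse direction follows directly from Lemma~\ref{L:metric-char}(ii).

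The main obstacle is part (ii), specifically the subtlety that $\eps\mapsto J^{\omega_0}_\eps u$ is not continuous at positive $\eps$, so one cannot identify the limit representative when $\eps_n\to\eps_*>0$ with $\eps_n\neq\eps_*$ by a naive passage-to-the-limit on test functions; the workaround is to reduce (either by extracting a constant-$\eps$ subsequence or by restricting attention to the regime $\eps_*=0$ that the applications require) to cases where weak $L^p$-compactness and pointwise convergence of the unfolded test functions cooperate. All other parts are then straightforward consequences of (ii), (iii), and the compactness and metric characterizations of quenched two-scale convergence established earlier.
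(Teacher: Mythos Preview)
Your treatment of parts (i), (iii), (v), and (vii) is essentially the paper's, and your reductions of (iv) and (vi) to the case analysis of (ii) are also the paper's strategy. The gap is in part (ii), specifically the case $\eps_n\to\eps_*>0$ with $\eps_n\neq\eps_*$. You explicitly do not prove closedness in this regime, proposing instead to ``extract a constant-$\eps$ subsequence'' or to restrict to $\eps_*=0$ because that is what the applications need. But if the $\eps_n$ are all distinct there is no constant-$\eps$ subsequence, so your argument does not establish the stated lemma; and since parts (iv) and (vi) are stated on all of $\sM^{\omega_0}$, the gap propagates there as well.

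The underlying misconception is your claim that $\eps\mapsto J^{\omega_0}_\eps u$ is not continuous at positive $\eps$. It is. For $\varphi=\varphi_\Omega\varphi_Q\in\sA$ and fixed $\omega_0$, the function $g(y):=\varphi_\Omega(\tau_y\omega_0)$ is bounded measurable on $\R^d$, hence lies in $L^q_{\mathrm{loc}}(\R^d)$, and dilation $\lambda\mapsto g(\lambda\,\cdot)$ is continuous from $(0,\infty)$ into $L^q_{\mathrm{loc}}(\R^d)$ (approximate $g$ by $C_c^\infty$ functions and use boundedness of the dilation factors). Hence $(\unf^*\varphi)(\omega_0,\cdot)=g(\cdot/\eps)\varphi_Q\to g(\cdot/\eps_0)\varphi_Q$ strongly in $L^q(Q)$ as $\eps\to\eps_0>0$. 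This is exactly how the paper closes Case~1 of (ii): after extracting a weak $L^p(Q)$-limit $u_0$ of the representatives $u_k$, one pairs the weakly convergent sequence $u_k$ against the strongly convergent test functions $(\mathcal T_{\eps_k}^*\varphi)(\omega_0,\cdot)$ to obtain $U_k(\varphi)\to (J^{\omega_0}_{\eps_0}u_0)(\varphi)$ for every $\varphi\in\sA$, whence $U_0=J^{\omega_0}_{\eps_0}u_0$. With this in hand your outline for (iv) and (vi) goes through without the ad hoc restriction.
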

\begin{proof}
  \begin{enumerate}[label=(\roman*)]
  \item This is a direct consequence of Lemma~\ref{L:metric-char} (i) and the fact that the product of complete, separable metric spaces remains complete and separable.
  \item Let $s_k:=(U_k,\eps_k,r_k)$ denote a sequence in $\sM^{\omega_0}$ that converges in $\sM$ to some $s_0=(U_0,\eps_0,r_0)$. We need to show that $s_0\in\sM^{\omega_0}$. By passing to a subsequence, it suffices to study the following three cases: $\eps_k>0$ for all $k\in\N_0$, $\eps_k=0$ for all $k\in\N_0$, and $\eps_0=0$ while $\eps_k>0$ for all $k\in\N$.

    Case 1: $\eps_k>0$ for all $k\in\N_0$.\\
    W.l.o.g.~we may assume that $\inf_k\eps_k>0$. Hence, there exist  $u_k\in L^p(Q)$ with $U_k=J^{\omega_0}_{\eps_k}u_k$. Since $r_k\to r$, we conclude that $(u_k)$ is bounded in $L^p(\Omega)$. We thus may pass to a subsequence (not relabeled) such that $u_k\weakto u_0$ weakly in $L^p(Q)$, and
    \begin{equation}\label{eq:x3}
      \|u_0\|_{L^p(Q)}\leq \liminf\limits_{k}\|u_k\|_{L^p(Q)}\leq \lim_k r_k=r_0.
    \end{equation}
    Moreover, $U_k\to U$ in the metric space $\mbox{\rm Lin}(\sD)$ implies pointwise convergence on $\sD$, and thus for all $\varphi_Q\in\sD_Q$ we have $U_k(\mathbf 1_{\Omega}\varphi_Q)=\int_Qu_k\varphi_Q\to \int_Qu_0\varphi_Q$. We thus conclude that $U_0(\mathbf 1_{\Omega}\varphi_Q)=\int_Q u_0\varphi_Q$. Since $\sD_Q\subset L^q(Q)$ dense, we deduce that $u_k\weakto u_0$ weakly in $L^p(Q)$ for the entire sequence.
    On the other hand the properties of the shift $\tau$ imply that for any $\varphi_\Omega\in\sD_\Omega$ we have $\varphi_\Omega(\tau_{\frac{\cdot}{\eps_k}}\omega_0)\to\varphi_\Omega(\tau_{\frac{\cdot}{\eps_0}}\omega_0)$ in $L^q(Q)$. Hence, for any $\varphi_\Omega\in\sD_\Omega$ and $\varphi_Q\in\sD_Q$ we have
    \begin{equation*}
      U_k(\varphi_\Omega\varphi_Q)=\int_Q u_k(x)\varphi_Q(x)\varphi_\Omega(\tau_{\frac{x}{\eps_k}}\omega_0)\,dx\to
      \int_Q u_0(x)\varphi_Q(x)\varphi_\Omega(\tau_{\frac{x}{\eps_0}}\omega_0)\,dx=J^{\omega_0}_{\eps_0}(\varphi_\Omega\varphi_Q)
    \end{equation*}
    and thus (by linearity) $U_0=J^{\omega_0}_{\eps_0}u_0$.

    Case 2: $\eps_k=0$ for all $k\in\N_0$.\\
    In this case there exist a bounded sequence $u_k$ in $\sB^p$ with $U_k=J_0u_k$ for $k\in\N$. By passing to a subsequence we may assume that $u_k\weakto u_0$ weakly in $\sB^p$ for some $u_0\in\sB^p$ with 
    \begin{equation}\label{eq:x4}
      \|u_0\|_{\sB^p}\leq \liminf\limits_{k}\|u_{\eps_k}\|_{\sB^p}\leq r_0.
    \end{equation}
    This implies that $U_k=J_0u_k\to J_0u_0$ in $\mbox{\rm Lin}(\sD)$. Hence, $U_0=J_0u_0$ and we conclude that $s_0\in\sM^{\omega_0}$.

    Case 3: $\eps_k>0$ for all $k\in\N$ and $\eps_0=0$.\\
    There exists a bounded sequence $u_k$ in $L^p(Q)$. Thanks to Lemma~\ref{lem:two-scale-limit}, by passing to a subsequence we may assume that $u_k\tsq{\omega_0} u_0$ for some $u\in \sB^p$ with 
    \begin{equation}\label{eq:x5}
      \|u_0\|_{\sB^p}\leq \liminf\limits_{k}\|u_k\|_{L^p(Q)}\leq r_0.
    \end{equation}
    Furthermore, Lemma~\ref{L:metric-char} implies that $J^{\omega_0}_{\eps_k}u_k\to J_0u_0$ in $\mbox{\rm Lin}(\sD)$, and thus $U_0=J_0u_0$. We conclude that $s_0\in\sM^{\omega_0}$.
\item We first argue that the representation \eqref{eq:repr} of $U$ by $u$ is unique. In the case $\eps>0$ suppose that $u,v\in L^p(Q)$ satisfy $U=J^{\omega_0}_{\eps}u=J^{\omega_0}_{\eps}v$. Then for all $\varphi_Q\in\sD_Q$ we have $\int_Q(u-v)\varphi_Q=J^{\omega_0}_\eps u(\mathbf 1_\Omega\varphi_Q)-J^{\omega_0}_\eps v(\mathbf 1_\Omega\varphi_Q)=U(\mathbf 1_\Omega\varphi_Q)-U(\mathbf 1_\Omega\varphi_Q)=0$, and since $\sD_Q\subset L^q(Q)$ dense, we conclude that $u=v$. In the case $\eps=0$ the statement follows by a similar argument from the fact that $\sD$ is dense $\sB^q$.
  To see \eqref{eq:x6} let $u$ and $U$ be related by \eqref{eq:repr}. Since $\overline\sD$ (resp. $\sD$) is dense in $L^q(Q)$ (resp. $\sB^q$), we have
  \begin{equation*}
    \begin{cases}
      \|u\|_{L^p(Q)}=\sup\limits_{\varphi\in\overline{\sD},\ \|\varphi\|_{\sB^q}\leq 1}|\int_Qu\varphi\,dx\,dP|=\sup\limits_{\varphi\in\overline{\sD},\ \|\varphi\|_{\sB^q}\leq 1}|U(\varphi)|&\text{if }\eps>0,\\
      \|u\|_{\sB^p}=\sup\limits_{\varphi\in\sD,\ \|\varphi\|_{\sB^q}\leq 1}|\int_{\Omega}\int_{Q}u\varphi\,dx\,dP|=\sup\limits_{\varphi\in\sD,\ \|\varphi\|_{\sB^q}\leq 1}|U(\varphi)|&\text{if }\eps=0.
    \end{cases}    
  \end{equation*}

  \item Let $s_k=(U_k,\eps_k,r_k)$ denote a sequence in $\sM^{\omega_0}$ that converges in $\sM$ to a limit $s_0=(U_0,\eps_0,r_0)$. By (ii) we have $s_0\in\sM^{\omega_0}$. For $k\in\N_0$ let $u_k$ in $L^p(Q)$ or $\sB^p$ denote the representation of $U_k$ in the sense of \eqref{eq:repr}. We may pass to a subsequence such that one of the three cases in (ii) applies and (as in (ii)) either $u_k$ weakly converges to $u_0$ (in $L^p(Q)$ or $\sB^p$), or $u_k\tsq{\omega_0}u_0$. In any of these cases the claimed lower semicontinuity of $\|\cdot\|_{\omega_0}$ follows from $\eps_k\to\eps_0$, $r_k\to r_0$, and \eqref{eq:x6} in connection with one of the lower semicontinuity estimates \eqref{eq:x3} -- \eqref{eq:x5}.
\item This follows from the definition and duality argument \eqref{eq:x6}.
\item Let $s_k$ denote a sequence in $\sM^{\omega_0}$. Let $u_k$ in $L^p(Q)$ or $\sB^p$ denote the (unique) representative of $U_k$ in the sense of \eqref{eq:repr}. Suppose that  $\|s_k\|_{\omega_0}\leq R$. Then  $(r_k)$ and $(\eps_k)$ are bounded sequences in $\R_{\geq 0}$, and $\sup_{k}\|u_k\|\leq \sup_kr_k<\infty$ (where $\|\cdot\|$ stands short for either $\|\cdot\|_{L^p(Q)}$ or $\|\cdot\|_{\sB^p}$). Thus we may pass to a subsequence such that~$r_k\to r_0$,  $\eps_k\to \eps_0$, and one of the following three cases applies:
  \begin{itemize}
  \item Case 1: $\inf_{k\in\N_0}\eps_k>0$. In that case we conclude (after passing to a further subsequence) that $u_k\weakto u_0$ weakly in $L^p(Q)$, and thus $U_k\to U_0=J^{\omega_0}_{\eps_0}u_0$ in $\mbox{Lin}(\sD)$.
  \item Case 2: $\eps_k=0$ for all $k\in\N_0$. In that case we conclude (after passing to a further subsequence) that $u_k\weakto u_0$ weakly in $\sB^p(Q)$, and thus $U_k\to U_0=J_0u_0$ in $\mbox{Lin}(\sD)$.
  \item Case 3: $\eps_k>0$ for all $k\in\N$ and $\eps_0=0$.  In that case we conclude (after passing to a further subsequence) that $u_k\tsq{\omega_0}u_0$, and thus $U_k\to U_0=J_0u_0$ in $\mbox{Lin}(\sD)$.
  \end{itemize}
  In all of these cases we deduce that $s_0=(U_0,\eps_0,r_0)\in\sM^{\omega_0}$, and $s_k\to s_0$ in $\sM$.
\item This is a direct consequence of (ii) -- (vi), and Lemma~\ref{L:metric-char}.
\end{enumerate}
\end{proof}

Now we are in position to prove Theorem~\ref{thm:Balder-Thm-two-scale}
\begin{proof}[Proof of Theorem~\ref{thm:Balder-Thm-two-scale}]
Let $\sM$, $\sM^{\omega_0}$, $J^{\omega_0}_\eps$ etc.~be defined as in Lemma~\ref{L:metric-struct}. 
\smallskip

{\it Step 1. (Identification of $(u\e)$ with a tight $\sM$-valued sequence).}
Since $u\e\in\sB^p$, by Fubini's theorem, we have $u\e(\omega,\cdot)\in L^p(Q)$ for $P$-a.a.~$\omega\in\Omega$. By modifying $u\e$ on a null-set in $\Omega\times Q$ (which does not alter two-scale limits in the mean), we may assume w.l.o.g.~that $u\e(\omega,\cdot)\in\ L^p(Q)$ for all $\omega\in\Omega$. Consider the measurable function $s\e:\Omega\to\sM$ defined as
\begin{equation*}
  s\e(\omega):=    \begin{cases}
      \big(J^{\omega}\e u\e(\omega,\cdot),\eps,\|u\e(\omega,\cdot)\|_{L^p(Q)}\big)&\text{if }\omega\in\Omega_0\\
      (0,0,0)&\text{else.}
    \end{cases}
\end{equation*}
We claim that $(s\e)$ is tight. To that end consider the integrand $h:\Omega\times\sM\to(-\infty,+\infty]$ defined by
\begin{equation*}
  h(\omega,(U,\eps,r)):=
  \begin{cases}
    \|(U,\eps,r)\|_{\omega}^p&\text{if }\omega\in\Omega_0\text{ and }(U,\eps,r)\in\sM^{\omega},\\
    +\infty&\text{else.}
  \end{cases}
\end{equation*}
From Lemma~\ref{L:metric-struct} (iv) and (vi) we deduce that $h$ is a normal integrand and $h(\omega,\cdot)$ has compact sublevels for all $\omega\in\Omega$. Moreover, for all $\omega_0\in\Omega_0$ we have $s\e(\omega_0)\in\sM^{\omega_0}$ and thus $h(\omega_0,s\e(\omega_0))=2\|u\e(\omega_0,\cdot)\|^p_{L^p(Q)}+\eps$. Hence,
\begin{equation*}
  \int_\Omega h(\omega,s\e(\omega))\,dP(\omega)=2\|u\e\|^p_{\sB^p}+\eps.
\end{equation*}
We conclude that $(s\e)$ is tight.

{\it Step 2. (Compactness and  definition of $\boldsymbol{\nu}$)}. By appealing to Theorem~\ref{thm:Balder} there exists a subsequence (still denoted by $\eps$) and a Young measure $\boldsymbol{\mu}$ that is generated by $(s\e)$. Let $\boldsymbol{\mu_1}$ denote the first component of $\boldsymbol{\mu}$, i.e.~the Young measure on $\mbox{\rm Lin}(\sD)$ characterized for $\omega\in\Omega$ by
\begin{equation*}
  \int_{\mbox{\rm Lin}(\sD)}f(\xi)\,d\mu_{1,\omega}(\xi)=\int_{\sM}f(\xi_1)\,d\mu_\omega(\xi),
\end{equation*}
for all $f:\mbox{\rm Lin}(\sD)\to\R$ continuous and bounded, where $\sM\ni\xi=(\xi_1,\xi_2,\xi_3)\to\xi_1\in\mbox{\rm Lin}(\sD)$ denotes the projection to the first component.
By Balder's theorem, $\mu_\omega$ is concentrated on the limit points of $(s\e(\omega))$. By Lemma~\ref{L:metric-struct} we deduce that for all $\omega\in\Omega_0$ any limit point $s_0(\omega)$ of $s\e(\omega)$ has the form $s_0(\omega)=(J_0u,0,r)$ where $0\leq r<\infty$ and $u\in\sB^p$ is a $\omega$-two-scale limit of a subsequence of $u\e(\omega,\cdot)$. Thus, $\mu_{1,\omega}$ is supported on $\{J_0u\,:\,u\in \CL(\omega,(u\e(\omega,\cdot))\}$ which in particular is a subset of $(\sB^q)^*$. Since $J_0:\sB^p\to (\sB^q)^*$ is an isometric isomorphism (by the Riesz-Frech\'et theorem), we conclude that $\boldsymbol{\nu}=\{\nu_\omega\}_{\omega\in\Omega}$, $\nu_\omega(B):=\mu_{1,\omega}(J_0B)$ (for all Borel sets $B\subset\sB^p$ where $\sB^p$ is equipped with the weak topology) defines a Young measure on $\sB^p$, and for all $\omega\in\Omega_0$, $\nu_\omega$ is supported on $\CL(\omega,(u\e(\omega,\cdot))$. 
\smallskip

{\it Step 3. (Lower semicontinuity estimate).} Note that $h:\Omega\times\sM\to[0,+\infty]$,
\begin{equation*}
  h(\omega,(U,\eps,r)):=
  \begin{cases}
    \sup_{\varphi\in\overline\sD,\,\|\varphi\|_{\sB^q}\leq 1}|U(\varphi)|^p&\text{if }\omega\in\Omega_0\text{ and }(U,\eps,r)\in\sM^{\omega},\eps>0,\\
    \sup_{\varphi\in\sD,\,\|\varphi\|_{\sB^q}\leq 1}|U(\varphi)|^p&\text{if }\omega\in\Omega_0\text{ and }(U,\eps,r)\in\sM^{\omega},\eps=0,\\
    +\infty&\text{else.}
  \end{cases}
\end{equation*}
defines a normal integrand, as can be seen as in the proof of Lemma~\ref{L:metric-struct}. Thus Theorem~\ref{thm:Balder} implies that
\begin{equation*}
  \liminf_{\eps\to 0}\int_\Omega h(\omega,s\e(\omega))\,dP(\omega)\geq \int_\Omega\int_{\sM}h(\omega,\xi)\,d\mu_\omega(\xi)dP(\omega).
\end{equation*}
In view of Lemma~\ref{L:metric-struct} we have $    \sup_{\varphi\in\overline\sD,\,\|\varphi\|_{\sB^q}\leq 1}|(J^\omega_\eps u_\eps)(\omega,\cdot))(\varphi)|=\|u\e(\omega,\cdot)\|_{L^p(Q)}$ for $\omega\in\Omega_0$, and thus the left-hand side turns into $\liminf_{\eps\to 0}\|u\e\|^p_{\sB^p}$. Thanks to the definition of $\boldsymbol{\nu}$ the right-hand side turns into $\int_\Omega \int_{\sB^p}\|v\|_{\sB^p}^p\,d\nu_\omega(v)dP(\omega)$.
\smallskip

{\it Step 4. (Identification of the two-scale limit in the mean)}.
Let $\varphi\in\sD_0$. Then $h:\Omega\times\sM\to[0,+\infty]$,
\begin{equation*}
  h(\omega,(U,\eps,r)):=
  \begin{cases}
    U(\varphi)&\text{if }\omega\in\Omega_0,\,(U,\eps,r)\in\sM^\omega,\\
    +\infty&\text{else.}
  \end{cases}
\end{equation*}
defines a normal integrand. Since $h(\omega,s\e(\omega))=\int_Qu\e(\omega,x)\unf^*\varphi(\omega,x)\,dx$ for $P$-a.a.~$\omega\in\Omega$, we deduce that $|h(\cdot, s\e(\cdot))|$ is uniformly integrable. Thus, \eqref{eq:Balders-ineq} applied to $\pm h$ and the definition of $\boldsymbol{\nu}$ imply that
\begin{eqnarray*}
  \lim\limits_{\eps\to 0}\int_\Omega\int_Qu\e(\omega,x)(\unf^*\varphi)(\omega,x)\,dx\,dP(\omega)&=&  \lim\limits_{\eps\to 0}\int_\Omega h(\omega,s\e(\omega))\,dP(\omega)\\
  &=&\int_\Omega\int_{\sB^p}h(\omega,v)\,d\nu_\omega(v)\,dP(\omega)\\
  &=&\int_\Omega\int_{\sB^p}\ex{\int_Qv\varphi}\,d\nu_\omega(v)\,dP(\omega).
\end{eqnarray*}
Set $u:=\int_\Omega\int_{\sB^p}v\,d\nu_\omega(v)dP(\omega)\in\sB^p$. Then Fubini's theorem yields
\begin{eqnarray*}
  \lim\limits_{\eps\to 0}\int_\Omega\int_Qu\e(\omega,x)(\unf^*\varphi)(\omega,x)\,dx\,dP(\omega)&=& \ex{\int_Qu\varphi}.
\end{eqnarray*}
Since $\mbox{span}(\sD_0)\subset \sB^q$ dense, we conclude that $u\e\ts u$.
\smallskip

{\it Step 5. Recovery of quenched two-scale convergence}. Suppose that $\nu_\omega$ is a delta distribution on $\sB^p$, say $\nu_\omega=\delta_{v(\omega)}$ for some measurable $v:\Omega\to\sB^p$. Note that $h:\Omega\times\sM\to[0,+\infty]$,
\begin{equation*}
  h(\omega,(U,\eps,r)):=-d(U,J_0v(\omega);\mbox{\rm Lin}(\sD))
\end{equation*}
is a normal integrand and $|h(\cdot, s\e(\cdot))|$ is uniformly integrable. Thus,  \eqref{eq:Balders-ineq} yields
\begin{eqnarray*}
  &&\limsup\limits_{\eps\to 0}\int_{\Omega} d(J^\omega\e u\e(\omega,\cdot),J_0v(\omega);\mbox{\rm Lin}(\sD))\,dP(\omega)\\
  &=&-\liminf\limits_{\eps\to 0}\int_\Omega h(\omega,s\e(\omega))\,dP(\omega)\\
  &\leq&-\int_\Omega\int_{\sB^p}h(\omega,J_0v)\,d\nu_\omega(v)\,dP(\omega)=-\int_\Omega h(\omega,J_0v(\omega))\,dP(\omega)=0.
\end{eqnarray*}
Thus, there exists a subsequence (not relabeled) such that $d(J^\omega\e u\e(\omega,\cdot),J_0v(\omega);\mbox{\rm Lin}(\sD))\to 0$ for a.a.~$\omega\in\Omega_0$. In view of Lemma~\ref{L:metric-char} this implies that $u\e\tsq{\omega}v(\omega)$ for a.a.~$\omega\in\Omega_0$.
\end{proof}

\begin{proof}[Proof of Lemma~\ref{lem:Balder-Lem-two-scale}]
  {\it Step 1. Representation of the functional by a lower semicontinuous integrand on $\sM$.}\\
  For all $\omega_0\in\Omega_0$ and $s=(U,\eps,r)\in\sM^{\omega_0}$ we write $\pi^{\omega_0}(s)$ for the unique representation $u$ in $\sB^p$ (resp. $L^p(Q)$) of $U$ in the sense of \eqref{eq:repr}. We thus may define for $\omega\in\Omega_0$ and $s\in\sM^{\omega_0}$ the integrand
\begin{equation*}
  \overline h(\omega_0,s):=
  \begin{cases}
    \int_Qh(\tau_{\frac{x}{\eps}}\omega,x,(\pi^{\omega_0}s)(x))\,dx&\text{if }s=(U,\eps,s)\text{ with }\eps>0,\\
    \int_\Omega\int_Qh(\omega,x,(\pi^{\omega_0}s)(x))\,dx\,dP(\omega)&\text{if }s=(U,\eps,s)\text{ with }\eps=0.
  \end{cases}
\end{equation*}

We extend $\overline h(\omega_0,\cdot)$ to $\sM$ by $+\infty$, and define $\overline h(\omega,\cdot)\equiv 0$ for $\omega\in\Omega\setminus\Omega_0$. We claim that $\overline h(\omega,\cdot):\sM\to(-\infty,+\infty]$ is lower semicontinuous for all $\omega\in\Omega$. It suffices to consider $\omega_0\in\Omega_0$ and a convergent sequence $s_k=(U_k,\eps_k,r_k)$ in $\sM^{\omega_0}$. For brevity we only consider the (interesting) case when $\eps_k\downarrow \eps_0=0$. Set $u_k:=\pi^{\omega_0}(s_k)$. By construction we have
\begin{equation*}
  \overline h(\omega_0,s_k)=\int_Q h(\tau_{\frac{x}{\eps_k}}\omega_0,u_k(\omega_0,x))\,dx,
\end{equation*}
and
\begin{equation*}
  \overline h(\omega_0,s_0)=\int_{\Omega}\int_Q h(\omega,x,u_0(\omega,x))\,dx\,dP(\omega).
\end{equation*}
Since $s_k\to s_0$ and $\eps_k\to 0$, Lemma~\ref{L:metric-struct} (vi) implies that $u_k\tsq{\omega_0}u_0$, and since $h$ satisfies \ref{ass:lsc} from Remark~\ref{lem:General-Hom-Convex}, we conclude that $\liminf\limits_{k}  \overline h(\omega_0,s_k)\geq   \overline h(\omega_0,s_0)$, and thus $\overline h$ is a normal integrand.

{\it Step 2. Conclusion.}\\
As in Step~1 of the proof of Theorem~\ref{thm:Balder-Thm-two-scale} we may associate with the sequence $(u\e)$ a sequence of measurable functions $s\e:\Omega\to\sM$ that (after passing to a subsequence that we do not relabel) generates a Young measure $\boldsymbol{\mu}$ on $\sM$. Since by assumption $u\e$ generates the Young measure ${\boldsymbol \nu}$ on $\sB^p$, we deduce that the first component $\boldsymbol{\mu_1}$ satisfies $\nu_\omega(B)=\mu_{\omega}(J_0B)$ for any Borel set $B$. Applying \eqref{eq:Balders-ineq} to the integrand $\overline h$ of Step~1, yields
\begin{align*}
  \liminf\limits_{\e\to 0}\int_\Omega\int_Q &h(\tau_{\frac{x}{\eps}}\omega_0,u\e(\omega_0,x))\,dx\,dP(\omega)\\
  &=\liminf\limits_{\e\to 0}\int_\Omega\overline h(\omega,s\e(\omega))\,dP(\omega)\\
  &\geq\int_\Omega\int_{\sM}\overline h(\omega,\xi)\,d\mu_\omega(\xi)\,dP(\omega)\\
  &=\int_\Omega\int_{\sB^p}\Big(\int_\Omega\int_Qh(\tomega,x,v(\tomega,x))\,dx\,dP(\tomega)\Big)\,d\nu_\omega(v)\,dP(\omega).
\end{align*}
\end{proof}

\begin{proof}[Proof of Lemma~\ref{L:fromquenchedtomean}]
  By (b) and (c) the sequence $(\tilde u\e)$ is bounded in $\sB^p$ and thus we can pass to a subsequence such that $(\tilde u\e)$ generates a Young measure $\boldsymbol \nu$. Set $\tilde u:=\int_\Omega\int_{\sB^p}v\,d\nu_\omega(v)\,dP(\omega)$ and note that Theorem~\ref{thm:Balder-Thm-two-scale} implies that $\tilde u\e\wt \tilde u$ weakly two-scale in the mean. On the other hand the theorem implies that $\nu_\omega$ concentrates on the quenched two-scale cluster points of $(u^\omega\e)$ (for a.a.~$\omega\in\Omega$). Hence, in view of (a) we conclude that for a.a.~$\omega\in\Omega$ the measure $\nu_\omega$ is a Dirac measure concentrated on $u$, and thus $\tilde u=u$ a.e.~in $\Omega\times Q$.  
\end{proof}

\section{Convex homogenization via stochastic unfolding}\label{Section_Applications}

In this section we revisit a standard model example of stochastic homogenization of integral functionals from the viewpoint of stochastic two-scale convergence and unfolding. In particular, we discuss two examples of convex homogenization problems that can be treated with stochastic two-scale convergence in the \textit{mean}, but not with the \textit{quenched} variant. In the first example in Section \ref{sec:nonerg} the randomness is \textit{nonergodic} and thus quenched two-scale convergence does not apply.  In the second example, in Section \ref{sec:1012}, we consider a \textit{variance-regularization} to treat a convex minimization problem with degenerate growth conditions. In these two examples we also demonstrate the simplicity of using the stochastic unfolding operator. Furthermore, in Section \ref{Section:4:3} we use the results of Section \ref{Section_4} to further reveal the structure of the previously obtained limits in the classical ergodic case with non-degenerate growth with help of Young measures. In particular, we show how to lift mean homogenization results to quenched statements.
\subsection{Nonergodic case}\label{sec:nonerg}
In this section we consider a nonergodic stationary medium. Such random ensembles arise naturally, e.g., in the context of periodic representative volume element (RVE) approximations, see \cite{fischer2019optimal}. For example, we may consider a family of i.i.d. random variables $\cb{\overline{\omega}(z)}_{z\in \mathbb{Z}^d}$. A realization of a stationary and ergodic random checkerboard is given by
\begin{equation*}
\omega: \R^d \to \R, \quad \omega(x) = \sum_{i\in \mathbb{Z}^d} \mathbf{1}_{i+y+\Box}(x)\overline{\omega}(\lfloor x \rfloor),
\end{equation*}
where $\lfloor x\rfloor\in \mathbb{Z}^d$ is the integer part of $x$ and $y\in \Box$ is the center of the checkerboard chosen uniformly from $\Box=[0,1)^d$. For $L\in \N$, we may consider the map $\pi_{L}: \omega \mapsto \omega_{L}$ given by $\pi_{L}\omega(x)=\omega(x)$ for $x\in [0,L)^d$ and $\pi_{L}\omega$ is $L$-periodically extended. The push forward of the map $\pi_{L}$ defines a stationary and nonergodic probability measure, that is a starting point in the periodic RVE method. Another standard example of a nonergodic structure may be obtained by considering a medium with a \textit{noncompatible} quasiperiodic microstructure, see \cite[Example 1.2]{Zhikov2006}.
\bigskip

In this section we consider the following situation.
Let $p\in (1,\infty)$ and $Q\subset \R^d$ be open and bounded. We consider $V:\Omega\times Q\times \re{d}\rightarrow \re{}$ and assume:
\begin{enumerate}[label = (A\arabic*)]
\item \label{ass:1} $V(\cdot,\cdot, F)$ is $\mathcal{F}\otimes \mathcal{L}(Q)$-measurable for all $F\in \R^{d}$.
\item \label{ass:2} $V(\omega, x, \cdot)$ is convex for a.a. $(\omega,x)\in \Omega\times Q$.
\item \label{ass:3} There exists $C>0$ such that
\begin{equation*}
\frac{1}{C}|F|^p-C\leq V(\omega, x, F) \leq C(|F|^p+1)
\end{equation*}
for a.a. $(\omega,x) \in \Omega\times Q$ and all $F\in \re{d}$.
\end{enumerate}
We consider the functional 
\begin{equation}\label{energy}
 \mathcal{E}_{\varepsilon}: L^p(\Omega)\otimes \sob_0(Q)\rightarrow \re{},\quad
 \mathcal{E}_{\varepsilon}(u)=\ex{\int_Q V(\tau_{\frac{x}{\varepsilon}}\omega, x,\nabla u(\omega,x))dx}.
\end{equation}

Under assumptions \ref{ass:1}-\ref{ass:3}, in the limit $\varepsilon\rightarrow 0$ we obtain the two-scale functional
\begin{align}\label{energy_hom}
\begin{split}
& \mathcal{E}_0:\brac{L^p_{{\inv}}(\Omega)\otimes \sob_0(Q)} \times \brac{L^p_{\pot}(\Omega)\otimes L^p(Q)} \to\R,\\
& \mathcal{E}_0(u,\chi)=\ex{\int_Q V(\omega, x, \nabla u(\omega,x)+ \chi(\omega,x)) dx}.
\end{split}
\end{align}
\begin{thm}[Two-scale homogenization]\label{thm1}
Let $p\in (1,\infty)$ and $Q\subset \R^d$ be open and bounded. Assume \ref{ass:1}-\ref{ass:3}.
\begin{enumerate}[label=(\roman*)]
\item (Compactness and liminf inequality.) Let $u\e \in L^p(\Omega)\otimes \sob_0(Q)$ be such that
$\limsup_{\varepsilon\rightarrow 0}\mathcal{E}_{\varepsilon}(u\e)<\infty$. 
There exist $(u,\chi) \in \brac{L^p_{{\inv}}(\Omega)\otimes \sob_0(Q)} \times \brac{L^p_{\pot}(\Omega)\otimes L^p(Q)}$ and a subsequence (not relabeled) such that
\begin{align}
& u\e \wt u \text{ in }\ltp, \quad \nabla u\e \wt \nabla u+\chi  \text{ in }\ltp, \label{convergence} \\ & \liminf_{\varepsilon\to 0}\mathcal{E}\e(u\e)\geq \mathcal{E}_0(u,\chi). \label{eq:838}
\end{align}
\item (Limsup inequality.) Let $(u,\chi)\in \brac{L^p_{{\inv}}(\Omega)\otimes \sob_0(Q)} \times \brac{L^p_{\pot}(\Omega)\otimes L^p(Q)}$. There exists a sequence $u\e \in L^p(\Omega)\otimes \sob_0(Q)$ such that
\begin{align}
& u\e \st u \text{ in }\ltp, \quad  \nabla u\e \st \nabla u+\chi  \text{ in }\ltp, \label{eq:842}\\
& \limsup_{\varepsilon\rightarrow 0}\mathcal{E}_{\varepsilon}(u\e) \leq \mathcal{E}_0(u,\chi).\label{eq:843}
\end{align}
\end{enumerate}
\end{thm}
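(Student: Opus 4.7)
My plan is to prove both parts by exploiting the isometry property of the stochastic unfolding operator $\unf$ together with Proposition~\ref{prop:292} and the classical weak lower semicontinuity / strong continuity of convex integral functionals on $\ltp$.

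\textbf{Compactness and liminf.} The lower bound in \ref{ass:3} yields $\tfrac{1}{C}\|\nabla u\e\|_{\ltp}^p \leq \mathcal E\e(u\e) + C|Q|$, so $(u\e)$ is bounded in $L^p(\Omega)\otimes \sob_0(Q)$ by Poincar\'e's inequality on $Q$. Proposition~\ref{prop:292}(ii) then supplies $u\in L^p_{\inv}(\Omega)\otimes \sob(Q)$ and $\chi\in L^p_{\pot}(\Omega)\otimes L^p(Q)$, along a subsequence, such that the convergences \eqref{convergence} hold; the zero-boundary condition $u\in L^p_{\inv}(\Omega)\otimes \sob_0(Q)$ transfers from $u\e$ to $u$, since $\unf$ acts only in the $\omega$-variable and hence $\unf u\e \in L^p(\Omega)\otimes \sob_0(Q)$, and weak convergence in $L^p(\Omega;\sob(Q))$ together with the compact trace embedding preserves the vanishing trace. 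For \eqref{eq:838} the key step is the unfolded representation
\begin{equation*}
\mathcal E\e(u\e) \;=\; \ex{\int_Q V(\omega,x,(\unf \nabla u\e)(\omega,x))\,dx},
\end{equation*}
which follows by the change of variables $\omega \mapsto \tau_{x/\varepsilon}\omega$ (valid since $\tau$ is measure preserving) or, equivalently, from Lemma~\ref{L:unf} by observing that $V(\tau_{x/\varepsilon}\omega,x,\nabla u\e(\omega,x))$ is the image under the integral-preserving adjoint $\unf^*$ of $V(\omega,x,(\unf\nabla u\e)(\omega,x))$. Since $\unf \nabla u\e \weakto \nabla u + \chi$ weakly in $\ltp$ and $V$ is, by \ref{ass:1}--\ref{ass:3}, a normal convex integrand with $p$-growth, the functional $w\mapsto \ex{\int_Q V(\omega,x,w(\omega,x))\,dx}$ is weakly lower semicontinuous on $\ltp$ by the Ioffe--Tonelli theorem, which gives \eqref{eq:838}.

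\textbf{Limsup.} Proposition~\ref{prop:292}(iii) produces a sequence $u\e \in L^p(\Omega)\otimes \sob_0(Q)$ satisfying \eqref{eq:842}, that is, $\unf \nabla u\e \to \nabla u + \chi$ strongly in $\ltp$. The unfolded representation above combined with the strong $L^p$-continuity of the convex $p$-growth functional (Vitali's convergence theorem applied to a pointwise a.e.~convergent subsequence, using the upper bound in \ref{ass:3} to obtain a uniformly integrable majorant) yields $\mathcal E\e(u\e) \to \mathcal E_0(u,\chi)$ and hence \eqref{eq:843}.

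\textbf{Main obstacle.} The only genuinely delicate step is the unfolded reformulation of $\mathcal E\e$, which is what allows us to replace an oscillatory argument $\nabla u\e$ viewed against $V(\tau_{x/\varepsilon}\omega,\cdot)$ by the weakly (resp.~strongly) convergent argument $\unf\nabla u\e$ against the fixed, $\varepsilon$-independent integrand $V(\omega,\cdot)$. Once this identity is in place, both the liminf and the limsup reduce to routine convergence properties of convex integrals on the fixed Banach space $\ltp$, bypassing the test-function constructions that would be required in a quenched argument.
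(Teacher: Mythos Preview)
Your approach is essentially identical to the paper's: bound $u\e$ via \ref{ass:3} and Poincar\'e, invoke Proposition~\ref{prop:292}(ii), rewrite $\cE\e(u\e)$ via the unfolded identity $\ex{\int_Q V(\tau_{x/\eps}\omega,x,\nabla u\e)}=\ex{\int_Q V(\omega,x,\unf\nabla u\e)}$, and then use weak lower semicontinuity (resp.~strong continuity) of the fixed convex integral. The paper proceeds exactly this way, citing the transformation identity as \cite[Proposition~3.5(i)]{heida2019stochastic}.

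There is one genuine slip in your boundary-condition argument. The assertion that $\unf u\e\in L^p(\Omega)\otimes W^{1,p}_0(Q)$ because ``$\unf$ acts only in the $\omega$-variable'' is false: the shift $\tau_{-x/\eps}$ depends on $x$, so for a simple tensor $\varphi(\omega)\eta(x)$ one gets $(\unf u)(\omega,x)=\varphi(\tau_{-x/\eps}\omega)\eta(x)$, which need not be weakly differentiable in $x$ for general $\varphi\in L^p(\Omega)$, and even for smooth $\varphi$ the spatial gradient acquires a factor $\eps^{-1}$, so no uniform $L^p(\Omega;W^{1,p}(Q))$ bound is available. Consequently your claimed weak convergence in $L^p(\Omega;W^{1,p}(Q))$ does not hold. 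The paper repairs this step differently: it applies the contractive projection $P_{\inv}$, observing that $P_{\inv}\unf=P_{\inv}$, that $P_{\inv}$ commutes with the spatial gradient, and that $P_{\inv}\chi=0$ for $\chi\in L^p_{\pot}(\Omega)\otimes L^p(Q)$; hence $P_{\inv}u\e\weakto u$ weakly in $L^p(\Omega)\otimes W^{1,p}(Q)$, and since $P_{\inv}u\e\in L^p_{\inv}(\Omega)\otimes W^{1,p}_0(Q)$ (a closed, hence weakly closed, subspace) the limit $u$ inherits the zero trace. With this fix your proof is complete and matches the paper's.
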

\begin{proof}[Proof of Theorem \ref{thm1}]
(i)
The Poincar{\'e} inequality and \ref{ass:3} imply that $u\e$ is bounded in $L^p(\Omega)\otimes W^{1,p}(Q)$. By Proposition \ref{prop:292} (ii) there exist $u \in L^p_{{\inv}}(\Omega)\otimes W^{1,p}(Q)$ and $\chi \in L^p_{\pot}(\Omega)\otimes L^p(Q)$ such that \eqref{convergence} holds. Also, note that $P_{\mathrm{inv}}u\e \rightharpoonup u$ weakly in $L^p(\Omega)\otimes W^{1,p}(Q)$ and $P_{\mathrm{inv}}u\e \in L^p_{\mathrm{inv}}(\Omega)\otimes W^{1,p}_0(Q)$, which implies that $u$ also has $0$ boundary values, i.e., $u \in L^p_{\mathrm{inv}}(\Omega)\otimes W^{1,p}_0(Q)$. Finally, we note that, see \cite[Proposition 3.5 (i)]{heida2019stochastic},
\begin{equation}\label{eq:930:2}
\ex{\int_{Q} V(\tau_{\frac{x}{\varepsilon}}\omega,x,v(\omega,x))} = \ex{\int_{Q} V(\omega,x,\unf v(\omega,x))} \quad \text{for any }v \in L^p(\Omega\times Q),
\end{equation}
and thus using the convexity of $V$ we conclude
\begin{equation*}
\liminf_{\varepsilon\to 0} \mathcal{E}\e(u\e) = \liminf_{\varepsilon\to 0} \ex{\int_{Q}V(\omega,x,\unf \nabla u\e)}\geq \mathcal{E}_0(u,\chi).
\end{equation*}
\medskip

(ii) The existence of a sequence $u\e$ with \eqref{eq:842} follows from Proposition \ref{prop:292} (iii). Furthermore, \eqref{eq:930:2} and the growth assumption \ref{ass:3} yield
\begin{equation*}
\lim_{\varepsilon\to 0}\mathcal{E}_{\varepsilon}(u\e) = \lim_{\varepsilon\to 0} \ex{\int_{Q}V(\omega,x,\unf \nabla u\e)} = \mathcal{E}_{0}(u,\chi). 
\end{equation*}
This concludes the claim, in particular, we even show a stronger result stating convergence of the energy. 
\end{proof}

\begin{remark}[Convergence of minimizers]\label{C:thm1}
We consider the setting of Theorem \ref{thm1}.  Let $u\e\in L^p(\Omega)\otimes W^{1,p}_0(Q)$ be a minimizer of the functional 
\begin{equation*}
\mathcal{I}_{\varepsilon}:L^p(\Omega)\otimes W^{1,p}_0(Q)\to \R, \quad \mathcal{I}_{\varepsilon}(u)= \cE\e(u) - \ex{\int_{Q}u\e f\e dx},
\end{equation*}
where $f\e \in L^q(\Omega\times Q)$ and $f\e \overset{2}{\to}f$ with $f\in L^q(Q)$ and $\frac{1}{p}+\frac{1}{q}=1$. By a standard argument from the theory  of $\Gamma$-convergence Theorem \ref{thm1} (cf. \cite[Corollary 7.2]{varga2019stochastic}) implies
that there exist a subsequence (not relabeled),  $u\in L^p_{\inv}(\Omega)\times W^{1,p}_0(Q)$, and $\chi\in L^p_{\pot}(\Omega)\otimes L^p(Q)$ such that $u\e \wt u \text{ in }\ltp$, $\nabla u\e \wt \nabla u+\chi  \text{ in }\ltp$, and 
  \begin{equation*}
    \lim\limits_{\eps\to 0}\min\mathcal{I}\e=    \lim\limits_{\eps\to 0}\mathcal{I}\e(u\e)=\mathcal{I}_0(u,\chi)=\min\mathcal{I}_0,
  \end{equation*}
where $\mathcal{I}_0: L^p_{\mathrm{inv}}(\Omega)\otimes W^{1,p}_0(Q)\to \R$ is given by $\mathcal{I}_{0}(u)= \mathcal{E}_0(u)-\int_{Q}f u dx$. This, in particular, rigorously justifies the formal two-scale expansion $\nabla u\e(x) \approx \nabla u_{0}(\omega, x) + \chi( \tau_{\frac{x}{\varepsilon}}\omega, x)$. 
\end{remark}

\begin{remark}[Uniqueness]\label{remark:1099}
If $V(\omega, x,\cdot)$ is strictly convex the minimizers are unique and the convergence in the above remark holds for the entire sequence.
\end{remark}
\subsection{Variance-regularization applied to degenerate growth }\label{sec:1012}
In this section we consider homogenization of convex functionals with degenerate growth. More precisely, we consider an integrand $V$ that satisfies \ref{ass:1}, \ref{ass:2} and the following assumption (as a replacement of \ref{ass:3}):
\begin{enumerate}[label = (A\arabic*'), start=3]
\item \label{ass:3b} There exists $C>0$ and a random variable $\lambda\in L^1(\Omega)$ such that
  \begin{equation}
    \label{eq:moment}
    \ex{\lambda^{-\frac{1}{p-1}}}^{p-1}<C
  \end{equation}
and
\begin{equation*}
\lambda(\omega)|F|^p-C\leq V(\omega, x, F) \leq C(\lambda(\omega)|F|^p+1)
\end{equation*}
for a.a. $(\omega,x) \in \Omega\times Q$ and all $F\in \re{d}$.
\end{enumerate}
Moreover, we assume that $\ex{\cdot}$ is ergodic. For $\varepsilon>0$ we consider the following functional
\begin{equation*}
\mathcal{E}_{\varepsilon}:L^1(\Omega \times Q) \to \R\cup\cb{\infty}, \qquad \mathcal{E}_{\varepsilon}(u)= \ex{\int_{Q} V(\tau_{\frac{x}{\varepsilon}}\omega, x, \nabla u) dx},
\end{equation*}
for $u \in X_{\varepsilon}$ and $\mathcal{E}_{\varepsilon}(u) = \infty$ otherwise. Here $X_{\varepsilon}$ denotes the closure of $\cb{u\in L^p(\Omega)\otimes W^{1,p}_0(Q)}$ w.r.t.~the weighted norm
\begin{equation*}
  \|u\|_{\lambda_{\varepsilon}} := \ex{\int_{Q}\lambda(\tau_{\frac{x}{\varepsilon}}\omega) |\nabla u|^p dx}^{\frac{1}{p}}.
\end{equation*}
Recently, in \cite{NSS, HoppePhd, HNS} it shown that $\mathcal{E}_\varepsilon$ Mosco-converges to the functional
\begin{equation*}
  \mathcal E_{\hom}: L^1(Q)\to\R\cup \cb{\infty},\qquad \mathcal E_{\hom}(u):=\int_Q V_{\hom}(x,\nabla u(x))\,dx,
\end{equation*}
for $u \in W^{1,p}_0(Q)$ and $\mathcal E_{\hom}(u)=\infty$ otherwise, where $V_{\hom}:Q\times\R^{d}\to\R$ is given by the homogenization formula,
\begin{align}\label{equation2}
  V_{\hom}(x,F)=\inf_{\chi\in L^p_{\pot}(\Omega)}\ex{V(\omega, x,F+\chi(\omega))},
\end{align}
for $x\in Q$ and $F\in \R^{d}$. Moreover, it is shown that $V_{\mathrm{hom}}$ is a normal convex integrand that satisfies a standard $p$-growth condition. Note that the assumption \ref{ass:3b} in comparison to \ref{ass:3} makes a genuine difference in regard to the homogenization formula \eqref{equation2}. In particular, in the setting of assumption \ref{ass:3} minimizers are attained due to the coercivity of the underlying functional in $L^p_{\pot}(\Omega)$. It is thus easy to see that the homogenized integrand satisfies $p$-growth condition as well, see Section \ref{Section:4:3} below.  
On the other hand, in the setting of this section assuming \ref{ass:3b}, \eqref{equation2} is a degenerate minimization problem and a priori minimizers will only have finite first moments. An additional argument is required to infer that $V_{\hom}$ in \eqref{equation2} is non-degenerate, in particular, in \cite[Theorem 3.1]{NSS} it is shown that there exists a constant $C'>0$ such that for all $x\in Q$ and $F\in \R^d$ we have
\begin{equation}\label{eq:984:5}
\frac{1}{C'} |F|^p - C' \leq V_{\mathrm{hom}}(x,F) \leq C'\brac{|F|^p+1}.
\end{equation}
One of the difficulties in the proof of the homogenization result for $\mathcal E_{\varepsilon}$ is due to the fact that the domain of the functionals are $\varepsilon$-dependent. Moreover, assumption \ref{ass:3b} only yields equicoercivity in $L^1(\Omega)\otimes W^{1,1}_0(Q)$, while the limit $\mathcal E_{\hom}$ is properly defined on $W^{1,p}(Q)$. Therefore, in practice it is convenient to regularize the problem: For $\delta>0$ we consider the regularized homogenization formula
\begin{equation*}
  V_{\hom,\delta}(x,F)=\inf_{\chi\in L^p_{\pot}(\Omega)}\ex{V(\omega, x,F+\chi(\omega))+\delta|\chi(\omega)|^p}.
\end{equation*}
It is simple to show that the infimum on the right-hand side is attained by a unique minimizer. We also consider the corresponding regularized homogenized integral functional
  \begin{equation*}
    \mathcal E_{\hom,\delta}:L^1(Q)\to\R\cup \cb{\infty},\qquad \mathcal E_{\hom,\delta}(u):=\int_QV_{\hom,\delta}(\nabla u)\,dx,
  \end{equation*}
for $u\in W^{1,p}_0(Q)$ and $\mathcal{E}_{\hom, \delta}(u)=\infty$ otherwise. Furthermore, thanks to \ref{ass:3b}, it is relatively easy to see that this regularization is consistent:
\begin{lemma}\label{lemma:1054} Let $p\in (1,\infty)$ and $Q\subset \R^d$ be open and bounded. Assume \ref{ass:1}, \ref{ass:2} and \ref{ass:3b}. Then, for all $x\in Q$ and $F\in\R^{d}$, we have 
\begin{equation}\label{eq:901}
\lim_{\delta\to 0} V_{\hom,\delta}(x,F) = V_{\hom}(x,F).
\end{equation} 
Moreover, $\mathcal{E}_{\hom, \delta}$ Mosco converges to $\mathcal{E}_{\hom}$ as $\delta \to 0$, i.e., the following statements hold:
\begin{enumerate}[label=(\roman*)]
\item If $u_{\delta}\rightharpoonup u$ weakly in $L^1(Q)$, then 
\begin{equation*}
\liminf_{\delta \to 0}\mathcal{E}_{\mathrm{hom},\delta}(u_{\delta}) \geq \mathcal{E}_{\mathrm{hom}}(u).
\end{equation*}
\item For any $u \in L^1(Q)$ there exists a sequence $u_{\delta}\in L^1(Q)$ such that
\begin{equation*}
u_{\delta} \to u \quad \text{strongly in }L^1(Q), \quad \mathcal{E}_{\mathrm{hom},\delta}(u_{\delta})\to \mathcal{E}_{\mathrm{hom}}(u).
\end{equation*}
\end{enumerate}

\end{lemma}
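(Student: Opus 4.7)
\medskip
\noindent\textbf{Proof plan.} The core observation is the pointwise convergence \eqref{eq:901}; once it is established, the Mosco convergence follows from a standard argument exploiting a uniform $p$-growth bound on $V_{\hom,\delta}$.

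For \eqref{eq:901}, the bound $V_{\hom,\delta}(x,F) \geq V_{\hom}(x,F)$ is immediate from the definitions since $\delta|\chi|^p \geq 0$. For the matching upper bound, fix $\eta>0$ and, using that $V_{\hom}(x,F)$ is by definition an infimum over $L^p_{\pot}(\Omega)$, pick $\chi \in L^p_{\pot}(\Omega)$ with $\ex{V(\omega,x,F+\chi(\omega))} \leq V_{\hom}(x,F) + \eta$. Crucially, membership in $L^p_{\pot}(\Omega)$ forces $\ex{|\chi|^p}<\infty$, and testing this $\chi$ in the infimum defining $V_{\hom,\delta}(x,F)$ yields
\[
V_{\hom,\delta}(x,F) \leq V_{\hom}(x,F) + \eta + \delta\,\ex{|\chi|^p}.
\]
Sending $\delta\to 0$ and then $\eta\to 0$ completes \eqref{eq:901}.

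Next I would record the uniform two-sided $p$-growth estimate
\[
\tfrac{1}{C'}|F|^p - C' \;\leq\; V_{\hom,\delta}(x,F) \;\leq\; C\bigl(\ex{\lambda}\,|F|^p+1\bigr),
\]
where the lower bound follows from $V_{\hom,\delta} \geq V_{\hom}$ together with \eqref{eq:984:5}, and the upper bound from testing $\chi\equiv 0$ in \ref{ass:3b}. For the liminf part (i), assume $u_\delta \rightharpoonup u$ weakly in $L^1(Q)$; we may suppose $\liminf_\delta \mathcal{E}_{\hom,\delta}(u_\delta)<\infty$ and pass to a subsequence attaining this liminf as a finite limit, so in particular $u_\delta \in W^{1,p}_0(Q)$. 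The uniform lower $p$-growth bounds $\nabla u_\delta$ in $L^p(Q)^d$, hence along a further subsequence $u_\delta \weakto u$ weakly in $W^{1,p}_0(Q)$, and the Rellich--Kondrachov embedding identifies this limit with the original weak $L^1$ limit, so $u\in W^{1,p}_0(Q)$. Since $V_{\hom,\delta} \geq V_{\hom}$ and $V_{\hom}$ is a convex normal integrand of $p$-growth, the classical weak lower semicontinuity theorem yields
\[
\liminf_{\delta\to 0}\mathcal{E}_{\hom,\delta}(u_\delta) \geq \liminf_{\delta\to 0}\int_Q V_{\hom}(x,\nabla u_\delta)\,dx \geq \int_Q V_{\hom}(x,\nabla u)\,dx = \mathcal{E}_{\hom}(u).
\]

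For the recovery part (ii) I would simply take the constant sequence $u_\delta:=u$: if $u\notin W^{1,p}_0(Q)$ both sides are $+\infty$, and otherwise the uniform upper $p$-growth supplies the $L^1(Q)$-dominating function $C(\ex{\lambda}|\nabla u|^p+1)$, so that \eqref{eq:901} and the dominated convergence theorem give $\mathcal{E}_{\hom,\delta}(u) \to \mathcal{E}_{\hom}(u)$. The only step with any real content is \eqref{eq:901}: it depends on the infimum defining $V_{\hom}$ being taken over the genuine $L^p$ space $L^p_{\pot}(\Omega)$, which ensures that $\eta$-optimal correctors have finite $p$-th moment and hence vanishing penalty $\delta\ex{|\chi|^p}$ as $\delta\to 0$. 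I do not anticipate any serious obstacles beyond this.
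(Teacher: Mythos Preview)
Your proposal is correct and follows essentially the same route as the paper: the pointwise limit \eqref{eq:901} via an $\eta$-optimal $\chi\in L^p_{\pot}(\Omega)$, the liminf via $V_{\hom,\delta}\geq V_{\hom}$, the coercivity bound \eqref{eq:984:5} to upgrade to weak $W^{1,p}_0$-compactness, and the recovery sequence $u_\delta\equiv u$ via dominated convergence. The only cosmetic difference is that you make the dominating function in part (ii) explicit by recording the upper bound $V_{\hom,\delta}(x,F)\le C(\ex{\lambda}|F|^p+1)$ from testing $\chi\equiv 0$, whereas the paper leaves this implicit.
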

\begin{proof}
Let $F\in \R^{d}$ and $x\in Q$. Since $\delta>0$, we have $V_{\hom,\delta}(x,F)\geq V_{\hom}(x,F)$. On the other hand, we consider a minimizing sequence $\chi_{\eta}\in L^p_{\mathrm{pot}}(\Omega)$ in \eqref{equation2}, e.g., 
\begin{equation*}
\ex{V(\omega, x, F+ \chi_{\eta})} \leq V_{\hom}(x,F) + \eta. 
\end{equation*}
We have
\begin{equation*}
V_{\hom,\delta}(x,F)\leq \ex{V(\omega, x, F+ \chi_{\eta})+ \delta |\chi_{\eta}|^p} \leq V_{\hom}(x,F)+ \eta + \delta \ex{|\chi_{\eta}|^p}.
\end{equation*}
Letting first $\delta\to 0$ and then $\eta\to 0$, we conclude \eqref{eq:901}.

We further consider a sequence $u_{\delta}$ such that $u_{\delta}\rightharpoonup u$ weakly in $L^1(Q)$ as $\delta \to 0$. We assume without loss of generality that $\limsup_{\delta \to 0}\mathcal{E}_{\hom,\delta}(u_{\delta})< \infty$. This, in particular, with the help of \eqref{eq:984:5} and the Poincar{\'e} inequality implies that $\limsup_{\delta \to 0}  \|u_{\delta}\|_{W^{1,p}_0(Q)}< \infty$. Thus, up to a subsequence, we have $ u_{\delta} \rightharpoonup  u$ weakly in $W^{1,p}_0(Q)$. Using this, we obtain
\begin{equation*}
\liminf_{\delta \to 0}\mathcal{E}_{\hom, \delta}(u_{\delta})\geq  \liminf_{\delta \to 0}\mathcal{E}_{\hom}(u_{\delta}) \geq \mathcal{E}_{\hom}(u).
\end{equation*}
The first inequality follows by \eqref{eq:901} and the second is a consequence of the fact that $V_{\hom}(x,\cdot)$ is convex and of Fatou's Lemma. We conclude that (i) holds.

If $u \notin \mathrm{dom}(\mathcal{E}_{\hom})$, we simply choose $u_{\delta}=u$. On the other hand, for $u\in \mathrm{dom}(\mathcal{E}_{\hom})=W^{1,p}_0(Q)$, \eqref{eq:901} and the dominated convergence theorem yield 
\begin{equation*}
\lim_{\delta\to 0}\mathcal{E}_{\hom, \delta}(u) =\mathcal{E}_{\hom}(u). 
\end{equation*}
This means that (ii) holds. 
\end{proof}
In the following we introduce a variance regularization of the original functional $\mathcal E\e$ that removes the degeneracy of the problem and thus can be analyzed by the standard strategy of Section~\ref{sec:nonerg}. For $\delta>0$, we consider
\begin{equation}\label{energy-delta}
 \mathcal{E}_{\varepsilon,\delta}:L^1(\Omega \times Q)\rightarrow \re{},\quad
 \mathcal{E}_{\varepsilon,\delta}(u)=\ex{\int_Q V(\tau_{\frac{x}{\varepsilon}}\omega, x,\nabla u(x))+\delta|\nabla u(x)-\ex{\nabla u( x)}|^pdx},
\end{equation}
for $u\in L^p(\Omega)\otimes W^{1,p}_0(Q)$ and $\mathcal{E}_{\varepsilon, \delta}= \infty$ otherwise.
Due to the structure of the additional term, we call it a variance-regularization and we note that it only becomes active for non-deterministic functions. For fixed $\delta>0$, the functional $\mathcal E_{\varepsilon,\delta}$ is equicoercive on $L^p(\Omega)\otimes W^{1,p}_0(Q)$:
\begin{lemma}\label{lemma:940} Let $p\in (1,\infty)$ and $Q\subset \R^d$ be open and bounded. Assume \ref{ass:1} and \ref{ass:3b}. Then there exists $C=C(Q,p)>0$ such that, for all $u\in L^p(\Omega)\otimes W^{1,p}_0(Q)$, it holds
  \begin{equation*}
    \ex{\int_Q|\nabla u|}^p+\delta\ex{\int_Q|\nabla u|^p}\leq C\big(\mathcal E_{\varepsilon,\delta}(u)+1\big).
  \end{equation*}
\end{lemma}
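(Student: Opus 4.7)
The plan is to combine the weighted coercivity from \ref{ass:3b} with the variance-regularization, using the moment condition \eqref{eq:moment} to convert weighted $L^p$-control into control of the unweighted $L^1$-norm and, together with the variance term, into control of the unweighted $L^p$-norm. Throughout set $\bar v(x):=\ex{\nabla u(\omega,x)}$ and write $\lambda_\varepsilon(\omega,x):=\lambda(\tau_{x/\varepsilon}\omega)$. Since the variance part of $\mathcal E_{\varepsilon,\delta}$ is nonnegative, \ref{ass:3b} yields
\begin{equation*}
\mathcal E_{\varepsilon,\delta}(u) + C|Q| \;\geq\; \ex{\int_Q \lambda_\varepsilon|\nabla u|^p\,dx} + \delta \ex{\int_Q |\nabla u-\bar v|^p\,dx}.
\end{equation*}

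For the first term on the left-hand side of the lemma I would apply H\"older's inequality on $\Omega\times Q$ to the factorisation $|\nabla u| = (\lambda_\varepsilon^{1/p}|\nabla u|)\cdot\lambda_\varepsilon^{-1/p}$, obtaining
\begin{equation*}
\ex{\int_Q |\nabla u|\,dx} \;\leq\; \ex{\int_Q \lambda_\varepsilon|\nabla u|^p\,dx}^{1/p} \ex{\int_Q \lambda_\varepsilon^{-1/(p-1)}\,dx}^{(p-1)/p}.
\end{equation*}
By stationarity of $\tau$ we have $\ex{\lambda_\varepsilon^{-1/(p-1)}(\cdot,x)} = \ex{\lambda^{-1/(p-1)}}$ for every $x\in Q$, and \eqref{eq:moment} bounds this by a constant depending only on $C$ and $p$. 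Raising to the $p$-th power then delivers $\ex{\int_Q|\nabla u|}^p \leq C(Q,p)(\mathcal E_{\varepsilon,\delta}(u)+1)$.

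For the second term I would use the elementary estimate $|\nabla u|^p\leq 2^{p-1}(|\nabla u-\bar v|^p + |\bar v|^p)$. The stochastic part is controlled directly by the variance contribution: $\delta\ex{\int_Q|\nabla u-\bar v|^p}\leq \mathcal E_{\varepsilon,\delta}(u)+C|Q|$. For the deterministic part, Jensen's inequality gives $|\bar v(x)|^p\leq \ex{|\nabla u(x)|}^p$, and applying H\"older's inequality in $\omega$ only (with the same split as above) yields the pointwise-in-$x$ bound
\begin{equation*}
\ex{|\nabla u(x)|}^p \;\leq\; \ex{\lambda_\varepsilon(x)|\nabla u(x)|^p}\,\ex{\lambda^{-1/(p-1)}}^{p-1} \;\leq\; C\,\ex{\lambda_\varepsilon(x)|\nabla u(x)|^p}.
\end{equation*}
Integrating over $Q$ gives $\int_Q|\bar v|^p\,dx\leq C(\mathcal E_{\varepsilon,\delta}(u)+1)$, and combining with the variance estimate yields the desired bound on $\delta\ex{\int_Q|\nabla u|^p}$ (absorbing the $\delta$-prefactor on the deterministic piece, which is harmless for $\delta$ in a bounded range as is the case in the subsequent $\delta\to 0$ analysis).

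The main subtlety is the handling of the deterministic piece $\delta\int_Q|\bar v|^p$: the weighted coercivity only controls it \emph{without} a balancing $\delta$-factor, and it cannot be bounded by the variance term alone (which vanishes on deterministic functions). The key observation that makes the argument work is that Jensen plus the H\"older splitting based on \eqref{eq:moment} transfers the weighted $L^p$-estimate for $\nabla u$ into an $L^p$-estimate for the deterministic average $\bar v$, thereby closing the loop without any additional a~priori information on $u$.
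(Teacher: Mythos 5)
Your proposal is correct and essentially coincides with the paper's own argument: both bound $\ex{\int_Q|\nabla u|}^p$ by H\"older/Jensen together with the moment condition \eqref{eq:moment} and stationarity, and both control $\delta\ex{\int_Q|\nabla u|^p}$ by splitting off the variance term and estimating the deterministic average $\ex{\nabla u}$ through the same weighted H\"older bound. Your explicit remark about absorbing the $\delta$-prefactor on the deterministic piece (for $\delta$ in a bounded range) only makes visible a restriction that is implicit in the paper's proof as well.
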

\begin{proof}
By Jensen's and H\"older's inequalities we have
\begin{equation*}
  \ex{\int_Q|\nabla u|dx}^p \leq |Q|^{p-1}\int_Q\ex{|\nabla u|}^p\leq|Q|^{p-1}\ex{{\lambda_{\varepsilon}^{-\frac{1}{p-1}}}}^{p-1}\,\ex{\int_Q\lambda_{\varepsilon}|\nabla u|^p},
\end{equation*}
where we use the notation $\lambda_{\varepsilon}(x,\omega)=\lambda(\tau_{\frac{x}{\varepsilon}}\omega)$. Furthermore, using \ref{ass:3b}, we conclude that
\begin{equation*}
\ex{\int_Q|\nabla u|dx}^p \leq C(Q,p)\brac{\mathcal{E}_{\varepsilon,\delta}(u) + 1}.
\end{equation*}
In the end, using the variance-regularization we obtain
\begin{eqnarray*}
  2^{-p}\ex{\int_Q|\nabla u|^p}&\leq&\ex{\int_Q|\nabla u-\ex{\nabla u}|^p}+\int_Q\ex{|\nabla u|}^p\\
  &\leq& \frac{C}{\delta}\brac{\mathcal E_{\varepsilon,\delta}(u)+1}+C\big(\mathcal E_{\varepsilon,\delta}(u)+1\big).
\end{eqnarray*}
This concludes the proof.  
\end{proof}

The regularization on the $\varepsilon$-level is also consistent. In particular, we show that in the limit $\delta \to 0$, we recover $\mathcal{E}_{\varepsilon}$.
We discuss the mean functionals $\mathcal{E}_{\varepsilon,\delta}$ and $\mathcal{E}_{\varepsilon}$, since the former does not admit a well-defined pointwise evaluation in $\omega$ for the reason of the nonlocal variance term. Also, for the same reason the quenched version of stochastic two-scale convergence is not suitable for this setting and we apply the unfolding procedure. On the other hand, the homogenization of $\mathcal{E}_{\varepsilon}$ can be conducted on the level of typical realizations, that was in fact studied in \cite{NSS, HoppePhd, HNS}.

\begin{lemma}\label{lemma:1124} Let $p\in (1,\infty)$ and $Q\subset \R^d$ be open and bounded. Assume \ref{ass:1}, \ref{ass:2} and \ref{ass:3b}. Then, $\mathcal{E}_{\varepsilon, \delta}$ Mosco converges to $\mathcal{E}_{\varepsilon}$ as $\delta \to 0$ i.e., the following statements hold:
\begin{enumerate}[label=(\roman*)]
\item If $u_{\delta}\rightharpoonup u$ weakly in $L^1(\Omega \times Q)$, then 
\begin{equation*}
\liminf_{\delta \to 0}\mathcal{E}_{\varepsilon, \delta}(u_{\delta}) \geq \mathcal{E}_{\varepsilon}(u).
\end{equation*}
\item For any $u \in L^1(\Omega \times Q)$ there exists a sequence $u_{\delta}\in L^1(\Omega \times Q)$ such that
\begin{equation*}
u_{\delta} \to u \quad \text{strongly in }L^1(\Omega \times Q), \quad \mathcal{E}_{\varepsilon,\delta}(u_{\delta})\to \mathcal{E}_{\varepsilon}(u).
\end{equation*}
\end{enumerate}

\end{lemma}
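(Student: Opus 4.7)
\textbf{Plan for the proof of Lemma~\ref{lemma:1124}.} I would treat the two Mosco inequalities separately. The unifying idea is that the variance term $\delta\,\ex{\int_Q|\nabla u-\ex{\nabla u}|^p\,dx}$ is non-negative and vanishes for every fixed admissible test function in the limit $\delta\to 0$, so the regularization only adds convexity/coercivity that disappears in the limit. The coercivity package from Lemma~\ref{lemma:940} together with the lower bound in \ref{ass:3b} will provide just enough compactness for the liminf, while the definition of $X_\varepsilon$ provides a natural density class for the limsup.

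\textbf{Liminf part (i).} I would assume without loss of generality that $M:=\liminf_{\delta\to 0}\mathcal{E}_{\varepsilon,\delta}(u_\delta)<\infty$ and pass to a (non-relabeled) subsequence realizing the liminf. Each such $u_\delta$ lies in $L^p(\Omega)\otimes W^{1,p}_0(Q)$, and the lower bound in \ref{ass:3b} gives
\[
\ex{\int_Q \lambda_\varepsilon|\nabla u_\delta|^p\,dx}\leq \mathcal{E}_\varepsilon(u_\delta)+C|Q|\leq \mathcal{E}_{\varepsilon,\delta}(u_\delta)+C|Q|,
\]
so $\nabla u_\delta$ is bounded in the reflexive weighted space $L^p(\Omega\times Q,\lambda_\varepsilon\,dP\,dx)$. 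By the moment condition \eqref{eq:moment} and H\"older's inequality (exactly as used in Lemma~\ref{lemma:940}) this weighted $L^p$-bound forces an $L^1(\Omega\times Q)$-bound on $\nabla u_\delta$, so extracting a further subsequence I obtain $\nabla u_\delta\weakto v$ weakly in $L^p(\Omega\times Q,\lambda_\varepsilon)$ and, by uniqueness of distributional gradients together with the assumed weak $L^1$ convergence $u_\delta\weakto u$, I identify $v=\nabla u$. Since $X_\varepsilon$ is a convex, closed (hence weakly closed) subspace of the weighted space and contains every $u_\delta$, the weak limit $u$ lies in $X_\varepsilon$. Dropping the non-negative variance term and applying the standard convex lower semicontinuity for the normal convex integrand $V(\omega,x,\cdot)+C\ge 0$ (which has natural $p$-growth in the weighted space) then yields
\[
\liminf_{\delta\to 0}\mathcal{E}_{\varepsilon,\delta}(u_\delta)\ge \liminf_{\delta\to 0}\mathcal{E}_\varepsilon(u_\delta)\ge \mathcal{E}_\varepsilon(u).
\]

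\textbf{Limsup part (ii).} If $\mathcal{E}_\varepsilon(u)=\infty$ one simply takes $u_\delta:=u$. Otherwise $u\in X_\varepsilon$ and by the very definition of $X_\varepsilon$ there exists $w_n\in L^p(\Omega)\otimes W^{1,p}_0(Q)$ with $\|w_n-u\|_{\lambda_\varepsilon}\to 0$. The same H\"older computation as above (invoking \eqref{eq:moment}) promotes this to $\nabla w_n\to\nabla u$ in $L^1(\Omega\times Q)$, and then Poincar\'e's inequality in $W^{1,1}_0(Q)$ gives $w_n\to u$ strongly in $L^1(\Omega\times Q)$. The upper bound in \ref{ass:3b} combined with the strong convergence in the weighted $L^p$-space ensures equi-integrability of $V(\tau_{x/\varepsilon}\omega,x,\nabla w_n)$, so Vitali's theorem yields $\mathcal{E}_\varepsilon(w_n)\to \mathcal{E}_\varepsilon(u)$. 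For fixed $n$ the variance contribution is bounded by $\delta\,\ex{\int_Q|\nabla w_n|^p\,dx}$ which is a finite constant times $\delta$, hence $\mathcal{E}_{\varepsilon,\delta}(w_n)\to \mathcal{E}_\varepsilon(w_n)$ as $\delta\to 0$. A standard diagonal extraction $u_\delta:=w_{n(\delta)}$ with $n(\delta)\to\infty$ sufficiently slowly produces a sequence with $u_\delta\to u$ in $L^1(\Omega\times Q)$ and $\mathcal{E}_{\varepsilon,\delta}(u_\delta)\to \mathcal{E}_\varepsilon(u)$.

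\textbf{Expected main obstacle.} The delicate point is the identification of limits in the liminf: the hypothesis only gives weak $L^1$-convergence, which on its own is too weak to control a $p$-growth convex integrand. The resolution is to exploit that the regularized energy simultaneously provides a genuine weighted-$L^p$ bound on $\nabla u_\delta$ via \ref{ass:3b}, and that the moment condition \eqref{eq:moment} links weak convergence in this weighted space to weak $L^1$-convergence, allowing the two limits (in $L^1$ for $u_\delta$ and in weighted $L^p$ for $\nabla u_\delta$) to be reconciled as $u$ and $\nabla u$. Once this identification is available, convex lower semicontinuity and the Vitali-type upper-bound argument for the recovery sequence are routine.
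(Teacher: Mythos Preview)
Your proposal is correct and follows essentially the same route as the paper's proof: for (i) you use the weighted-$L^p$ bound on $\nabla u_\delta$ from \ref{ass:3b}, pass to a weak limit, identify it with $\nabla u$ via the assumed $L^1$-convergence, and invoke convex lower semicontinuity after dropping the variance term; for (ii) you approximate $u\in X_\varepsilon$ by smooth functions in the weighted norm, show energy continuity, and diagonalize. The only cosmetic differences are that the paper tests against $L^\infty$ functions to deduce weak $L^1$-convergence of the gradients (where you argue via the continuous embedding of the weighted space into $L^1$) and invokes dominated convergence rather than Vitali for the energy continuity in (ii); you are also slightly more explicit than the paper about why $u\in X_\varepsilon$ in (i).
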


\begin{proof}
(i) Let $u_{\delta}$ be a sequence such that $u_{\delta}\rightharpoonup u$ weakly in $L^1(\Omega \times Q)$. Without loss of generality we assume that $\limsup_{\delta \to 0}\mathcal{E}_{\varepsilon, \delta}(u_{\delta}) <\infty$. This and the proof of Lemma \ref{lemma:940} imply that the sequence $\lambda_{\varepsilon}^{\frac{1}{p}} \nabla u_{\delta}$ is bounded in $L^p(\Omega \times Q)$ with the notation $\lambda_{\varepsilon}(x,\omega)=\lambda(\tau_{\frac{x}{\varepsilon}}\omega)$. This means that, up to a subsequence, we have $\lambda_{\varepsilon}^{\frac{1}{p}} \nabla u_{\delta} \rightharpoonup \psi$ weakly in $L^p(\Omega \times Q)$ for some $\psi \in L^p(\Omega \times Q)$. Thus, for an arbitrary $\eta \in L^{\infty}(\Omega\times Q)$, we have
\begin{equation*}
\ex{\int_{Q}\nabla u_{\delta} \eta dx} = \ex{\int_{Q}\lambda_{\varepsilon}^{\frac{1}{p}}\nabla u_{\delta} \lambda_{\varepsilon}^{-\frac{1}{p}}\eta dx}\to \ex{\int_{Q}\psi \lambda_{\varepsilon}^{-\frac{1}{p}}\eta dx} \quad \text{as }\varepsilon \to 0.
\end{equation*} 
This means that $\nabla u_{\delta}$ converges weakly in $L^1(\Omega \times Q)$ and since $u_{\delta}\rightharpoonup u$ weakly in $L^1(\Omega\times Q)$ we may conclude that $\nabla u_{\delta}\rightharpoonup \nabla u$ weakly in $L^1(\Omega\times Q)$.
This yields
\begin{equation*}
\liminf_{\delta \to 0}\mathcal{E}_{\varepsilon, \delta}(u_{\delta}) \geq \liminf_{\delta \to 0} \mathcal{E}_{\varepsilon}(u_{\delta}) \geq \mathcal{E}_{\varepsilon}(u). 
\end{equation*}
(ii) For an arbitrary $u\in \mathrm{dom}(\mathcal{E}_{\varepsilon})\subset X_{\varepsilon}$, we find a sequence $u_{\eta}\in L^p(\Omega)\otimes W^{1,p}_0(Q)$ such that, for $\eta \to 0$, 
\begin{equation*}
u_{\eta} \to u \quad \text{strongly in }L^1(\Omega)\otimes W^{1,1}_0(Q), \quad \ex{\int_{Q}\lambda_{\varepsilon} |\nabla u_{\eta}-\nabla u|^p dx} \to 0.
\end{equation*}
Using this and the dominated convergence theorem, we conclude that
\begin{equation*}
\lim_{\eta \to 0}\mathcal{E}_{\varepsilon}(u_{\eta}) =\mathcal{E}_{\varepsilon}(u).   
\end{equation*}
This in turn yields
\begin{equation*}
\limsup_{\eta\to 0} \limsup_{\delta\to 0} |\mathcal{E}_{\varepsilon, \delta}(u_{\eta})- \mathcal{E}_{\varepsilon}(u)| = 0.
\end{equation*}
We extract a diagonal sequence $\eta(\delta)\to 0$ as $\delta \to 0$ such that $u_{\delta}:=u_{\eta(\delta)}$ satisfies $u_{\delta}\to u$ strongly in $L^1(\Omega \times Q)$ and $\mathcal{E}_{\varepsilon,\delta}(u_{\delta})\to \mathcal{E}_{\varepsilon}(u)$. This concludes the proof.
\end{proof}

The homogenization of the regularized functional $\mathcal{E}_{\varepsilon, \delta}$ boils down to a very similar simple argumentation as in Section \ref{sec:nonerg}. 

\begin{thm}\label{theorem:1149} Let $p\in (1,\infty)$ and $Q\subset \R^d$ be open and bounded. Assume \ref{ass:1}, \ref{ass:2} and \ref{ass:3b}. For all $\delta>0$, as $\varepsilon \to 0$,  $\mathcal{E}_{\varepsilon, \delta}$ Mosco converges to $\mathcal{E}_{\mathrm{hom}, \delta}$ in the following sense:
\begin{enumerate}[label=(\roman*)]
\item  Let $u\e \in L^p(\Omega)\otimes \sob_0(Q)$ be such that
$\limsup_{\varepsilon\rightarrow 0}\mathcal{E}_{\varepsilon, \delta}(u\e)<\infty$. Then there exist $(u,\chi) \in \sob_0(Q) \times \brac{L^p_{\pot}(\Omega)\otimes L^p(Q)}$ and a subsequence (not relabeled) such that
\begin{equation*}
u\e \wt u \text{ in }\ltp, \quad \nabla u\e \wt \nabla u+\chi  \text{ in }\ltp. 
\end{equation*}

\item If $u\e \in L^1(\Omega \times Q)$, $u \in L^1(Q)$ and $\unf u_{\varepsilon} \rightharpoonup u$ weakly in $L^1(\Omega\times Q)$, then
\begin{equation*}
\liminf_{\varepsilon\to 0}\mathcal{E}_{\varepsilon, \delta}(u\e) \geq \mathcal{E}_{\hom,\delta}(u).
\end{equation*}

\item For any $u \in L^1(Q)$, there exists a sequence $u\e \in L^1(\Omega \times Q)$ such that
\begin{equation*}
\unf u\e \to u \quad \text{strongly in }L^1(\Omega\times Q), \quad \mathcal{E}_{\varepsilon,\delta}(u\e)\to \mathcal{E}_{\hom, \delta}(u).
\end{equation*}
\end{enumerate}
\end{thm}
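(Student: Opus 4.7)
The plan is to mirror the three-step pattern of the proof of Theorem~\ref{thm1}, substituting the direct $p$-coercivity of \ref{ass:3} by the coercivity estimate of Lemma~\ref{lemma:940}, and carefully tracking the extra variance term through the isometric unfolding $\unf$. Two algebraic identities are used throughout: for any deterministic $g=g(x)$ one has $\unf g = g$, and by stationarity $\ex{\unf f} = \ex{f}$. Combined with the isometry of $\unf$ these imply
\begin{equation*}
\mathcal{E}_{\varepsilon,\delta}(u_\varepsilon) = \ex{\int_Q V(\omega,x,\unf \nabla u_\varepsilon) + \delta\,\big|\unf \nabla u_\varepsilon - \ex{\unf \nabla u_\varepsilon}\big|^p\,dx},
\end{equation*}
which is the key reformulation.

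\textbf{Compactness.} From $\limsup_\varepsilon \mathcal{E}_{\varepsilon,\delta}(u_\varepsilon) < \infty$ and Lemma~\ref{lemma:940} I obtain that $(u_\varepsilon)$ is bounded in $L^p(\Omega)\otimes\sob_0(Q)$. Proposition~\ref{prop:292}(ii) then yields, along a subsequence, $u \in L^p_{\inv}(\Omega)\otimes \sob(Q)$ and $\chi \in L^p_{\pot}(\Omega)\otimes L^p(Q)$ with $u_\varepsilon \wt u$ and $\nabla u_\varepsilon \wt \nabla u + \chi$ in $\ltp$. Ergodicity identifies $u = \ex{u}$ as a deterministic element of $\sob(Q)$, and the zero boundary trace is inherited from the sequence by applying the argument to $P_{\inv}u_\varepsilon \in L^p_{\inv}(\Omega)\otimes\sob_0(Q)$, exactly as in the proof of Theorem~\ref{thm1}(i).

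\textbf{Liminf inequality.} If the liminf of the energies is $+\infty$ there is nothing to prove. Otherwise extract a finite-energy subsequence and apply compactness; uniqueness of weak-$L^1$ limits identifies the compactness limit with the given $u$. Since $\unf \nabla u_\varepsilon \weakto \nabla u + \chi$ weakly in $\ltp^d$ and, in the ergodic setting, $\ex{\chi(\cdot,x)} = 0$ for a.a.\ $x$, testing against $L^q(Q)$-functions yields $\ex{\unf \nabla u_\varepsilon} \weakto \nabla u$ weakly in $L^p(Q)^d$; hence $\unf \nabla u_\varepsilon - \ex{\unf \nabla u_\varepsilon} \weakto \chi$ weakly in $\ltp^d$. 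Convexity of $V(\omega,x,\cdot)$ and of $|\cdot|^p$ together with weak lower semicontinuity give
\begin{equation*}
\liminf_{\varepsilon\to 0}\mathcal{E}_{\varepsilon,\delta}(u_\varepsilon) \geq \ex{\int_Q V(\omega,x,\nabla u + \chi) + \delta |\chi|^p\,dx}.
\end{equation*}
A Fubini/slicing argument based on the definition of $L^p_{\pot}(\Omega)\otimes L^p(Q)$ as a closure of algebraic tensors shows that $\chi(\cdot, x) \in L^p_{\pot}(\Omega)$ for a.a.\ $x$, so the inner expectation pointwise dominates $V_{\hom,\delta}(x,\nabla u(x))$, and integration in $x$ produces $\mathcal{E}_{\hom,\delta}(u)$.

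\textbf{Recovery.} If $u \notin \sob_0(Q)$ the target is $+\infty$ and any sequence works. Otherwise let $\chi_F \in L^p_{\pot}(\Omega)$ denote the unique minimizer of the corrector problem defining $V_{\hom,\delta}(x,F)$. The strict convexity induced by the $\delta|\cdot|^p$-penalty gives continuous dependence of $\chi_F$ on $F$, so that $\chi(\omega,x) := \chi_{\nabla u(x)}(\omega)$ is a bona fide element of $L^p_{\pot}(\Omega)\otimes L^p(Q)$. Proposition~\ref{prop:292}(iii) now produces $u_\varepsilon \in L^p(\Omega)\otimes\sob_0(Q)$ with $u_\varepsilon \st u$ and $\nabla u_\varepsilon \st \nabla u + \chi$; strong $L^p$-convergence of $\unf \nabla u_\varepsilon$ to $\nabla u + \chi$ implies strong convergence of $\unf \nabla u_\varepsilon - \ex{\unf \nabla u_\varepsilon}$ to $\chi$, and the growth condition \ref{ass:3b} together with dominated convergence yields $\mathcal{E}_{\varepsilon,\delta}(u_\varepsilon) \to \mathcal{E}_{\hom,\delta}(u)$. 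The main obstacle is the bookkeeping of the variance term under $\unf$: one must verify that subtracting the deterministic mean commutes with the isometry and that ergodicity is what forces the weak limit of $\ex{\unf \nabla u_\varepsilon}$ to be $\nabla u$ rather than $\nabla u + \ex{\chi}$; a secondary but not entirely routine point is the measurable selection $x \mapsto \chi_{\nabla u(x)}$ needed to realize the pointwise correctors as a single element of $L^p_{\pot}(\Omega)\otimes L^p(Q)$.
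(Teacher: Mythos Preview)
Your proposal is correct and follows essentially the same route as the paper. The paper's own proof is a two-line pointer---``analogous to Theorem~\ref{thm1} (with Lemma~\ref{lemma:940} supplying coercivity) and Remark~\ref{remark:1099}''---and you have simply unpacked the details that this pointer hides: the transformation of the variance term under $\unf$ via $\ex{\unf f}=\ex{f}$, the fact that $\ex{\chi}=0$ for $\chi\in L^p_{\pot}(\Omega)$ so that the centered unfolded gradients converge weakly to $\chi$, and the measurable selection of the corrector $x\mapsto\chi_{\nabla u(x)}$ (the paper defers this last point to a reference in Section~\ref{Section:4:3}, while here the $\delta|\cdot|^p$-term makes the corrector problem strictly convex and the selection straightforward).
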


\begin{proof}
(i) The statement follows analogously to the proof of Theorem \ref{thm1} (i).

(ii) Let $\unf u_{\varepsilon}\to u$ weakly in $L^1(\Omega\times Q)$. We may assume without loss of generality that $\limsup_{\varepsilon \to 0}\mathcal{E}_{\varepsilon,\delta}(u_{\varepsilon})<\infty$. In this case, Lemma \ref{lemma:940} implies that $u\e$ is bounded in $L^p(\Omega)\otimes W^{1,p}_0(Q)$. We may proceed analogously to Theorem \ref{thm1} and Remark \ref{remark:1099} to obtain
\begin{equation*}
\liminf_{\varepsilon\to 0}\mathcal{E}_{\varepsilon,\delta}(u_{\varepsilon}) \geq \mathcal{E}_{\hom, \delta}(u).
\end{equation*}
(ii) This part is analogous to Theorem \ref{thm1} and Remark \ref{remark:1099}.
\end{proof}
The results of Lemmas \eqref{lemma:1054} and \eqref{lemma:1124}, Theorem \eqref{theorem:1149} and \cite{NSS,HoppePhd, HNS} can be summarized in the following commutative diagram:
\begin{equation*}
  \begin{array}{ccc}
   \qquad\quad \mathcal E_{\varepsilon,\delta} & \stackrel{(\delta \to 0)}{\to}& \mathcal E_{\varepsilon}\qquad \quad \\
    \scaleto{(\varepsilon \to 0)}{8pt} \downarrow   &  &\downarrow \scaleto{(\varepsilon \to 0)}{8pt}\\
    \qquad\quad \mathcal E_{\hom,\delta}&\stackrel{(\delta \to 0)}{\to}&\mathcal E_{\hom}\qquad \quad
  \end{array}
\end{equation*}
The arrows denote Mosco convergence in the corresponding convergence regimes.
\subsection{Quenched homogenization of convex functionals}\label{Section:4:3}
In this section we demonstrate how to lift homogenization results w.r.t.~two-scale convergence in the mean to quenched statements at the example of Section \ref{sec:nonerg}. Throughout this section we assume that $\ex{\cdot}$ is ergodic. For $\omega\in\Omega$ we define $\cE^{\omega}\e: W^{1,p}_0(Q)\to \re{}$, 
\begin{equation*}
\cE^{\omega}_{\eps}(u):=\int_{Q}V\left(\tau_{\frac{x}{\eps}}\omega, x,\nabla u(x)\right)\,dx,
\end{equation*}
with $V$ satisfying \ref{ass:1}-\ref{ass:3}. The goal of this section is to relate two-scale limits of ``mean''-minimizers, i.e.~functions $u\e\in L^p(\Omega)\otimes W^{1,p}_0(Q)$ that minimize $\cE_{\eps}$, with limits of ``quenched''-minimizers, i.e.~families  $\{u\e(\omega)\}_{\omega\in\Omega}$ of minimizers to $\cE^{\omega}\e$ in $W^{1,p}_0(Q)$. We also remark that if $V(\omega,x,\cdot)$ is strictly convex $u\e$ and $\cb{u\e(\omega)}_{\omega\in\Omega}$ may be identified since minimizers of both functionals $\mathcal{E}\e$ and $\mathcal{E}\e^{\omega}$ are unique. 

Before presenting the main result of this section, we remark that in the ergodic case, the limit functional \eqref{energy_hom} reduces to a single-scale energy
\begin{equation*}
 \mathcal{E}_{\hom}:\sob_0(Q) \rightarrow \re{}, \quad
 \mathcal{E}_{\hom}(u)=\int_Q V_{\hom}(x,\nabla u(x))dx,
\end{equation*}
where the homogenized integrand $V_{\hom}$ is given for $x\in \R^d$ and $F\in \R^{d}$ by 
\begin{align}\label{equation}
V_{\hom}(x,F)=\inf_{\chi\in L^p_{\pot}(\Omega)}\ex{V(\omega, x,F+\chi(\omega))}.
\end{align}
In particular, we may obtain an analogous statement to Theorem \ref{thm1} where we replace $\mathcal{E}_0$ with $\mathcal{E}_{\mathrm{hom}}$. The proof of this follows analogously with the only difference that in the construction of the recovery sequence we first need to find $\chi$ such that $\mathcal{E}_0(u,\chi)=\mathcal{E}_{\mathrm{hom}}(u)$. This is done by a usual measurable selection argument, cf. \cite[Theorem 7.6]{varga2019stochastic}.  

\begin{thm}
  \label{thm:Quenched-hom-convex-grad} Let $p\in (1,\infty)$, $Q\subset \R^d$ be open and bounded, and $\ex{\cdot}$ be ergodic. Assume \ref{ass:1}-\ref{ass:3}. Let $u\e\in L^{p}(\Omega)\otimes W_{0}^{1,p}(Q)$
  be a minimizer of $\cE_{\eps}$. Then there exists a subsequence such that $(u\e,\nabla u\e)$ generates a Young measure $\boldsymbol{\nu}$ in $\sB:=(\sB^p)^{1+d}$ in the sense of Theorem~\ref{thm:Balder-Thm-two-scale}, and for $P$-a.a.~$\omega\in\Omega$, $\nu_{\omega}$ concentrates on the set $    \big\{\,(u,\nabla u+\chi)\,:\,\cE_0(u,\chi)=\min\cE_0\,\big\}$ of minimizers of the limit functional.
  Moreover, if  $V(\omega,x,\cdot)$ is strictly convex for all $x\in Q$ and $P$-a.a.~$\omega\in\Omega$, then the minimizer $u\e$ of $\cE_{\eps}$ and the minimizer $(u,\chi)$ of $\cE_0$ are  unique, and for $P$-a.a.~$\omega\in\Omega$ we have (for a not relabeled subsequence)
  \begin{gather*}
    u\e(\omega,\cdot)\weakto u\text{ weakly in }W^{1,p}(Q),\qquad u\e(\omega,\cdot)\tsq{\omega}u,\qquad\nabla u\e(\omega,\cdot)\tsq{\omega}\nabla u+\chi,\\
    \text{and }\min\cE^\omega_\eps=\cE^\omega_\eps(u\e(\omega,\cdot))\to \cE_0(u,\chi)=\min\cE_0.
  \end{gather*}
\end{thm}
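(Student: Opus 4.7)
The plan is to reduce the quenched statement to the Young measure machinery of Theorem~\ref{thm:Balder-Thm-two-scale} applied to the minimizing sequence $(u_\varepsilon, \nabla u_\varepsilon)$, and then exploit convexity to show that the generated Young measure concentrates on minimizers of $\cE_0$. First, by \ref{ass:3} and Poincar\'e's inequality, the bound $\cE_\varepsilon(u_\varepsilon)\le \cE_\varepsilon(0)\le C|Q|$ makes $u_\varepsilon$ bounded in $L^p(\Omega)\otimes W^{1,p}_0(Q)$. I would apply a straightforward vectorial extension of Theorem~\ref{thm:Balder-Thm-two-scale} to $(u_\varepsilon,\nabla u_\varepsilon)$ viewed as a bounded sequence in $\sB:=(\sB^p)^{1+d}$ to extract a subsequence and produce a Young measure $\boldsymbol{\nu}$ on $\sB$. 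Lemma~\ref{lem:sto-conver-grad} together with preservation of zero boundary values under weak convergence then identifies every quenched $\omega$-two-scale cluster point of $(u_\varepsilon(\omega,\cdot),\nabla u_\varepsilon(\omega,\cdot))$ with a pair $(u,\nabla u+\chi)$ where $u\in W^{1,p}_0(Q)$ and $\chi\in L^p_{\pot}(\Omega)\otimes L^p(Q)$, so $\nu_\omega$ concentrates on this structured set for $P$-a.a.~$\omega$.

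Next I would establish $\min\cE_\varepsilon\to\min\cE_0$: the upper bound follows from Theorem~\ref{thm1}(ii) applied to any minimizer $(u,\chi)$ of $\cE_0$ via the comparison $\cE_\varepsilon(u_\varepsilon)\le \cE_\varepsilon(\tilde u_\varepsilon)\to\min\cE_0$ with a recovery sequence, while the lower bound follows from Theorem~\ref{thm1}(i) applied to any subsequential mean two-scale limit. Applying the (vectorial extension of) Lemma~\ref{lem:Balder-Lem-two-scale} to the convex integrand $V$ acting on the gradient coordinate then yields
\begin{equation*}
\min\cE_0 \;=\; \lim_{\varepsilon\to 0}\cE_\varepsilon(u_\varepsilon) \;\ge\; \int_\Omega\int_\sB \cE_0(v_1,\chi_v)\,d\nu_\omega(v)\,dP(\omega),
\end{equation*}
where the decomposition $v=(v_1,\nabla v_1+\chi_v)$ is valid $\nu_\omega$-a.e.~by the first paragraph and the uniform integrability required in Lemma~\ref{lem:Balder-Lem-two-scale} is a direct consequence of the lower bound in \ref{ass:3}. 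Since $\cE_0(v_1,\chi_v)\ge \min\cE_0$ and $\nu_\omega$ is a probability measure, equality must hold $\nu_\omega$-a.e., so $\nu_\omega$ concentrates on the set of minimizers of $\cE_0$, settling the first assertion.

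For the strictly convex case, uniqueness of both the mean minimizer $u_\varepsilon$ of $\cE_\varepsilon$ and the two-scale minimizer $(u,\chi)$ of $\cE_0$ is standard. A measurable selection argument combined with the identification $L^p(\Omega)\otimes W^{1,p}_0(Q)=L^p(\Omega;W^{1,p}_0(Q))$ shows that $u_\varepsilon(\omega,\cdot)$ is the unique minimizer of $\cE^\omega_\varepsilon$ for $P$-a.a.~$\omega$, so $\cE^\omega_\varepsilon(u_\varepsilon(\omega,\cdot))=\min\cE^\omega_\varepsilon$. Uniqueness of the limit minimizer forces $\nu_\omega=\delta_{(u,\nabla u+\chi)}$ for $P$-a.a.~$\omega$, so by the last part of Theorem~\ref{thm:Balder-Thm-two-scale} a further subsequence satisfies the quenched convergences $u_\varepsilon(\omega,\cdot)\tsq{\omega}u$ and $\nabla u_\varepsilon(\omega,\cdot)\tsq{\omega}\nabla u+\chi$ for $P$-a.a.~$\omega$. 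Testing against $\mathbf{1}_\Omega\varphi_Q$ with $\varphi_Q\in\sD_Q$ and using $\ex{\chi}=0$ then yields $u_\varepsilon(\omega,\cdot)\weakto u$ and $\nabla u_\varepsilon(\omega,\cdot)\weakto\nabla u$ weakly in $L^p(Q)$, which together with the $W^{1,p}_0(Q)$-boundedness built into quenched two-scale convergence gives weak $W^{1,p}(Q)$-convergence.

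The remaining pointwise convergence of the energies is the main technical obstacle. Remark~\ref{lem:General-Hom-Convex} combined with the quenched convergence of $\nabla u_\varepsilon(\omega,\cdot)$ yields the pointwise liminf inequality $\liminf_{\varepsilon\to 0}\cE^\omega_\varepsilon(u_\varepsilon(\omega,\cdot))\ge \cE_0(u,\chi)=\min\cE_0$ for $P$-a.a.~$\omega$, and we also have $\int_\Omega\cE^\omega_\varepsilon(u_\varepsilon(\omega,\cdot))\,dP(\omega)=\cE_\varepsilon(u_\varepsilon)\to\min\cE_0$ from the second paragraph. The hard part is promoting the pointwise liminf bound to a pointwise limit; I would handle this via a reverse Fatou argument applied to $g_\varepsilon(\omega):=\cE^\omega_\varepsilon(u_\varepsilon(\omega,\cdot))-\min\cE_0$: the lower bound in \ref{ass:3} controls the negative part $g_\varepsilon^-$ uniformly, so the pointwise liminf inequality together with dominated convergence yields $\int_\Omega g_\varepsilon^-\,dP\to 0$; since $\int_\Omega g_\varepsilon\,dP\to 0$ as well, we obtain $\int_\Omega g_\varepsilon^+\,dP\to 0$, and a further (not relabeled) subsequence then satisfies $g_\varepsilon^+(\omega)\to 0$ for $P$-a.a.~$\omega$, giving the claimed $\cE^\omega_\varepsilon(u_\varepsilon(\omega,\cdot))\to\cE_0(u,\chi)$.
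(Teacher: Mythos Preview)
Your proposal is correct and follows essentially the same route as the paper's proof: both combine the mean homogenization result (Theorem~\ref{thm1} and Remark~\ref{C:thm1}) with the Young measure machinery (Theorem~\ref{thm:Balder-Thm-two-scale} and Lemma~\ref{lem:Balder-Lem-two-scale}) to show concentration on minimizers, and then use the last part of Theorem~\ref{thm:Balder-Thm-two-scale} together with Remark~\ref{lem:General-Hom-Convex} in the strictly convex case. Your argument is in fact more explicit than the paper's in two places: you spell out the identification of $u_\varepsilon(\omega,\cdot)$ as the minimizer of $\cE^\omega_\varepsilon$ and the weak $W^{1,p}$-convergence (the paper leaves the latter implicit via Lemma~\ref{lem:two-scale-limit}), and you give a concrete reverse Fatou argument for the pointwise energy convergence where the paper only writes ``by a standard argument''; your version of that step is correct and requires one more subsequence extraction, which is harmless since the statement only claims convergence along some subsequence.
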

\begin{remark}[Identification of quenched two-scale cluster points]
  If we combine Theorem~\ref{thm:Quenched-hom-convex-grad} with the identification of the support of the Young measure in Theorem~\ref{thm:Balder-Thm-two-scale} we conclude the following: There exists a subsequence such that
$(u\e,\nabla u\e)$ two-scale converges in the mean to a limit of the form $(u_0,\nabla u_0+\chi_0)$ with $\cE_0(u_0,\chi_0)=\min\cE_0$, and for a.a.~$\omega\in\Omega$ the set of quenched $\omega$-two-scale cluster points $\CL(\omega, (u\e(\omega,\cdot),\nabla u\e(\omega,\cdot)))$ is contained in $\big\{\,(u,\nabla u+\chi)\,:\,\cE_0(u,\chi)=\min\cE_0\,\big\}$. In the strictly convex case we further obtain that $\CL(\omega, (u\e(\omega,\cdot),\nabla u\e(\omega,\cdot)))=\{(u,\nabla u+\chi)\}$ where $(u,\chi)$ is the unique minimizer to $\cE_0$. Note, however, that our argument (that extracts quenched two-scale limits from the sequence of ``mean'' minimizers) involves an exceptional $P$-null-set that a priori depends on the selected subsequence. This is in contrast to the classical result in \cite{DalMaso1986} which is based on a subadditive ergodic theorem and states that there exists a set of full measure $\Omega'$ such that for all $\omega\in\Omega'$ the minimizer $u\e^\omega$ to $\cE^{\omega}_\eps$ weakly converges in $W^{1,p}(Q)$ to the deterministic minimizer $u$ of the reduced functional $\cE_{\hom}$ for any sequence $\eps\to 0$.
\end{remark}
In the proof of Theorem~\ref{thm:Quenched-hom-convex-grad} we combine homogenization in the mean in form of Theorem~\ref{thm1}, the connection to quenched two-scale limits via Young measures in form of Theorem~\ref{thm:Balder-Thm-two-scale}, and a recent result described in Remark~\ref{lem:General-Hom-Convex} by Nesenenko and the first author.
\begin{proof}[Proof of Theorem~\ref{thm:Quenched-hom-convex-grad}]  
  {\it Step 1. (Identification of the support of $\boldsymbol{\nu}$).}
  
  Since $u\e$ is a sequence of minimizers, by Corollary~\ref{C:thm1} there exists a subsequence (not relabeled) and minimizers $(u,\chi)\in W^{1,p}_0(Q)\times (L^p_{\pot}(\Omega)\otimes L^p(Q))$ of $\cE_0$ such that
  that $u\e \wt u \text{ in }\ltp$, $\nabla u\e \wt \nabla u+\chi  \text{ in }\ltp^{d}$, and 
  \begin{equation}\label{eq:conv-minima}
    \lim\limits_{\eps\to 0}\min\cE\e=    \lim\limits_{\eps\to 0}\cE\e(u\e)=\cE_0(u,\chi)=\min\cE_0.
  \end{equation}
  In particular, the sequence $(u\e,\nabla u\e)$ is bounded in $\sB$.  By Theorem~\ref{thm:Balder-Thm-two-scale} we may pass to a further subsequence (not relabeled) such that $(u\e,\nabla u\e)$ generates a Young measure $\boldsymbol{\nu}$ on $\sB$.   Since $\nu_\omega$ is supported on the set of quenched $\omega$-two-scale cluster points of $(u\e(\omega,\cdot),\nabla u\e(\omega,\cdot))$, we deduce from Lemma~\ref{lem:sto-conver-grad} that the support of $\nu_\omega$ is contained in $\sB_0:=\{\xi=(\xi_1,\xi_2)=(u',\nabla u'+\chi')\,:\,u'\in W^{1,p}_0(Q),\,\chi\in L^p_{\pot}(\Omega)\otimes L^p(Q)\}$ which is a closed subspace of $\sB$. Moreover, thanks to the relation of the generated Young measure and stochastic two-scale convergence in the mean, we have $(u,\chi)=\int_\Omega \int_{\sB_0}(\xi_1,\xi_2-\nabla\xi_1)\,\nu_\omega(d\xi)\,dP(\omega)$. Furthermore, Lemma~\ref{lem:Balder-Lem-two-scale} implies that
  \begin{equation*}
    \lim\limits_{\eps\to 0}\cE_\eps(u\e)\geq \int_\Omega\int_{\sB}\Big(\int_\Omega\int_Q V(\tomega,x,\xi_2)\,dx\,dP(\tomega)\Big)\,\nu_\omega(d\xi)\,dP(\omega).
  \end{equation*}
  In view of \eqref{eq:conv-minima} and the fact that $\nu_\omega$ is supported in $\sB_0$, we conclude that
  \begin{equation*}
    \min\cE_0\geq \int_\Omega\int_{\sB_0}\cE_0(\xi_1,\xi_2-\nabla\xi_1)\,\nu_\omega(d\xi)\,dP(\omega)\geq \min\cE_0\int_\Omega\int_{\sB_0}\nu_\omega(d\xi)dP(\omega).
  \end{equation*}
  Since $\int_\Omega\int_{\sB_0}\nu_\omega(d\xi)dP(\omega)=1$, we have $\int_\Omega\int_{\sB_0}|\cE_0(\xi_1,\xi_2-\nabla\xi_1)-\min\cE_0|\,\nu_\omega(d\xi)\,dP(\omega)= 0$, and thus we conclude that for $P$-a.a.~$\omega\in\Omega_0$,  $\nu_\omega$ concentrates on $\{(u,\nabla u+\chi)\,:\,\cE_0(u,\chi)=\min\cE_0\}$.
  \smallskip

  {\it Step 2. (The strictly convex case).}

  The uniqueness of $u\e$ and $(u,\chi)$ is clear. From Step~1 we thus conclude that $\nu_\omega=\delta_{\xi}$ where $\xi=(u,\nabla u+\chi)$. Theorem~\ref{thm:Balder-Thm-two-scale} implies that $(u\e(\omega,\cdot),\nabla u\e(\omega,\cdot))\tsq{\omega}(u,\nabla u+\chi)$ (for $P$-a.a.~$\omega\in\Omega$). 
  By Lemma~\ref{lem:Balder-Lem-two-scale} we have for $P$-a.a.~$\omega\in\Omega$,
  \begin{equation*}
    \liminf\limits_{\eps\to 0}\cE^\omega_\eps(u\e(\omega,\cdot))\geq \cE_0(u,\chi)=\min\cE_0.
  \end{equation*}
  On the other hand, since $u\e(\omega,\cdot)$ minimizes $\cE^\omega_\eps$, we deduce by a standard argument that for $P$-a.a.~$\omega\in\Omega$,
  \begin{equation*}
    \lim\limits_{\eps\to 0}\min\cE^\omega_\eps=\lim\limits_{\eps\to 0}\cE^\omega_\eps(u\e(\omega,\cdot))=\cE_0(u,\chi)=\min\cE_0.
  \end{equation*}
\end{proof}

\section*{Acknowledgments}

The authors thank Alexander Mielke for fruitful discussions and valuable comments. MH has been funded by Deutsche Forschungsgemeinschaft (DFG) through grant CRC 1114 ``Scaling Cascades
in Complex Systems'', Project C05 ``Effective models for materials and interfaces with multiple scales''. SN and MV acknowledge funding by the Deutsche Forschungsgemeinschaft (DFG, German
Research Foundation) – project number 405009441.

\bibliographystyle{abbrv}
\bibliography{ref}

\end{document}